\documentclass[12pt]{article}
\usepackage[utf8]{inputenc}
\textheight = 23.0cm
\textwidth = 16.0cm
\topmargin = -1.0cm
\oddsidemargin = 0cm
\baselineskip = 18 true pt
\usepackage{times}
\usepackage{array}
\usepackage[english]{babel}
\usepackage{bm,mathrsfs,amsmath,amssymb,amsthm,color,comment}
\usepackage{amsfonts,commath,mathtools,enumerate,enumitem,esint}
\input epsf

\usepackage{hyperref}
\hypersetup{urlcolor=blue, citecolor=blue, linkcolor=blue}

\usepackage{cite}

\usepackage[textsize=small]{todonotes}
\setlength{\marginparwidth}{2.5cm}

\definecolor{wineRed}{rgb}{0.7,0,0.3}
\definecolor{grandBleu}{rgb}{0,0,0.8}
\definecolor{darkGreen}{rgb}{0,0.5,0}
\definecolor{blueViolet}{rgb}{0.4,0,1.0}
\definecolor{bloodOrange}{rgb}{0.85,0.05,0}
\definecolor{mycolor}{rgb}{0.8,0,0.2}
\definecolor{}{rgb}{0.8,0,0.2}
\def\ryota #1{#1} 
\newcommand{\KSh}[1]{{\color{grandBleu}#1}}


\theoremstyle{definition}

\newtheorem{lemma}{Lemma}[section]

\newtheorem{cor}{Corollary}[section]

\newtheorem{thm}{Theorem}[section]

\newtheorem{remark}{Remark}[section]

 \newtheorem{con}[thm]{Conjecture}
 \newtheorem{lem}[thm]{Lemma}
 \newtheorem{prop}[thm]{Proposition}
 \theoremstyle{definition}
 \newtheorem{defn}[thm]{Definition}

  \def\bne{{\bf e}}
\def\di{\hbox{div}\,}

\def\Lip{\hbox{Lip}\,}
\def\bR{\mathbb{R}}

\def\d{\partial}

\def\cE{{\mathcal{E}}}
\def\cH{{\mathcal{H}}}

\def\dps{\displaystyle}
\def\txs{\textstyle}
\def\Sgn{\hbox{Sgn}}
\def\ds{\displaystyle}

\bibliographystyle{plain}

\numberwithin{equation}{section}

\begin{document}
\title{On boundary detachment phenomena for the total variation flow
  with 
  dynamic boundary conditions}
 
 \medskip
\author{Yoshikazu Giga${}^1$, Ryota Nakayashiki${}^2$, Piotr Rybka${}^3$, Ken Shirakawa${}^4$\\
${}^1$ Graduate School of Mathematical Sciences\\
University of Tokyo\\
Komaba 3-8-1,  Tokyo 153-8914,
Japan\\
e-mail: {\tt labgiga@ms.u-tokyo.ac.jp}
\medskip\\
${}^2$ Department of Mathematics and Informatics,\\ Graduate School of Science, Chiba University\\ Yayoi-cho 1-33, Inage-ku, Chiba, 263-8522, Japan\\
e-mail: {\tt nakayashiki1108@chiba-u.jp}
\medskip\\
${}^3$ Institute of Applied Mathematics and Mechanics,
Warsaw University\\ ul. Banacha 2, 07-097 Warsaw, Poland\\
fax: +(48 22) 554 4300, e-mail: {\tt rybka@mimuw.edu.pl}
\medskip\\
${}^4$ Department of Mathematics, Faculty of Education, Chiba University\\ Yayoi-cho 1-33, Inage-ku, Chiba, 263-8522, Japan\\
e-mail: {\tt sirakawa@faculty.chiba-u.jp}
}

\maketitle

\begin{abstract}
We combine the total variation flow suitable for crystal modeling and image analysis with the dynamic boundary conditions. We analyze the behavior of facets at the parts of the boundary where these conditions are imposed. We devote particular attention to the radially symmetric data. We observe that the boundary layer detachment actually can happen at concave parts of the boundary.
\end{abstract}

\bigskip\noindent
{\bf Key words:} \quad total variation flow, facet evolution, dynamic boundary conditions, boundary layer detachment

\bigskip\noindent
{\bf 2010 Mathematics Subject Classification.} Primary: 35K67, 
secondary: 35K65, 
35A21, 
35A15 

\section{Introduction}
We consider the total variation flow with the dynamic boundary condition, possibly mixed with the Neumann boundary condition, which can be formally written as follows,
\begin{equation}\label{eq1}
 \begin{array}{ll}
     u_t = \di \left({\ds\frac{\nabla u}{|\nabla u|}}\right) & \hbox{for }(x,t)\in \Omega\times (0,T) =:Q_T;\\[2.5ex]
     {\tau} v_t = - \frac{\nabla u}{|\nabla u|}  \cdot {\nu}& \hbox{for }(y,t)\in \Gamma\times (0,T) =:S_T;\\[1.5ex]
  \frac{\d u}{\d {\nu}} = 0& \hbox{for }(y,t)\in (\d\Omega\setminus\Gamma)\times (0,T);\\[1.5ex]
  u(x,0) = u_0(x) & \hbox{for }x \in \Omega;\\[1.5ex]
  v(y,0) = v_0(y)& \hbox{for }y\in \Gamma.
 \end{array}
\end{equation}
Here 
$ \Omega \subset \mathbb{R}^N $ is a bounded spatial domain of dimension $ N \in \mathbb{N} $, and when $ N > 1 $, the boundary $ \d\Omega $ is supposed to be sufficiently smooth.
Moreover, $\Gamma\subset \d\Omega$, possibly $\Gamma =\d\Omega$ is a part of the boundary with positive $\cH^{N-1}$-measure. The outer normal to $\d\Omega$ at $x$ is denoted by
$\nu(x)$. 

It is  well-known that the total variation flow leads to the creation of facets, i.e. persistent flat parts of solutions. Here, we study the interactions of facets  at their junction with the boundary at $\Gamma$, where  the dynamic boundary conditions are specified. 

Even though the total variation flow with the Dirichlet boundary conditions was studied by a number of authors, see \cite{mazon,MR2139257, 
bogelein,Matusik}, the details of the boundary behavior were not extensively discussed. In particular this applies to the evolution 
of facets touching the boundary. It is worth emphasizing that the authors of \cite{mazon,bogelein} invested a lot of effort into finding the correct notion of the solution. This is particularly true for \cite{bogelein}, where quite general time-dependent Dirichlet boundary data are considered. 

It is worth noticing that it is known, see \cite{andreu, MR2139257, Matusik},
that in general, the  Dirichlet boundary data
may not be attained in the sense of trace.

We are interested in a phenomenon, which was studied in \cite{oliker} for the Dirichlet problem of graphs evolving by the mean curvature. The authors showed there that a boundary layer may detach from the solution in the bulk. This phenomenon is attributed to the  lack of uniform parabolicity of the mean curvature flow for graphs. Obviously, this lack of uniform parabolicity occurs here too.

It is worth mentioning that the problem of the loss of the boundary conditions was studied by a number of authors in the context of viscosity solutions to parabolic equations with nonlinearities involving gradient of solution.
A good example of such research is \cite{quaas}, where also a historical account is presented. However, the nature of the phenomenon studied in \cite{quaas} is different from what we study here. In case of first order Hamilton-Jacobi
equations we refer the reader to \cite{Elliott-Giga-Goto} for earlier study on unattainability of the
boundary condition.

The non-attainment of boundary condition is a common problem for the steady states of the total variation flow. They are better known as solutions to the least gradient problem. Special geometric restrictions must be imposed on the domain $\Omega$ as well as on the boundary datum $f$ to ensure attainment, see \cite{sternberg, segura, grs}.

Here, 
we are observing  a similar phenomenon of the boundary layer detachment. We introduce a family of evolution problems with dynamic boundary condition indexed by parameter $\tau\in (0,\infty)$. 
By formally taking the limit as $\tau\to 0$ we recover the Neumann data, while the limit $\tau\to \infty$ yields the Dirichlet boundary conditions. However, a rigorous statement is outside the scope of this paper.

The study of the dynamic boundary conditions has a long history. In the early days, solvability of uniformly parabolic equations with dynamic boundary conditions was discussed by Escher, \cite{escher}.  
For fully nonlinear (possibly degenerate) parabolic equations,
Barles established a quite general comparison result in \cite[Sect. II]{barles1} and \cite[Sect. 3]{barles2} for a general nonlinear dynamic boundary condition. The mean curvature flow for a level set,
under a dynamic boundary condition is discussed in Giga-Hamamuki \cite{giga-hamamuki}, which is
not included in papers of Barles, \cite{barles1,barles2}.

For further development in case of uniformly parabolic
problems we refer to \cite{denk-pruss, vazquez, MR2434980}. More recently this topic was studied in \cite{MR3434333, GMS, MR3342513, MR3661429}. It is worth emphasizing that the dynamic boundary conditions were studied in relation to Stefan problem, cf. \cite{aiki}, Allen--Cahn type equations \cite{sprekels-wu,MR2434980,MR3661429,MR3434333} or Cahn--Hilliard equations, \cite{MR2397309,MR3342513,GMS}. 

Finally, we comment on the physical background of our system \eqref{eq1}. The dynamic boundary conditions kindred to this study are found in the previous works of Stefan problems, e.g. \cite{aiki,savare}, and in particular, our dynamic boundary condition can be characterized as a singular limit of transmitted parabolic problems studied in \cite{savare}. Meanwhile, the singular diffusion as in \eqref{eq1} is associated with a phase transition model of mesoscale, which was proposed by Visintin \cite[Chapter 6, page 176]{visintin}. In view of these, our system \eqref{eq1} can be regarded as a basic problem for a mesoscale phase transition model, that takes into account interactive phase-exchanges reproduced by the dynamic boundary condition.


Our goal in this paper is to study instances of occurrence of the ``boundary layer detachment
phenomenon'' 
in the case of the total variation flow
under the dynamic boundary condition on a part of the boundary called $\Gamma$. 
More precisely, we investigate the evolution of the persistent facets touching $\Gamma$, such facets are called {\it calibrable}.

Moreover, if the solution is continuous at points of $\Gamma$, i.e. the facet moves with the same velocity as the boundary value, then we call such a calibrable facet {\it coherent}.

If a facet does not touch the
boundary, its calibrability is well studied, especially when the facet is
convex as well as the solution. 
In fact, the calibrability of a facet $F$ is
equivalent to saying that $F$ is a Cheeger set, i.e. $F$  minimizes the Cheeger quotient,  
$\lambda=|\partial F|/|F|$, among all subsets. Moreover, it is the same as 
saying that the inward mean curvature of $F$, $\kappa$, is dominated by the Cheeger
quotient $\lambda$, 
see \cite{alter,BeCaNo}.

Under some technical conditions we show that 
facet  $F$ is
calibrable and coherent if the sum of inward principal curvature of
$\partial\Omega$ near the intersection of $F$ and $\Gamma$ is greater than $-1/\tau$.
In one dimensional case $N=1$, we show all facets are calibrable and coherent.
However, in $N=2$, if one considers annuli, the facet touching the inner circle
may not be coherent and boundary detachment phenomenon actually occurs. In order to
derive these results, we first clarify the definition of a solution by taking a
correct energy and show the well-posedness of the problem. We next calculate
canonical restriction of subdifferential of the energy. Although the general
strategy is similar to those in \cite{mazon,andreu}, it is nontrivial to implement the
strategy.

Let us describe the content of this paper. 
We present here a general existence result for (\ref{eq1}). For this purpose we use the nonlinear semigroup theory developed  by K\=omura \cite{Komura} and Br\'ezis \cite{MR0348562}. The main step in this direction is the identification of (\ref{eq1}) as a gradient flow of an energy functional $E$. It turns out that the natural definition of $E: L^2(\Omega) \times L^2(\Gamma) \to \bR\cup\{+\infty\}$ is as follows,
\begin{equation*}
 E(u,v) = \left\{
\begin{array}{ll}
    {\ds\int_\Omega |Du| + \int_\Gamma|\gamma u -v|\,d\cH^{N-1}}& \hbox{if } (u,v)\in BV(\Omega)\times L^2(\Gamma),\\[2ex]
    +\infty & \hbox{otherwise. }
\end{array}
 \right. 
\end{equation*}
In Section \ref{synergy}, we study the lower semi-continuity of $E$ and related problems, because this is the precondition of the  nonlinear semigroup theory. In Section \ref{sevolt}, we state and prove the existence of solutions to (\ref{eq1}). Here, our point of departure is the observation that (\ref{eq1}) is a gradient flow of $E$. In fact, if $\tau =1$, then (\ref{eq1}) is the  gradient flow of $E$ with respect to the standard inner product in $H= L^2(\Omega) \times L^2(\Gamma)$. We notice that (\ref{eq1}) may be viewed as a  
gradient flow of $E$ with respect to a non-standard inner product in $H$, given by formula $((u_1,v_1),(u_2,v_2))_\tau = \int_\Omega u_1 u_2\,dx + \tau \int_\Gamma v_1 v_2 \,d\cH^{N-1}$. We comment on this in Section \ref{sevolt}. 

We also take advantage of the structure of $E$ to notice the order preserving property of the flow and the comparison principle. This is also done in Section \ref{sevolt} and the analysis is based on the work by 
Br\'ezis \cite{brezis-thesis} and Kenmochi \cite{Kenmochi}.

A very important part of the analysis, which on the one hand is technical, on the other hand is necessary for the study of facet evolution is the  identification of the subdifferential, $\partial E$, and its canonical selection. This is performed in Section \ref{ssubd}. This section  closes with the remark on the relationship between the subdifferentials with respect to $(\cdot, \cdot)_\tau$ for different values of $\tau$. 

Section \ref{42CC} prepares the tools for the analysis of facets. In particular, we adjust the notion of calibrability to the present setting, when we pay particular attention to the behavior of facets, touching the boundary of $\d\Omega$ along $\Gamma$, where the dynamic boundary condition is set.

We also introduce the notion of  coherency, which is useful, when we wish to address the phenomenon of the boundary layer detachment. We also state there sufficient and necessary condition for calibrability or coherency.

We study a number of explicit examples, which show different types of behavior. Section \ref{sone} offers an analysis of a one dimensional problem as a warm-up. In this case no boundary layer detachment occurs. The radially symmetric two-dimensional problems are treated in Section \ref{stud}. We notice that a general Theorem \ref{MC} and its Corollary \ref{cor6.1} imply that if $\Gamma=\d\Omega$, where $\Omega$ is a ball, then radially symmetric facets touching $\Gamma$ will be coherent, i.e. no boundary detachment occurs. The situation is different, when we consider an annulus with inner radius $r_0$ and $\Gamma = \d B(0,r_0)$. In this case we pinpoint the situation of the boundary layer detachment.

\section{Preliminaries}

In this section, we begin with the basic notation used throughout this paper.

For an abstract Banach space $X$, we denote by $\|\cdot\|_{X}$ the norm of $X$, and when $X$ is a Hilbert space, we denote by $(\,\cdot\,,\cdot\,)_{X}$ the inner product of $X$. In particular, in cases of Euclidean spaces, we uniformly denote by $ |\cdot| $ the Euclidean norm, and we use ``\,$ \cdot $\,'' to denote the standard scalar product of two vectors.  
Additionally, for fixed dimensions $ d, \ell \in \mathbb{N} $ and a bounded open set $ U \subset \mathbb{R}^d $, we 
denote by $ \|\cdot\|_\infty $ the supremum-norm in $ L^\infty(U, \mathbb{R}^d) $, i.e. $ \| w \|_\infty := \mathrm{ess }\sup_{x \in U} |w(x)| $, for $ w \in L^\infty(U, \mathbb{R}^d) $.


For any proper functional $ \Psi : X \rightarrow(-\infty, \infty] $ on
a Hilbert
space $ X $, we denote by $ D(\Psi) $ the effective domain of $ \Psi $, i.e. $ D(\Psi) := \left\{ \begin{array}{l|l}
w \in X & \Psi(w) < \infty
\end{array} \right\} $.

For any proper lower semi-continuous (l.s.c., in short) and convex function $\Phi$ defined on a Hilbert space $X$, we denote the subdifferential of $\Phi$ by $\partial \Phi$. The subdifferential $\partial \Phi$ corresponds to a weak differential of $\Phi$, and in fact it is  a maximal monotone graph in the product space $ X \times X $. More precisely, for each $w_{0} \in {X}$, the value $\partial \Phi(w_{0})$ of the subdifferential at $w_{0}$ is defined as a set of all elements $\eta_0 \in {X}$ which satisfy the following variational inequality:
\begin{equation*}
(\eta_0, w-w_{0})_{{X}} \le \Phi(w) - \Phi(w_{0})\ \ \mbox{for any}\ w \in D(\Phi).
\end{equation*}
The set $D(\partial \Phi) := \left\{ \begin{array}{l|l} 
w \in {X} & \partial\Phi(w) \neq \emptyset\end{array} \right\}$ is called the domain of $\partial\Phi$. We often use the notation ``$ (w_{0},\eta_0) \in \partial\Phi $ in $ {X} \times {X} $\,'', to mean that ``$ \eta_0 \in \partial\Phi(w_{0})$ in ${X}$ with $w_{0} \in D(\partial\Phi)$'', by identifying the operator $\partial\Phi$ with its graph in ${X} \times {X}$.

\begin{remark}\label{Rem.conv}
An example of a subdifferential is the following set-valued sign function $ \Sgn^d :\mathbb{R}^d \to 2^{\mathbb{R}^d} $, given as:
\begin{equation*}
\omega \in \mathbb{R}^d \mapsto \Sgn^d(\omega):= \left\{ \begin{array}{l}
\displaystyle\frac{\omega}{|\omega|}, \mbox{ if $ \omega \ne 0 $,}
\\[3ex]
\bigl\{ \tilde{\omega} \in \mathbb{R}^d \,\bigl|\, |\tilde{\omega}| \leq 1 \bigr\}, \mbox{ if $ \omega = 0 $.}
\end{array}
\right.
\end{equation*}
It is easy to check that  the set-valued function $ \Sgn^d $ coincides with the subdifferential of the Euclidean norm $ |{}\cdot{}|: \omega \in \mathbb{R}^d \mapsto |\omega|=\sqrt{\omega \cdot \omega} \in [0,\infty) $. 
\end{remark}






\paragraph{Notations in $BV$-theory. (cf. \cite{MR1857292, MR2192832, MR3409135, MR0775682})}
For any $ d \in \mathbb{N} $, we denote the $ d$-dimensional Lebesgue measure by $ \mathcal{L}^d $. The measure theoretical phrases, such as ``a.e.'', ``$dt$'', ``$dx$'', etc are with respect to the Lebesgue measure in the corresponding dimension, unless specified otherwise.  


Let $ d\in \mathbb{N} $ be a fixed dimension and let $ U \subset \mathbb{R}^d $ be a bounded open set. We denote by $ \mathcal{M}(U) $ (resp. $ \mathcal{M}_{\rm loc}(U) $) the space of all finite Radon measures (resp. the space of all Radon measures) on $ U $. In general, the space $ \mathcal{M}(U) $ (resp. $ \mathcal{M}_{\rm loc}(U) $) is known as the dual of the Banach space $ C_0(U) $ (resp. dual of the locally convex space $ C_{\rm c}(U) $).
\medskip

A function $ u\in L^1(U) $ is called a $BV$-function (resp. $BV_\mathrm{loc}$-function) on $ U $ if and only if its distributional gradient $ Du$ is a finite Radon measure (resp. a Radon measure) on $ U $, namely $ Du \in \mathcal{M}(U, \mathbb{R}^d) $ (resp. $ Du \in \mathcal{M}_\mathrm{loc}(U, \mathbb{R}^d) $), and we denote by $ BV(U) $ (resp. $ BV_\mathrm{loc}(U) $) the space of all $BV$-functions (resp. $BV_\mathrm{loc}$-functions) on $ U $. For any $ u\in BV(U) $, the total variation measure $ |Du| \in \mathcal{M}(U) $ of the gradient $ Du $ is called \emph{the total variation measure of $ u $}, Then, by \cite[Proposition 3.6]{MR1857292}, we have, 
\begin{equation*}
|Du|(U)= \sup\left\{\begin{array}{l|l}\displaystyle\int_U u\,\di  \varphi \,dx & \parbox{6cm}{$ \varphi \in C_c^1(U, \mathbb{R}^d) $ \ and \ $ |\varphi| \leq 1$ on $ U $}\end{array}\right\},
\end{equation*}
and we also write 
$ \int_U |Du| $ for $|Du|(U)$. 
\medskip

As a function space, $ BV(U) $ is a Banach space, endowed with the norm:
\begin{equation*}
\|u\|_{BV(U)}:= \|u\|_{L^1(U)} + |Du|(U), \mbox{ for any $ u\in BV(U)$}.
\end{equation*}

For any $ u \in BV(U) $, we denote by $ {D u^a} $ (respectively, $ \ryota{D u^s} $), the absolutely continuous part (respectively, the singular part of $ Du $) with respect to $ \mathcal{L}^d $. 
Consequently, one can observe that:
\begin{equation*}
Du =\ryota{D u^a} +\ryota{D u^s} = \nabla u \, \mathcal{L}^d +{\textstyle \frac{Du^s}{|Du^s|}} \ryota{|D u^s|}\quad \mbox{ in $ \mathcal{M}(U) $.}
\end{equation*}
Here, $ \frac{Du^s}{|Du^s|} $ denotes the Radon--Nikod\'ym derivative of $ Du^s $ with respect to the total variation measure $ |Du^s| $, and $ \nabla u $ is the approximate differential of $ u \in BV(U) $ (cf. \cite[Definition 3.70]{MR1857292}).

There exists a unique linear operator $\gamma_{\partial U} :BV(U) \to L^1(\partial U)$, called \emph{trace operator} such that $ \gamma_{\partial U} \varphi = \varphi |_{\partial U}$ on $\partial U$ for any $u\in C^1(\bar{U})$. 



\paragraph{Notations for the variational analysis (cf. \cite{MR0750538}).}

Throughout this paper, let $ N \in \mathbb{N} $ be a fixed dimension, let $ \Omega \subset \mathbb{R}^N $ be a bounded domain, and let $ \Gamma \subset \mathbb{R}^N $ be a subset of the boundary $ \partial \Omega $ which possibly coincides with the whole $ \partial \Omega $. Also, we assume that the boundary $ \partial \Omega $ has a smoothness of $ C^1 $-class, and we simply denote by $ \nu : \partial \Omega \to \mathbb{S}^{N -1} $ 
the unit outer normal on $ \partial \Omega $, when $ N > 1 $
and $ \gamma : BV(\Omega) \to L^1(\partial \Omega) $  is the trace onto $ \partial \Omega $. On this basis, we define:
\begin{equation}\label{def-Xp}
\begin{array}{c}
L_\mathrm{div}^p(\Omega, \mathbb{R}^N) := \left\{ \begin{array}{r|l}
\omega \in L^p(\Omega, \mathbb{R}^N) & \di \omega \in L^p(\Omega)
\end{array} \right\},\nonumber
\\[1ex]
\mbox{and \ } X_p(\Omega) := L_\mathrm{div}^p(\Omega, \mathbb{R}^N) \cap L^\infty(\Omega, \mathbb{R}^N), 
\mbox{ for any $ 1 \leq p < \infty $.}
\end{array}
\end{equation}
Also, referring to the general theory for $BV$-functions, e.g. \cite[Sections 1--2]{MR0750538}, we recall the following facts.

There exists a bounded linear operator $ \ryota{[({}\cdot{}) \cdot \nu]} : X_2(\Omega) \to L^\infty(\partial \Omega)$, such that 
\begin{equation}\label{f1-1}
\left\{\parbox{10cm}{
 $ \bigl\|\ryota{[\omega \cdot \nu]}\bigr\|_{\infty} \le \| \omega \|_{\infty} $ for any  $\omega \in X_2(\Omega) $,
\\[1ex]
$\ryota{[\tilde{\omega} \cdot \nu]} = \tilde{\omega} \cdot \nu $ on $\partial \Omega$, if $ \tilde{\omega} \in C^1(\bar{\Omega}, \mathbb{R}^N)$.
}\right.
\end{equation}
Besides, for every $ \omega \in X_2(\Omega) $ and $ u\in BV(\Omega) \cap L^2(\Omega)$, there exists a finite Radon measure $(\omega,Du) \in \mathcal{M}(\Omega) $, such that $ (\omega, Du) $ is absolutely continuous for $ |Du| $,   
\begin{equation}\label{f1-2}
    \bigl| {\textstyle \frac{(\omega, Du)}{|Du|}} \bigr| \leq \| \omega \|_{\infty}, \mbox{ $ |Du| $-a.e. in $ \Omega $,}
\end{equation}
and
\begin{equation}\label{f1-3}
\int_\Omega (\omega,Du) = -\int_\Omega \di \omega \, u\,dx + \int_{\partial \Omega} \ryota{[\omega \cdot \nu]} \, \gamma u \,d\mathcal{H}^{N -1}.
\end{equation}
Moreover, for the absolutely continuous part $\ryota{(\omega, D u)^a} $ of $ (\omega, Du) $ for $ \mathcal{L}^N $ and the singular part $ \ryota{(\omega, D u)^s} $, it follows that:
\begin{equation}\label{f1-4}
(\omega, Du) = \ryota{(\omega, D u)^a} +\ryota{(\omega, D u)^s} = \omega \cdot \nabla u \, \mathcal{L}^N +{\textstyle \frac{(\omega, Du)}{|Du|}} \, \ryota{|D u^s|} \mbox{ in $ \mathcal{M}(\Omega) $.}
\end{equation}


\section{Energy and its lower semi-continuity}\label{synergy}
We want to write (\ref{eq1}) as a gradient flow for a suitable energy functional $E$. We choose the following Hilbert space $H = L^2(\Omega) \times L^2(\Gamma)$ with the standard inner product, 
$$
((u_1, v_1),(u_2,v_2)) = (u_1, u_2)_\Omega + (v_1, v_2)_\Gamma,
$$
where we write $(u_1, u_2)_\Omega =\int_\Omega u_1 u_2\,dx$ and $(v_1, v_2)_\Gamma =\int_\Gamma v_1 v_2\, d\cH^{N-1}$.
We define a functional $ E : H \longrightarrow [0, \infty] $, by setting:
\begin{equation}\label{2dfE}
 E(u,v) = \left\{
\begin{array}{ll}
    {\ds\int_\Omega |Du| + \int_\Gamma|\gamma u -v|\,d\cH^{N-1}}& \hbox{if } (u,v)\in BV(\Omega)\times L^2(\Gamma),\\[2ex]
    +\infty & \hbox{otherwise. }
\end{array}
 \right. 
\end{equation}
\begin{remark}
We could consider a more general, one-homogeneous function $\Phi$ in place of $|\cdot|$ above. However, this would create another layer of difficulty obscuring the main issue. On the other hand feasibility of such approach is suggested by Moll's paper \cite{MR2139257}. 
\end{remark}

The first step is 
to show that, $E$ defined above, is lower semi-continuous in the $L^2$ topology.

\begin{prop}\label{Prop3.1}
 Let us suppose that $u\in BV(\Omega)$, $v\in L^2(\Gamma)$ and $\{(u_n,v_n)\}_{n=0}^\infty\subset H$ is any sequence converging to $(u,v)$ in $H$. Then,
 $$
 \varliminf_{n\to\infty} E(u_n,v_n) \ge E(u,v).
 $$
\end{prop}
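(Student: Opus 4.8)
The plan is to exhibit $E$ as a supremum of $H$-continuous affine functionals, so that its lower semi-continuity becomes automatic. For this I would exploit the Anzellotti pairing and the normal trace $[\,\cdot\,\cdot\,\nu]$ recalled in \eqref{f1-1}--\eqref{f1-4}. Let $\mathcal{C}$ denote the set of all $\omega\in X_2(\Omega)$ with $\|\omega\|_\infty\le 1$ and $[\omega\cdot\nu]=0$ on $\d\Omega\setminus\Gamma$, and for $\omega\in\mathcal{C}$ define
\[
F_\omega(w,z):=-\int_\Omega \di\omega\,w\,dx+\int_\Gamma[\omega\cdot\nu]\,z\,d\cH^{N-1},\qquad (w,z)\in H.
\]
Since $\di\omega\in L^2(\Omega)$ and the restriction of $[\omega\cdot\nu]$ to $\Gamma$ lies in $L^\infty(\Gamma)\subset L^2(\Gamma)$, each $F_\omega$ is continuous on $H=L^2(\Omega)\times L^2(\Gamma)$. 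If one shows $E=\sup_{\omega\in\mathcal{C}}F_\omega$, then for every fixed $\omega\in\mathcal{C}$ and every $(u_n,v_n)\to(u,v)$ in $H$ one has $E(u_n,v_n)\ge F_\omega(u_n,v_n)$, whence $\varliminf_n E(u_n,v_n)\ge\lim_n F_\omega(u_n,v_n)=F_\omega(u,v)$; taking the supremum over $\omega\in\mathcal{C}$ gives $\varliminf_n E(u_n,v_n)\ge E(u,v)$, which is the assertion.

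The upper bound $F_\omega\le E$ is the routine half. If $w\notin BV(\Omega)$ there is nothing to prove; otherwise I would apply the integration-by-parts identity \eqref{f1-3} to $\omega$ and $w$, noting that the vanishing of $[\omega\cdot\nu]$ on $\d\Omega\setminus\Gamma$ collapses the boundary integral to $\int_\Gamma[\omega\cdot\nu]\,\gamma w\,d\cH^{N-1}$, whence
\[
F_\omega(w,z)=\int_\Omega(\omega,Dw)-\int_\Gamma[\omega\cdot\nu]\,(\gamma w-z)\,d\cH^{N-1}.
\]
By \eqref{f1-2} and $\|\omega\|_\infty\le 1$ the first term is $\le\int_\Omega|Dw|$, and since $\|[\omega\cdot\nu]\|_\infty\le\|\omega\|_\infty\le 1$ by \eqref{f1-1} the second term is $\le\int_\Gamma|\gamma w-z|\,d\cH^{N-1}$; adding these yields $F_\omega(w,z)\le E(w,z)$ for every $(w,z)\in H$. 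In particular $\sup_{\omega\in\mathcal{C}}F_\omega\le E$, and a supremum of continuous functionals is lower semi-continuous, so the burden is entirely shifted to the reverse inequality.

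The genuine obstacle is thus $\sup_{\omega\in\mathcal{C}}F_\omega(u,v)\ge E(u,v)$, i.e. producing, for the given $(u,v)$ and any $\varepsilon>0$, an admissible $\omega$ that nearly saturates both estimates above. Such an $\omega$ must simultaneously approximate the Radon--Nikod\'ym derivative $Du/|Du|$ in the bulk, so that $\int_\Omega(\omega,Du)\ge\int_\Omega|Du|-\varepsilon$, and carry the prescribed normal trace $[\omega\cdot\nu]\approx-\Sgn(\gamma u-v)$ on $\Gamma$, so that $-\int_\Gamma[\omega\cdot\nu](\gamma u-v)\,d\cH^{N-1}\ge\int_\Gamma|\gamma u-v|\,d\cH^{N-1}-\varepsilon$, all the while respecting $\|\omega\|_\infty\le 1$ and $[\omega\cdot\nu]=0$ off $\Gamma$. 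The delicate point is that $\omega$ is constrained both against the measure $Du$ in the interior and through its normal trace on the boundary, and the two requirements must be reconciled. I would carry this out by mollifying $Du/|Du|$ in the interior, modifying the field inside a thin inner collar of $\Gamma$ where the desired boundary datum is installed by solving an auxiliary divergence equation, and truncating to preserve the sup-norm bound; this Anzellotti-type representation is the only genuinely technical step. A further care is that $|Du|$ may concentrate near $\d\Omega$, so the collar should be chosen $|Du|$-thin rather than merely geometrically thin, in order to keep the interior error below $\varepsilon$.
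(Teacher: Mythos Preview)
Your approach is correct in outline and genuinely different from the paper's. The paper does not use duality at all: it fixes $v$, extends it to a $W^{1,1}$ function $\tilde\phi$ on an auxiliary domain $\tilde\Omega\supset\Omega$ built so that $\d\tilde\Omega\cap\d\Omega=\d\Omega\setminus\Gamma$, and observes that for $u$ extended by $\tilde\phi$ one has
\[
\int_{\tilde\Omega}|Du|=\int_\Omega|Du|+\int_\Gamma|\gamma u-v|\,d\cH^{N-1}+\int_{\tilde\Omega\setminus\bar\Omega}|\nabla\tilde\phi|\,dx.
\]
Lower semi-continuity of $E_v(\cdot):=E(\cdot,v)$ then follows from the classical l.s.c.\ of the total variation on $\tilde\Omega$; the passage from $E_v$ to the full $E$ is the one-line triangle inequality $|\gamma u_n-v_n|+|v_n-v|\ge|\gamma u_n-v|$. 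This Giaquinta--Modica--Sou\v cek device dispatches the proposition in a few lines.

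Your route instead writes $E$ as a supremum of $H$-continuous linear functionals $F_\omega$. The upper bound $F_\omega\le E$ is indeed immediate from \eqref{f1-1}--\eqref{f1-3}, and your sketch of the reverse inequality---an interior piece approximating the bulk and a collar piece installing the prescribed normal trace, with the collar chosen $|Du|$-thin so the two pieces have disjoint supports---is the standard construction and is sound. Two minor comments: you do not need to solve a divergence equation in the collar, since a smooth normal extension of a smooth approximation $g\approx-\Sgn(\gamma u-v)$ compactly supported in the relative interior of $\Gamma$ already lies in $X_2(\Omega)$; and the phrase ``mollifying $Du/|Du|$'' is imprecise, as that density is only defined $|Du|$-a.e.---in practice one simply quotes the sup-definition of the total variation to produce the interior field directly. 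What your argument buys is the dual formula $E=\sup_{\omega\in\mathcal{C}}F_\omega$, which is exactly the structure underlying the characterization of $\d E$ in Section~\ref{ssubd}; what it costs is that the hard direction is noticeably more work than the paper's extension trick.
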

\begin{proof}
If $\Gamma=\d\Omega$, then this fact is well-known, see \cite{soucek}. However, 
the definition of $E$ includes integration over $\Gamma$, which may be essentially smaller than $\d\Omega$, thus we prefer to include the proof. We use here 
the idea of Giaquinta-Modica-Sou\v cek, \cite{soucek}, to extend the functional $\int_\Omega |Du|$  to a bigger domain. We proceed by taking  any region $\tilde \Omega$ with Lipschitz boundary and such that the following conditions hold:\\
1) $\Omega\subset \tilde \Omega$;\\
2) $\d\tilde \Omega \cap \d \Omega = \d\Omega \setminus \Gamma$;\\
3) the region $\tilde\Omega \setminus\bar  \Omega$ has a Lipschitz continuous boundary.

When $\phi\in L^1(\d\Omega)$ is given, then we may find $\tilde\phi \in W^{1,1}(\tilde\Omega\setminus\bar \Omega)$ such that $\gamma \tilde\phi = \phi$ on $\Gamma$, see \cite{MR0750538, DD}. Then,  we define the following space,
$$
BV_{\Gamma, \phi}(\tilde\Omega) = \{u \in BV(\tilde\Omega): \ u = \tilde\phi \hbox{ on } \tilde\Omega \setminus \bar{\Omega}\}.
$$
It is a well-known fact that functional $BV(\tilde\Omega) \ni u \mapsto \int_{\tilde\Omega} |Du|$ is lower semi-continuous with respect to the $L^2$. As a result, this functional is lower semi-continuous on $BV_{\Gamma, \phi}(\tilde\Omega)$, a closed subspace of  $BV(\tilde\Omega)$. Once we realize that for $u\in BV_{\Gamma, \phi}(\tilde\Omega)$, we have
$$
D u = D u \llcorner \Omega + D u \llcorner \Gamma + \nabla u \llcorner \tilde\Omega \setminus \bar \Omega,
$$
where $D u \llcorner \Gamma = \nu(\phi - \gamma u) \cH^{N-1}\llcorner\Gamma$, where $\gamma u$ is the trace of $u\in BV(\Omega)$,
then
$$
\int_{\tilde\Omega} |D u| = \int_{\Omega} |D u| + \int_{\Gamma} |\phi - \gamma u|\,d \cH^{N-1} +
\int_{\tilde{\Omega} \setminus \bar{\Omega}} |\nabla \tilde\phi|\,dx.
$$
As a result, the functional  
$L^2(\Omega)\ni u\mapsto E(u,v)=:E_v(u)$ is lower semi-continuous. In order to complete the task, we have to consider
$\varliminf_{n\to \infty} E(u_n,v_n),$
when $(u_n,v_n) \to (u,v)$ in $H$. Since $|\gamma u_n - v_n| + | v_n - v| \ge |\gamma u_n - v|$, then we see,
$$
\varliminf_{n\to \infty} E(u_n,v_n) \ge 
\varliminf_{n\to \infty} E_v(u_n) - \lim_{n\to \infty} \int_\Gamma | v_n - v|\,d\cH^{N-1}.
$$
Finally,
our claim follows.
\end{proof}

\begin{remark}\label{Rem.lsc}
We noticed in the course of the proof above that for a
fixed $v\in L^2(\Gamma)$,  functional  
$E_v(u)$ is lower semi-continuous.
It is 
a relaxation, i.e. the lower semi-continuous envelope, of the following functional 
$$
L^2(\Omega)\ni u \mapsto  E^\infty_v(u) = \left\{
\begin{array}{ll}
    {\ds\int_\Omega |Du|}&\hbox{if } u\in BV(\Omega),\ \gamma|_\Gamma u =v,\\[2ex]
    +\infty & \hbox{otherwise.}
\end{array}
 \right.
 $$
 Then,  for any $a>1$ functional $E^a_v : L^2(\Omega) \longrightarrow [0, \infty]$,  given by
 $$
 E^a_v(u) =  \left\{
\begin{array}{ll}
\ds\int_\Omega |Du| +a \int_{\Gamma} |\gamma u -v| \, d \mathcal{H}^{N -1} &\hbox{if } u\in BV(\Omega),\\[2ex]
    +\infty & \hbox{otherwise,}
\end{array}
 \right.
 $$
is not lower semi-continuous, because 
$ E_v < E_v^a \leq E_v^\infty $ on $ L^2(\Omega) $.
The relaxation of $E^a_v$ is $E_v.$ On the other hand, it is easy to check that for any $a\in (0,1)$ functional $E^a_v$ is lower semi-continuous. Indeed, in this case $E^a_v = a E_v +(1-a) \int_\Omega |Du|$ and both ingredients are lower semi-continuous.
\end{remark}

\begin{remark}\label{Rem.weak.lsc}
We recall that lower semi-continuity of $E$ combined with its convexity implies sequential weak lower semi-continuity.
\end{remark}

\section{The evolution problem and the Comparison Principle} \label{sevolt}

We recall two basic abstract facts from the theory of maximal monotone operators.

%
\begin{thm}[Well-posedness] \label{Prop4.1}
    Let $\mathcal{E} : X \longrightarrow [0, \infty] $ be a proper, lower semi-continuous, convex function on a Hilbert space $X$.
    Then, for any $w_0 \in D(\mathcal{E})$ there is a unique solution $w \in W^{1, 2}_{loc}([0,\infty);X)$, such that 
\[
\frac{dw}{dt}(t) \in -\partial\mathcal{E} \left(w(t)\right)
    \text{, \quad a.e.}\quad t \in (0,\infty), \quad \mbox{with} \quad
    w(0) = w_0,\quad\mbox{in $ X $.}
\]
Also, the function $ [0, \infty) \ni t \mapsto \mathcal{E}(w(t)) \in [0, \infty) $ is absolutely continuous on any compact interval, and it satisfies that
\begin{equation*}
    \frac{d}{dt} \mathcal{E}(w(t)) = -\left\| \frac{dw}{dt}(t) \right\|_X^2 \quad \mbox{a.e. $ t > 0 $.}
\end{equation*}
    In particular, if $ w_0 \in D(\d\mathcal{E}) $, then $ w \in W_{loc}^{1, \infty}([0, \infty); X) $, $w$ is right-differentiable over $ [0, \infty) $, and at every $ t \geq 0 $, the right derivative $ \frac{d^+w}{dt}(t) $ satisfies
\[
    \frac{d^+ w}{dt}(t) = -\partial^o \mathcal{E} \left(u(t)\right) \quad \mbox{in $ X $,}
\]
where $\partial^o \mathcal{E}$ denotes the minimal section of $\partial \mathcal{E}$. 
\end{thm}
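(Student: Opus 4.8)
The plan is to recognize this statement as the classical existence, uniqueness and regularity theorem for gradient flows governed by subdifferentials, due to K\=omura \cite{Komura} and Br\'ezis \cite{MR0348562}, and to reduce it to the theory of maximal monotone operators. The cornerstone is that for a proper, lower semi-continuous, convex $\mathcal{E}$ the operator $A := \partial\mathcal{E}$ is maximal monotone in $X \times X$. Monotonicity is immediate from the defining variational inequality. For maximality I would verify that $I + \partial\mathcal{E}$ is surjective: given $f \in X$, the strictly convex, coercive, l.s.c.\ functional $w \mapsto \frac{1}{2}\|w - f\|_X^2 + \mathcal{E}(w)$ has a unique minimizer $z$, and the optimality condition reads $f - z \in \partial\mathcal{E}(z)$, i.e. $f \in (I + \partial\mathcal{E})(z)$; Minty's characterization then upgrades monotonicity to maximality (this is the Moreau--Rockafellar theorem).

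First I would construct solutions by the Moreau--Yosida regularization. For $\lambda > 0$ set $\mathcal{E}_\lambda(w) := \inf_{z \in X}\bigl( \frac{1}{2\lambda}\|w - z\|_X^2 + \mathcal{E}(z)\bigr)$, a convex $C^{1,1}$ function whose gradient $A_\lambda := \nabla\mathcal{E}_\lambda = \frac{1}{\lambda}(I - (I + \lambda A)^{-1})$ is globally Lipschitz. The regularized Cauchy problem $\frac{dw_\lambda}{dt} = -A_\lambda(w_\lambda)$, $w_\lambda(0) = w_0$, has a unique global solution by Cauchy--Lipschitz. When $w_0 \in D(\partial\mathcal{E})$ one proves that $t \mapsto \|A_\lambda w_\lambda(t)\|_X$ is nonincreasing, so $\|A_\lambda w_\lambda(t)\|_X \le \|\partial^o\mathcal{E}(w_0)\|_X$ uniformly, together with a Cauchy estimate $\|w_\lambda(t) - w_\mu(t)\|_X^2 \le C(\lambda + \mu)$ on compact intervals; hence $w_\lambda \to w$ uniformly, and the demiclosedness (strong--weak closedness) of the maximal monotone graph $A$ shows that the Lipschitz limit $w$ solves $\frac{dw}{dt} \in -\partial\mathcal{E}(w)$. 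For general $w_0 \in D(\mathcal{E})$ I would approximate $w_0$ by data in $D(\partial\mathcal{E})$ and pass to the limit using the contraction property below; the energy identity then yields $\int_0^T \|\frac{dw}{dt}\|_X^2 \, dt \le \mathcal{E}(w_0) < \infty$, giving $w \in W^{1,2}_{loc}([0,\infty);X)$.

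Uniqueness and the contraction property follow from monotonicity: for two solutions $w_1, w_2$ one has $\frac{1}{2}\frac{d}{dt}\|w_1 - w_2\|_X^2 = -(\,\eta_1 - \eta_2, w_1 - w_2)_X \le 0$ with $\eta_i \in \partial\mathcal{E}(w_i)$, whence $\|w_1(t) - w_2(t)\|_X \le \|w_1(0) - w_2(0)\|_X$. For the dissipation equality I would use the chain rule along the flow: since $t \mapsto w(t)$ is locally absolutely continuous and $-\frac{dw}{dt}(t) \in \partial\mathcal{E}(w(t))$ for a.e.\ $t$, the composition $t \mapsto \mathcal{E}(w(t))$ is absolutely continuous on compact intervals and $\frac{d}{dt}\mathcal{E}(w(t)) = \bigl(\xi(t), \frac{dw}{dt}(t)\bigr)_X$ for every $\xi(t) \in \partial\mathcal{E}(w(t))$; the choice $\xi(t) = -\frac{dw}{dt}(t)$ gives $\frac{d}{dt}\mathcal{E}(w(t)) = -\|\frac{dw}{dt}(t)\|_X^2$. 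Finally, when $w_0 \in D(\partial\mathcal{E})$ the uniform bound on $\|A_\lambda w_\lambda\|_X$ passes to the limit to give $w \in W^{1,\infty}_{loc}$, and a further monotonicity argument shows that the forward difference quotients $\frac{1}{h}\bigl(w(t+h) - w(t)\bigr)$ converge as $h \downarrow 0$ to the element of $-\partial\mathcal{E}(w(t))$ of least norm, i.e. $\frac{d^+w}{dt}(t) = -\partial^o\mathcal{E}(w(t))$ for every $t \ge 0$.

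The main obstacle I expect is the sharp regularity statement for $w_0 \in D(\partial\mathcal{E})$: the \emph{everywhere} right-differentiability and the identification of the right derivative with the minimal section $-\partial^o\mathcal{E}(w(t))$. This is more delicate than existence itself, since it requires controlling the flow at every time rather than almost every time, exploiting both the nonincreasing character of $t \mapsto \|\frac{d^+w}{dt}(t)\|_X$ along gradient flows of convex functionals and the lower semi-continuity of $w \mapsto \|\partial^o\mathcal{E}(w)\|_X$. The chain rule for $t \mapsto \mathcal{E}(w(t))$ is a secondary difficulty because $\mathcal{E}$ is only l.s.c.\ and nonsmooth; it is cleanest to establish it first for the regularized $\mathcal{E}_\lambda$ and then pass to the limit.
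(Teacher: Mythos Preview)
Your sketch is a correct outline of the classical K\=omura--Br\'ezis argument via Moreau--Yosida regularization and maximal monotonicity, and it is essentially the proof found in the references the paper cites. Note, however, that the paper does not prove this theorem at all: it simply records it as a known result and refers the reader to Br\'ezis \cite{MR0348562} and Pazy \cite{pazy}, so there is no ``paper's own proof'' to compare against beyond those sources.
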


This type of well-posedness of the gradient flow of a convex functional goes back to K\=omura.
The above version can be found in Br\'ezis, see \cite{MR0348562} or Pazy, see \cite{pazy}.
 Here, we note that $ W_{loc}^{1, 2}([0, \infty); X) $ and $ W_{loc}^{1, \infty}([0, \infty); X) $ are contained in the class $W^{1,1}_{loc}(0,\infty;X)$ of all absolute continuous functions in $(\delta,T)$ for any $T>\delta>0$ with values in the Hilbert space $X$.
\bigskip

In order to proceed, we recall the notion of Banach lattice. 
An ordered Banach space $X$ with ordering $\ge$ is called a {\it vector lattice}, if the linear structure is compatible with the ordering, i.e.
$$
f \ge g \hbox{ implies } f+ h \ge g+ h\quad\hbox{for all }f,g,h\in X;
$$
$$
f \ge 0 \hbox{ implies } \lambda  \ge 0\quad\hbox{for all }f\in X, \lambda \ge 0.
$$
In addition, we require that any two elements $f, g \in X$ have a supremum, denoted by $f\vee g $ and infimum, denoted by $f\wedge g $. Besides,  for $w \in X$, we denote by $w_+$ its positive part, i.e.\ $w_+ = w \vee 0 = \max(w,0)$.  We refer the interested reader for more details on the Banach lattice  to \cite{arendt}.

\begin{prop}[Order preserving structure] \label{Prop4.2}
Assume that a Hilbert space X is a vector lattice. 
Let us suppose that
\begin{equation*}
\frac{d}{dt} \|\omega_+(t)\|_X^2 = 2(\omega_+(t),\omega'(t))\quad \mbox{for any $\omega \in W^{1,1}_{loc}(0,\infty;X)$.}
\end{equation*}
Let $\mathcal{E}$ in Theorem \ref{Prop4.1} fulfills 
\[
    \mathcal{E}(w_1 \vee w_2) + \mathcal{E}(w_1 \wedge w_2)
    \leq \mathcal{E}({w_1}) + \mathcal{E}({w_2}) \quad\text{for all}\quad
    w_1, w_2 \in D(\mathcal{E}).
\]
    If $w_1$ and $w_2$ are two solutions (in the sense of Theorem  \ref{Prop4.1}) of
        \begin{equation}\label{ken01} 
        \frac{dw}{dt}(t) \in -\partial\mathcal{E}(w(t)) \quad \mbox{ in $ X $,} \quad \mbox{a.e. $ t > 0 $,}
        \end{equation}
    and if the initial data $w_{10}$ and $w_{20}$ satisfy
    \begin{equation*}
        w_1(0) = w_{10} \leq w_{20} = w_2(0) \quad \mbox{in $ X $,}
    \end{equation*}
then 
    \begin{equation*}
        w_1(t) \leq w_2(t) \quad \mbox{in $ X $,} \quad \mbox{for all $t>0$.}
    \end{equation*}
\end{prop}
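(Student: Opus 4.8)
The plan is to prove the comparison principle via the standard Kenmochi--Br\'ezis argument, testing the difference of the two gradient flow inclusions against the positive part of the difference of solutions. First I would set $w := w_1 - w_2$ and observe that, by the differentiation rule assumed in the hypotheses, the function $t \mapsto \tfrac12\|w_+(t)\|_X^2$ is absolutely continuous with
\[
\frac{d}{dt}\,\tfrac12\|w_+(t)\|_X^2 = \bigl(w_+(t),\, w_1'(t) - w_2'(t)\bigr)_X \quad\text{a.e.}\ t>0.
\]
Since $w_1,w_2$ solve \eqref{ken01}, there exist sections $\eta_i(t) \in \partial\mathcal{E}(w_i(t))$ with $w_i'(t) = -\eta_i(t)$ a.e., so the right-hand side equals $-\bigl(w_+(t), \eta_1(t) - \eta_2(t)\bigr)_X$. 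The goal is then to show this quantity is nonpositive, which together with $w_+(0)=0$ (from $w_{10}\le w_{20}$) and Gronwall (in fact, monotonicity of a nonincreasing nonnegative function starting at $0$) forces $w_+(t)\equiv 0$, i.e. $w_1(t)\le w_2(t)$ for all $t>0$.

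The heart of the matter is to establish the inequality $\bigl(w_+(t), \eta_1(t)-\eta_2(t)\bigr)_X \ge 0$, and this is where the submodularity hypothesis on $\mathcal{E}$ enters. The idea is to use the lattice operations as test perturbations in the subdifferential inequalities. Writing $a := w_1(t)$ and $b := w_2(t)$, I would test the defining variational inequality for $\eta_1 \in \partial\mathcal{E}(a)$ against the competitor $a \wedge b$ and the inequality for $\eta_2 \in \partial\mathcal{E}(b)$ against $a \vee b$, giving
\[
(\eta_1,\, a\wedge b - a)_X \le \mathcal{E}(a\wedge b) - \mathcal{E}(a),
\qquad
(\eta_2,\, a\vee b - b)_X \le \mathcal{E}(a\vee b) - \mathcal{E}(b).
\]
Adding these and invoking the submodularity assumption $\mathcal{E}(a\vee b)+\mathcal{E}(a\wedge b)\le \mathcal{E}(a)+\mathcal{E}(b)$, the right-hand side is $\le 0$. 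On the left, using the lattice identities $a\wedge b - a = -(a-b)_+$ and $a\vee b - b = (a-b)_+ = w_+$, the sum collapses to $-(\eta_1 - \eta_2,\, w_+)_X$, yielding exactly $(\eta_1-\eta_2, w_+)_X \ge 0$ as desired.

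The main obstacle I anticipate is purely the bookkeeping of measurability and integrability needed to justify the chain rule for $\|w_+(t)\|_X^2$ along the trajectory: the solutions lie only in $W^{1,1}_{loc}(0,\infty;X)$, so one must confirm that $t\mapsto w_+(t)$ is itself absolutely continuous in $X$ with the stated derivative formula, which is precisely what the displayed hypothesis of the proposition supplies. One should also verify that the competitors $a\wedge b$ and $a\vee b$ lie in $D(\mathcal{E})$ so that the subdifferential inequalities may legitimately be tested against them; this follows because $D(\mathcal{E})$ is stable under the lattice operations, as is implicit in the finiteness of the submodularity inequality for all $w_1,w_2\in D(\mathcal{E})$. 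Once these measure-theoretic points are settled, the algebraic core reduces to the three displayed inequalities above, and the conclusion is immediate.
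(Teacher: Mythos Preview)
Your proposal is correct and follows essentially the same route as the paper: test the subdifferential inequalities for $w_1$ and $w_2$ against the lattice competitors $w_1\wedge w_2$ and $w_1\vee w_2$, add, invoke the submodularity hypothesis, and deduce that $\tfrac{d}{dt}\|(w_1-w_2)_+\|_X^2\le 0$, hence $(w_1-w_2)_+\equiv 0$. The only cosmetic difference is that the paper swaps which inequality is tested against which competitor and ends up tracking $(w_2-w_1)_+$; your choice of $(w_1-w_2)_+$ is the one that vanishes at $t=0$, so your write-up is actually the cleaner version of the same argument.
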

This type of argument is well-known. For example it is presented in the thesis of Br\'ezis \cite{brezis-thesis} and more generally in Kenmochi-Mizuta-Nagai, see  \cite{Kenmochi}. We give here a proof since it is elementary.
\begin{proof}
By definition, we see that for a.e.\ $t>0$
\begin{align*}
    \mathcal{E}({\varphi}) - \mathcal{E}\left({w_1(t)}\right) &\geq {\left(w_1'(t),w_1(t)-\varphi\right)_X}
    \quad\text{for all}\quad {\varphi \in X,} \\
    \mathcal{E}({\hat{\varphi}}) - \mathcal{E}\left({w_2(t)}\right) &\geq {\left(w_2'(t),w_2(t)-\hat{\varphi}\right)_X}
    \quad\text{for all}\quad {\hat{\varphi} \in X.}
\end{align*}
In these inequalities, we take
\[
    \begin{cases}
        \varphi = w_1(t) + (w_2-w_1)_+(t) = (w_1 \vee w_2)(t),
        \\
        \hat{\varphi} = w_2(t)-(w_2-w_1)_+(t) = (w_1 \wedge w_2)(t);
    \end{cases}
\]
the last identities follow 
from the property of a vector lattice. Then one gets
%
\begin{align*}
    \mathcal{E} \bigl((w_1 \vee w_2)(t)\bigr) - \mathcal{E}(w_1(t)) &\geq \left(w_1'(t),-(w_2-w_1)_+(t) \right)_X ,\\
    \mathcal{E}\bigl( (w_1 \wedge w_2)(t) \bigr) - \mathcal{E}(w_2(t)) &\geq \left(w_2'(t),(w_2 -w_1)_+(t) \right)_X
\end{align*}
for a.e.\ $t>0$.
Adding these two inequalities and invoking our assumption for $\mathcal{E}$ with respect to $\wedge$ and $\vee$, we see that
\[
  0 \geq \left((w_2'-w_1')(t),(w_2-w_1)_+(t) \right)_X
    = \frac{1}{2} \frac{d}{dt} \left\|(w_2-w_1)_+(t)\right\|_X^2 \quad \mbox{a.e. $ t > 0 $.}
\]
We thus conclude that
        \[
    \frac{d}{dt} \left\|(w_2-w_1)_+(t)\right\|_X^2 \leq 0 \quad \mbox{a.e. $ t > 0 $.}
\]
Thus, the order preserving property follows.
\end{proof}
\begin{remark}
From the proof above, it is easy to claim a comparison principle saying that if $w_1$ is a subsolution and $w_2$ is a supersolution of \eqref{ken01}, then $w_1 \leq w_2$ provided that $w_{1}(0) \leq w_{2}(0)$.
Here, we say $w \in W^{1,2}_{loc}([0,\infty);X)$ is a \textit{subsolution} if for a.e.\ $t>0$ the inequality
\[
    \mathcal{E}\left(w(t)+h\right) - \mathcal{E}\left(w(t)\right) \geq \left(-w'(t),h\right)_X \quad\text{for all } h \in X\text{ and $ h \geq 0 $}
\]
is fulfilled.
 A supersolution is defined in a symmetric way.
\end{remark}

We now consider the gradient flow of $E$ defined in (\ref{2dfE}) 
in a Hilbert space $H=L^2(\Omega)\times L^2(\Gamma)$ equipped with an inner product 
\[
\left((f_1,f_2),(g_1,g_2)\right)_\tau 
= \int_\Omega f_1 g_1 ,dx + \tau \int_\Gamma f_2 g_2 \, d \mathcal{H}^{N-1}
\]
for $f=(f_1,f_2)$, $g=(g_1,g_2)\in H$.
Here, $\tau>0$ is a fixed parameter.
The topology defined by the inner product $(\cdot ,\cdot )_\tau$ is the same but its gradient flow is different.
Formally, the gradient flow with respect to the $(\cdot,\cdot)_\tau$ inner product reads as eq. (\ref{eq1}).
%
Since it is clear that $E$ in (\ref{2dfE}) is convex, lower semi-continuous with respect to the convergence in the standard inner product as well as with respect to $(\cdot,\cdot)_\tau$ and $H=\overline{D(E)}$, Proposition \ref{Prop3.1} enables us to apply Theorem \ref{Prop4.1} to get a well-posedness result.
%
%
%
\begin{thm} \label{Thm4.4}
    For any $U_0:=(u_0,v_0)\in D(E) = (BV(\Omega) \cap L^2(\Omega)) \times L^2(\Gamma) $ there is a unique solution $ U := (u,v) \in W^{1,2}_{loc} \left([0,\infty); H\right)$ of the following problem 
\begin{align}\label{rn1}
    ~& \ds \frac{dU}{dt}(t) := \frac{d}{dt} \left( u(t), v(t) \right) 
    \in -\partial_\tau E(U(t))\equiv -\partial_\tau E\left(u(t),v(t)\right) \quad
    \mbox{in $ H $,} \quad \text{a.e. } t>0,\\ 
     ~& U(0) \equiv (u(0), v(0)) = (u_0, v_0) \quad \mbox{in $ H $,} 
     \nonumber
\end{align}
where $\partial_\tau E$ denotes the subdifferential of $ E $ with respect to the inner product $(\cdot ,\cdot )_\tau$. Also, the function $ t \in [0, \infty) \longrightarrow E(U(t)) \equiv E(u(t), v(t)) \in [0, \infty) $ is absolutely continuous on any compact interval, and it satisfies that
    \begin{align}\label{rn2}
        \frac{d}{dt} E(U(t)) &~ = -\left( \frac{dU}{dt}(t), \frac{dU}{dt}(t)\right)_\tau
        \nonumber
        \\
        &~ =  -\left\| \frac{du}{dt}(t) \right\|_{L^2(\Omega)}^2 \hspace{-0ex} -\tau \left\| \frac{dv}{dt}(t) \right\|_{L^2(\Gamma)}^2\quad \mbox{a.e. $ t > 0 $.}
\end{align}
    In particular, if $ U_0 \in D(\partial_\tau E) $, then  $ U = (u, v) \in W_{loc}^{1, \infty}([0, \infty); H) $, $ U $ is right-differentiable over $ [0, \infty) $, and at every $t>0$, the right derivative $ \frac{d^+U}{dt}(t) := \frac{d^+}{dt}(u(t), v(t)) $ satisfies
    \[
    \frac{d^+ }{dt}U(t) = -\partial^o_\tau E \left(U(t)\right) \quad \mbox{in $ H $,}\quad \mbox{a.e. $ t > 0 $.}\eqno\qed
\]
\end{thm}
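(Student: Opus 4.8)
The plan is to obtain Theorem \ref{Thm4.4} as a direct application of the abstract well-posedness result, Theorem \ref{Prop4.1}, to the functional $E$ of \eqref{2dfE}, now viewed on the Hilbert space $\bigl(H,(\cdot,\cdot)_\tau\bigr)$. Thus the whole task reduces to checking that $E$ meets the three hypotheses of Theorem \ref{Prop4.1}---proper, convex and lower semi-continuous---relative to the $\tau$-inner product, and then to rewriting the abstract conclusions in the concrete notation of \eqref{rn1}--\eqref{rn2}.

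First I would record that $(\cdot,\cdot)_\tau$ is genuinely an inner product: it is bilinear and symmetric, and since $\tau>0$ it is positive definite. Moreover the induced norm is equivalent to the standard one, because for every $w\in H$ one has $\min\{1,\tau\}\,\|w\|_H^2 \le (w,w)_\tau \le \max\{1,\tau\}\,\|w\|_H^2$; in particular $\bigl(H,(\cdot,\cdot)_\tau\bigr)$ is complete, hence a Hilbert space, and the topology it defines coincides with the standard $H$-topology. This norm equivalence is precisely the mechanism by which the required structural properties transfer between the two inner products.

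Next I would verify the hypotheses. Properness is immediate, since the constant functions lie in $BV(\Omega)\cap L^2(\Omega)$ and any $v\in L^2(\Gamma)$ is admissible, so that $D(E)=(BV(\Omega)\cap L^2(\Omega))\times L^2(\Gamma)\neq\emptyset$; as this set contains all smooth pairs it is also dense, giving $\overline{D(E)}=H$. Convexity of $E$ follows from its structure: $u\mapsto\int_\Omega|Du|$ is convex, while $(u,v)\mapsto\int_\Gamma|\gamma u-v|\,d\cH^{N-1}$ is convex because $\gamma$ is linear and $|\cdot|$ is a norm. Both of these features---properness and convexity---are purely algebraic and do not depend on the choice of inner product. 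Finally, lower semi-continuity in the $\tau$-topology is, by the norm equivalence above, the same as lower semi-continuity in the standard topology, which is exactly the content of Proposition \ref{Prop3.1}. Hence $E$ satisfies all the assumptions of Theorem \ref{Prop4.1} on $\bigl(H,(\cdot,\cdot)_\tau\bigr)$.

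With the hypotheses in place, Theorem \ref{Prop4.1} yields directly the existence and uniqueness of $U\in W^{1,2}_{loc}([0,\infty);H)$ solving \eqref{rn1}, together with the absolute continuity of $t\mapsto E(U(t))$ and the regularity and minimal-section statements for $U_0\in D(\partial_\tau E)$. It then remains only to unpack the abstract dissipation identity $\frac{d}{dt}E(U(t))=-\bigl(\frac{dU}{dt}(t),\frac{dU}{dt}(t)\bigr)_\tau$: writing $\frac{dU}{dt}=\bigl(\frac{du}{dt},\frac{dv}{dt}\bigr)$ and expanding the $\tau$-inner product reproduces exactly the two-term dissipation \eqref{rn2}. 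I do not expect any genuine obstacle here; the one point deserving care is the clean separation between the inner-product-free properties (properness, convexity) and the topology-dependent one (lower semi-continuity, which transfers only via the norm equivalence). As emphasized just before the statement, although the $\tau$-topology is unchanged, the subdifferential $\partial_\tau E$ itself depends on $\tau$, so the flow produced is genuinely $\tau$-dependent even though the hypotheses of Theorem \ref{Prop4.1} are verified uniformly in $\tau$.
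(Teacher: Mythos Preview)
Your proposal is correct and follows essentially the same approach as the paper: the paper simply observes (in the paragraph immediately preceding the theorem) that $E$ is convex, that Proposition \ref{Prop3.1} gives lower semi-continuity with respect to the standard and hence the $\tau$-topology, and that $H=\overline{D(E)}$, so Theorem \ref{Prop4.1} applies; your write-up makes the norm-equivalence step and the unpacking of \eqref{rn2} more explicit, but the argument is the same.
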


We notice that $H$ has the desired lattice structure after we define
\[
(f_1,f_2) \leq (g_1,g_2) \quad\text{for}\quad
f = (f_1,f_2),\ g = (g_1,g_2) \in H 
\]
if $f_1 \leq f_2$ a.e.\ in $\Omega$ and $g_1 \leq g_2$\ $\mathcal{H}^{N-1}$-a.e.\ on $\Gamma$. We  also check that the functional $E$ has the desired properties:
\begin{prop} If $E$ is defined by formula (\ref{2dfE}), then
    \[
E(u_1 \wedge u_2, v_1 \wedge v_2) + E(u_1 \vee u_2, v_1 \vee v_2)
    \leq E(u_1, v_1) + E(u_2, v_2).
\]
\end{prop}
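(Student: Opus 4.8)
The plan is to treat the two summands of $E$ separately: the bulk total variation $\int_\Omega |Du|$ and the boundary penalty $\int_\Gamma |\gamma u - v|\,d\cH^{N-1}$. First I would record that $BV(\Omega)$ and $L^2(\Gamma)$ are themselves lattices, so that $u_1 \wedge u_2,\, u_1 \vee u_2 \in BV(\Omega)$ and $v_1 \wedge v_2,\, v_1 \vee v_2 \in L^2(\Gamma)$; hence all four arguments lie in $D(E)$ and every term is finite (otherwise the asserted inequality is trivial). It then suffices to establish the submodularity inequality for each of the two summands separately and add them.

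For the bulk term I would invoke the classical lattice property of the total variation,
\[
\int_\Omega |D(u_1 \wedge u_2)| + \int_\Omega |D(u_1 \vee u_2)| \le \int_\Omega |Du_1| + \int_\Omega |Du_2|.
\]
The cleanest route is the coarea formula $\int_\Omega |Du| = \int_{\bR} P(\{u > s\};\Omega)\,ds$ combined with the identities $\{u_1 \vee u_2 > s\} = \{u_1 > s\} \cup \{u_2 > s\}$ and $\{u_1 \wedge u_2 > s\} = \{u_1 > s\} \cap \{u_2 > s\}$, together with the submodularity of perimeter $P(A \cup B) + P(A \cap B) \le P(A) + P(B)$; integrating the last inequality in $s$ yields the claim.

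For the boundary term the key structural fact is that the trace commutes with the lattice operations, i.e.\ $\gamma(u_1 \wedge u_2) = \gamma u_1 \wedge \gamma u_2$ and $\gamma(u_1 \vee u_2) = \gamma u_1 \vee \gamma u_2$ $\cH^{N-1}$-a.e.\ on $\d\Omega$. I would justify this by noting that the trace at $\cH^{N-1}$-a.e.\ boundary point is the approximate limit of the function from inside $\Omega$, and approximate limits commute with the continuous maps $(s,t) \mapsto s \wedge t$ and $(s,t) \mapsto s \vee t$; equivalently, one uses $\gamma|w| = |\gamma w|$ together with $u \vee v = \tfrac12(u+v+|u-v|)$, $u \wedge v = \tfrac12(u+v-|u-v|)$ and the linearity of $\gamma$. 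Granting this, the boundary inequality reduces, after integration over $\Gamma$, to the pointwise statement for real numbers $a_i = \gamma u_i(y)$, $b_i = v_i(y)$:
\[
|(a_1 \wedge a_2) - (b_1 \wedge b_2)| + |(a_1 \vee a_2) - (b_1 \vee b_2)| \le |a_1 - b_1| + |a_2 - b_2|.
\]

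This pointwise inequality I would prove by a short case analysis: assuming without loss of generality $a_1 \le a_2$, the case $b_1 \le b_2$ gives equality, while in the case $b_2 < b_1$ the left-hand side equals $|a_1 - b_2| + |a_2 - b_1|$, which is the sorted pairing of the lists $\{a_1,a_2\}$ and $\{b_1,b_2\}$ and hence minimizes $\sum|a-b|$ over all pairings, so it is bounded above by $|a_1 - b_1| + |a_2 - b_2|$. Summing the bulk and boundary inequalities gives the result. The main obstacle, in my view, is the rigorous justification of the commutation of the trace with $\wedge$ and $\vee$; the remaining ingredients (the coarea/perimeter submodularity and the elementary rearrangement inequality) are standard, so I would devote the most care to the trace, ensuring it is understood in the correct ($\cH^{N-1}$-a.e., approximate-limit) sense.
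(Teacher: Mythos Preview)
Your proposal is correct and follows essentially the same two-step decomposition as the paper: cite the lattice inequality for the total variation (the paper refers to \cite{MR2101878}; your coarea/perimeter-submodularity argument is the standard proof of that same fact), then reduce the boundary term to a pointwise inequality for real numbers. Your treatment of the boundary term is in fact more careful than the paper's in two respects: you explicitly justify that the trace commutes with $\wedge,\vee$ (the paper tacitly ``identifies $u_i$ with their traces''), and you correctly prove the pointwise \emph{inequality} $|(a_1\wedge a_2)-(b_1\wedge b_2)|+|(a_1\vee a_2)-(b_1\vee b_2)|\le |a_1-b_1|+|a_2-b_2|$ via a rearrangement argument, whereas the paper asserts equality, which fails (take $a_1=1,a_2=0,b_1=0,b_2=1$) --- though only the $\le$ direction is needed.
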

\begin{proof}
First, by referring to \cite[Lemmas 2.2 and 3.1]{MR2101878}, we verify that
    \begin{equation*}
        \int_\Omega |D(u_1 \vee u_2)| +\int_\Omega |D(u_1 \wedge u_2)| \leq \int_\Omega |D u_1| +\int_\Omega |D u_2|.
    \end{equation*}
We also have to show that
    $$
\int_\Gamma (|u_1 \vee u_2 - v_1 \vee v_2| +
|u_1 \wedge u_2 - v_1 \wedge v_2|) \,d\cH^{N-1} =
\int_\Gamma (|u_1  - v_1 | +
| u_2 -  v_2|) \,d\cH^{N-1} 
$$
where we identified $u_i$, $i=1,2$ with their traces on $\Gamma$.

Since the roles of $u_1$ and $u_2$ are interchangeable, we may assume that 
$u_1 \vee u_2 = u_1$ and $u_1 \wedge u_2 = u_2$. If $v_1 \ge v_2$, then there is nothing to prove, thus we may assume that $v_1 < v_2$. Finally, we have to check that
$$
|u_1 - v_2| + |u_2 - v_1 | = |u_1  - v_1 | + | u_2 -  v_2|\quad \hbox{for a.e. }x\in \Gamma.
$$
However, this obviously holds for all $u_2$.
\end{proof}

The result we have just proved permits us 
to apply Proposition \ref{Prop4.2} to conclude the order preserving property.
\begin{thm}[Order preserving property] \label{Thm4.5}
Let $U_i = (u_i,v_i)$, $i=1,2,$ be a solution in Theorem \ref{Thm4.4} starting from $U_{i_0}=(u_{i_0},v_{i_0})\in H$.
If $U_{10}\leq U_{20}$, then $U_1(t) \leq U_2(t)$ for all $t>0$, i.e., $u_1(t)\leq u_2(t)$, $v_1(t) \leq v_2(t)$ for all $t>0$.
\end{thm}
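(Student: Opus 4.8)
The plan is to recognize Theorem~\ref{Thm4.5} as a direct application of the abstract order-preserving criterion, Proposition~\ref{Prop4.2}, to the gradient flow constructed in Theorem~\ref{Thm4.4}. To this end I would set $X := H$ endowed with the weighted inner product $(\cdot,\cdot)_\tau$ and take $\mathcal{E} := E$; then the subdifferential $\partial\mathcal{E}$ appearing in Proposition~\ref{Prop4.2} coincides with $\partial_\tau E$, and the solutions $U_1, U_2$ furnished by Theorem~\ref{Thm4.4} are exactly solutions of \eqref{ken01} in the required sense. It then suffices to verify the three hypotheses of Proposition~\ref{Prop4.2} for this choice of $(X,\mathcal{E})$.

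First I would check that $X$ is a vector lattice. The ordering on $H$ is defined componentwise, so that the supremum and infimum of $(u_1,v_1)$ and $(u_2,v_2)$ are $(u_1 \vee u_2, v_1 \vee v_2)$ and $(u_1 \wedge u_2, v_1 \wedge v_2)$, computed as the pointwise maxima and minima in $L^2(\Omega)$ and $L^2(\Gamma)$. These operations are compatible with the linear structure and, crucially, do not depend on the value of $\tau$: the inner product $(\cdot,\cdot)_\tau$ induces the same topology and the same order as the standard one, so $X$ is a Hilbert lattice.

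Second, and this is the step requiring the most care, I would establish the chain rule $\frac{d}{dt}\|\omega_+(t)\|_X^2 = 2(\omega_+(t),\omega'(t))_X$ for every $\omega \in W^{1,1}_{loc}(0,\infty;X)$. Since $\|(u,v)\|_X^2 = \|u\|_{L^2(\Omega)}^2 + \tau\|v\|_{L^2(\Gamma)}^2$ with $\tau>0$ a fixed constant, this reduces to the corresponding identity in each Hilbert component. The latter is the classical differentiation formula for the positive part of an $L^2$-valued absolutely continuous curve: the function $s \mapsto (s_+)^2$ is $C^1$ with Lipschitz derivative $2s_+$, so the composition rule together with dominated convergence justifies differentiating $\int (\omega_+)^2$ under the integral sign and interchanging it with the derivative in $t$. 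Summing the $\Omega$- and $\Gamma$-contributions with weights $1$ and $\tau$ then yields the stated formula in $X$.

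Third, the submodularity inequality $\mathcal{E}(w_1 \vee w_2) + \mathcal{E}(w_1 \wedge w_2) \le \mathcal{E}(w_1) + \mathcal{E}(w_2)$ is precisely the content of the Proposition established just above, which splits $E$ into the total-variation part (handled via \cite[Lemmas 2.2 and 3.1]{MR2101878}) and the boundary penalty (handled by the elementary rearrangement identity on $\Gamma$). With all three hypotheses in hand, Proposition~\ref{Prop4.2} applies verbatim: from $U_{10} \le U_{20}$ one concludes $U_1(t) \le U_2(t)$ for all $t>0$, which unpacks to $u_1(t) \le u_2(t)$ a.e.\ in $\Omega$ and $v_1(t) \le v_2(t)$ $\mathcal{H}^{N-1}$-a.e.\ on $\Gamma$. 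The only genuine subtlety is the chain rule of the second step; everything else is bookkeeping that matches the abstract framework to the concrete objects.
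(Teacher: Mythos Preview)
Your proposal is correct and follows exactly the paper's approach: the paper verifies the lattice structure on $H$ and the submodularity inequality for $E$ in the proposition immediately preceding Theorem~\ref{Thm4.5}, and then simply invokes Proposition~\ref{Prop4.2} (the chain rule for the positive part being a standard fact in $L^2$-type spaces that the paper does not spell out). Your write-up is in fact slightly more explicit than the paper's, which dispatches the theorem in a single sentence.
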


\section{The subdifferential and its canonical section}\label{ssubd}

This section is devoted to the characterizations of the subdifferential of $ E(u, v) $ given by \eqref{2dfE} and its canonical section. Even though eq. (\ref{rn1}) contains a parameter $\tau>0$, we shall see that without the loss of generality, it is sufficient to calculate the subdifferential of $ E(u, v) $ with respect to the standard inner product of $H$.
%
%

\subsection{The representation of the subdifferential}

The goal of this subsection is to prove the following proposition.

\begin{thm}[Representation of the subdifferential]\label{Prop.rep_subd}
    Let $ E(u, v) $ be given by \eqref{2dfE},  as a result it is a proper, lower semi-continuous and convex function on $ H $. Then, for pairs of functions $ (u, v) \in H $ and $ (\xi,\zeta) \in H $, the following two statements are equivalent.

\begin{description}
\item[\textmd{(A)}]$ (\xi, \zeta) \in \partial E(u, v) $ in $ H $ when $ (u, v) \in D(\partial E) $.
\item[\textmd{(B)}]$ (u, v) \in BV(\Omega) \times L^2(\Gamma) $, and there exists a 
vector field $ z \in X_2(\Omega) $, such that:
\begin{description}
\item[\textmd{(b1)}]$ \frac{(z, Du)}{|Du|} = 1 $, $ |Du| $-a.e. in $ \Omega $, and moreover, $ z \in \Sgn^N(\nabla u) $ a.e. in $ \Omega $;
\item[\textmd{(b2)}]$ - [z \cdot \nu] \in \Sgn(\gamma u -v) $ a.e. on $ \Gamma $, and $ [z \cdot \nu] = 0 $ a.e. on $ \partial \Omega \setminus \Gamma $; 
\item[\textmd{(b3)}]$ \xi = -\di z $ in $ L^2(\Omega) $, and $ \zeta =  [z \cdot \nu] $ in $ L^2(\Gamma) $; 
\end{description}\end{description} 
where $ \Sgn  $ is the abbreviation of the set-valued function $ \Sgn^1 : \mathbb{R} \to 2^{\mathbb{R}} $, 
defined in Remark \ref{Rem.conv}, when $ d = 1 $. 
\end{thm}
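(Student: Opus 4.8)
The plan is to prove the two implications separately, handling (B)$\Rightarrow$(A) as a direct computation and reserving the genuine difficulty for (A)$\Rightarrow$(B).

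For (B)$\Rightarrow$(A), assume a field $z$ satisfying (b1)--(b3) is given. For arbitrary $(w,\psi)\in D(E)$ I would substitute $\xi=-\di z$, $\zeta=[z\cdot\nu]$ and apply the Gauss--Green formula \eqref{f1-3} to rewrite
\begin{equation*}
(\xi,w-u)_\Omega+(\zeta,\psi-v)_\Gamma=\int_\Omega(z,D(w-u))-\int_\Gamma[z\cdot\nu]\bigl((\gamma w-\psi)-(\gamma u-v)\bigr)\,d\cH^{N-1},
\end{equation*}
where the vanishing of $[z\cdot\nu]$ on $\partial\Omega\setminus\Gamma$ is used to discard that part of the boundary. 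By \eqref{f1-2} and $\|z\|_\infty\le1$ one has $\int_\Omega(z,Dw)\le\int_\Omega|Dw|$, while (b1) gives $\int_\Omega(z,Du)=\int_\Omega|Du|$; on $\Gamma$, $|[z\cdot\nu]|\le1$ together with (b2) yields $-[z\cdot\nu](\gamma u-v)=|\gamma u-v|$ and $-[z\cdot\nu](\gamma w-\psi)\le|\gamma w-\psi|$. Summing these pointwise inequalities converts the identity above into the subdifferential inequality $(\xi,w-u)_\Omega+(\zeta,\psi-v)_\Gamma\le E(w,\psi)-E(u,v)$, which is exactly (A).

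For (A)$\Rightarrow$(B) I would exploit that $E$ is positively one-homogeneous in $(u,v)$, so its conjugate is the indicator of the closed convex polar set $K:=\partial E(0,0)$, and $(\xi,\zeta)\in\partial E(u,v)$ is equivalent to the conjunction $(\xi,\zeta)\in K$ \emph{and} the equality $((\xi,\zeta),(u,v))_H=E(u,v)$. Testing membership in $K$ with $w=0$ shows $|\zeta|\le1$ a.e.\ on $\Gamma$, and optimizing the defining inequality of $K$ over $\psi\in L^2(\Gamma)$ (a one-variable Legendre computation performed pointwise) collapses it to the single bulk condition
\begin{equation*}
\int_\Omega\xi\,w\,dx+\int_\Gamma\zeta\,\gamma w\,d\cH^{N-1}\le\int_\Omega|Dw|\qquad\text{for all }w\in BV(\Omega)\cap L^2(\Omega).
\end{equation*}
Hence the linear functional $w\mapsto(\xi,w)_\Omega+(\zeta,\gamma w)_\Gamma$ is dominated by the total variation, and the remaining task is to construct a contraction field $z$ realizing it.

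The main obstacle is precisely this construction of $z\in X_2(\Omega)$ with $\|z\|_\infty\le1$, $-\di z=\xi$, and the \emph{split} normal trace $[z\cdot\nu]=\zeta$ on $\Gamma$, $[z\cdot\nu]=0$ on $\partial\Omega\setminus\Gamma$. I would obtain it by a Hahn--Banach / Anzellotti representation: the displayed estimate says the functional is bounded by the total mass of $Dw$, so it extends to a bounded functional on the closure of $\{Dw:w\in BV(\Omega)\cap L^2(\Omega)\}\subset\cM(\Omega,\bR^N)$ and is represented by an $L^\infty$ field with $\|z\|_\infty\le1$; the formula \eqref{f1-3} then forces $-\di z=\xi$ and identifies the boundary trace. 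To secure the correct behaviour of $[z\cdot\nu]$ on the two pieces of $\partial\Omega$, I would carry out this representation on the enlarged Lipschitz domain $\tilde\Omega$ of Proposition \ref{Prop3.1}, on which $\Gamma$ becomes interior while $\partial\Omega\setminus\Gamma$ remains a Neumann boundary, and then restrict $z$ to $\Omega$. Finally, inserting this $z$ into the equality $((\xi,\zeta),(u,v))_H=E(u,v)$ and using \eqref{f1-2}, \eqref{f1-4} and $|\zeta|\le1$, the requirement that both pointwise inequalities $(z,Du)\le|Du|$ and $-[z\cdot\nu](\gamma u-v)\le|\gamma u-v|$ saturate yields (b1) (whence $z\in\Sgn^N(\nabla u)$ a.e.\ by the Cauchy--Schwarz equality case on the absolutely continuous part) and (b2), with (b3) built in by construction. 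The delicate points I expect to fight with are the $L^\infty$ bound on $z$ and the correct split of the normal trace across $\Gamma$ and $\partial\Omega\setminus\Gamma$.
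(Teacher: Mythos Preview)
Your argument for (B)$\Rightarrow$(A) is the same as the paper's. For (A)$\Rightarrow$(B), however, you take a genuinely different route. The paper does \emph{not} prove this implication directly: it introduces regularized functionals $E_\varepsilon$ built from $\sqrt{|\nabla u|^2+\varepsilon^2}+\tfrac{\varepsilon^2}{2}|\nabla u|^2$, solves the resolvent equation $(\mathcal{A}_\varepsilon+\mathcal{I}_H)(u_\varepsilon,v_\varepsilon)=(f,g)$ explicitly (Lemma \ref{axLem01}), extracts a weak-$*$ limit $z$ of $\nabla u_\varepsilon/\sqrt{|\nabla u_\varepsilon|^2+\varepsilon^2}$, and checks that the limit satisfies (b1)--(b3). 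This establishes surjectivity of $\mathcal{A}+\mathcal{I}_H$, and Minty's theorem then upgrades the inclusion $\mathcal{A}\subset\partial E$ to equality.

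Your approach via one-homogeneity and Hahn--Banach is more direct and avoids the auxiliary PDE entirely. It is correct, and the steps you flag as delicate are in fact less so than you fear: once you have $|L(w)|\le\int_\Omega|Dw|$ for, say, $w\in C^\infty(\bar\Omega)$, the functional $\nabla w\mapsto L(w)$ is well defined on a subspace of $L^1(\Omega,\bR^N)$ (constants are killed because $L(c)=0$), and ordinary Hahn--Banach plus $(L^1)^*=L^\infty$ produces $z$ with $\|z\|_\infty\le1$ directly on $\Omega$; testing with compactly supported $w$ gives $-\di z=\xi$, and then testing with arbitrary smooth $w$ forces $[z\cdot\nu]=\zeta\chi_\Gamma$ on $\partial\Omega$, which is exactly the split you want. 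The detour through the enlarged domain $\tilde\Omega$ is therefore unnecessary. What the paper's approach buys, by contrast, is robustness: the regularization--compactness--Minty scheme does not rely on homogeneity and would survive lower-order perturbations of $E$; it also furnishes $z$ as a limit of smooth fields, which can be convenient for later analysis.
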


For the proof of this proposition, we first prepare some additional notations with an auxiliary lemma.
\paragraph{\boldmath Operator $ \mathcal{A} $.}
We define a set-valued operator $ \mathcal{A} \subset H \times H $ by letting:
\begin{align}\label{Op.A}
(u, v) \in H \mapsto \mathcal{A}(u, v) := \left\{ \begin{array}{l|l}
(\xi, \zeta) \in H & \parbox{6.5cm}{
$ (\xi, \zeta) $ as in 
condition (b3), where 
    $ z \in X_2(\Omega) $ satisfies conditions (b1)--(b2) in Theorem \ref{Prop.rep_subd}
}
\end{array} \right\},
\end{align}
and we denote by $ D(\mathcal{A}) $ the domain of this operator, i.e. 
\begin{equation*}
D(\mathcal{A}) := \left\{ \begin{array}{l|l}
(\tilde{u}, \tilde{v}) \in H & \mathcal{A}(\tilde{u}, \tilde{v}) \ne \emptyset
\end{array} \right\}.
\end{equation*}

\paragraph{\boldmath Relaxed convex function $ E_\varepsilon(u, v) $.}
We define a sequence $ \{ E_\varepsilon \}_{\varepsilon > 0} $ of lower semi-continuous, convex functionals $E_\varepsilon : H \longrightarrow [0, \infty]$, 
by letting for any $ \varepsilon > 0 $,
\begin{equation}\label{2dfE_eps}
\begin{array}{rl}

E_\varepsilon (u, v)& := \left\{ \begin{array}{ll}
\multicolumn{2}{l}{\displaystyle \int_\Omega \sqrt{|\nabla u|^2 +\varepsilon^2}\,dx +\frac{\varepsilon^2}{2} \int_\Omega |\nabla u|^2 \, dx,} 
\\[2ex]
& \mbox{if $ u \in H^1(\Omega) $ and $ \gamma u  = v $ a.e. on $ \Gamma $,}
\\[2ex]
\infty, & \mbox{otherwise\ryota{,}}
\end{array} \right.
\end{array}
\end{equation}
Note that for every $ \varepsilon > 0 $, $ E_\varepsilon $ are proper 
on $ H $. Also, 
\begin{equation}\label{D(E_eps)}
D(E_\varepsilon) = V := \left\{ \begin{array}{l|l}
(\tilde{u}, \tilde{v}) \in \ryota{H^1(\Omega) \times H^1(\Gamma)} & \parbox{3cm}{
$ \gamma \tilde{u}   = \tilde{v} $ a.e. on $ \Gamma $
}
\end{array} \right\}, \mbox{ for $ \varepsilon > 0 $,}
\end{equation}
namely, the effective domains $ D(E_\varepsilon) $, for $ \varepsilon > 0 $, are equal to 
a closed linear subspace $ V $ in $ H^1(\Omega) \times H^1(\Gamma) $. The equality 
in \eqref{D(E_eps)} is essential to guarantee the lower semi-continuity of the convex functions $ E_\varepsilon $, for $ \varepsilon > 0 $. 

\begin{lemma}\label{axLem01}
Let us fix any constant $ \varepsilon > 0 $, and let us set:
\begin{equation*}
D_\varepsilon := \left\{ \begin{array}{l|l}
(\tilde{u}, \tilde{v}) \in D(E_\varepsilon) & \parbox{7.5cm}{
$ \frac{\nabla \tilde{u}}{\sqrt{|\nabla \tilde{u}|^2 +\varepsilon^2}} +\varepsilon^2 \nabla \tilde{u} \in L_{\mathrm{div}}^2(\Omega, \mathbb{R}^N) $, 
\\[1ex]
$ \bigl[ \bigl( \frac{\nabla \tilde{u}}{\sqrt{|\nabla \tilde{u}|^2 +\varepsilon^2}} +\varepsilon^2 \nabla \tilde{u} \bigr) \cdot \nu \bigr] \in L^2(\partial \Omega) $, and 
\\[1ex]
$ \bigl[ \bigl( \frac{\nabla \tilde{u}}{\sqrt{|\nabla \tilde{u}|^2 +\varepsilon^2}} +\varepsilon^2 \nabla \tilde{u} \bigr) \cdot \nu \bigr] = 0 $ a.e. on $ \partial \Omega \setminus \Gamma $
}
\end{array} \right\}.
\end{equation*}
Then, the subdifferential $ \partial E_\varepsilon \subset H \times H $ coincides with a single-valued operator $ \mathcal{A}_\varepsilon \subset H \times H $, defined as follows:
\begin{equation}\label{A_eps}
(u, v) \in D_\varepsilon \subset H \mapsto \mathcal{A}_\varepsilon (u, v) := \rule{0pt}{24pt}^{\mathrm{t}} \hspace{-1ex} \left( \begin{array}{c}
-\di  \bigl( \frac{\nabla u}{\sqrt{|\nabla u|^2 +\varepsilon^2}} +\varepsilon^2 \nabla u \bigr)
\\[2ex]
 \bigl[ \bigl( \frac{\nabla u}{\sqrt{|\nabla u|^2 +\varepsilon^2}} +\varepsilon^2 \nabla u \bigr) \cdot \nu \bigr]
\end{array} \right) \in H.
\end{equation}
\end{lemma}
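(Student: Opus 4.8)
The plan is to identify $\partial E_\varepsilon$ with $\mathcal{A}_\varepsilon$ by establishing the two-sided inclusion, combining a direct convexity estimate (for ``$\subseteq$'') with surjectivity of the resolvent (for ``$\supseteq$''). Throughout, write $f_\varepsilon(p) := \sqrt{|p|^2+\varepsilon^2} + \frac{\varepsilon^2}{2}|p|^2$ for $p \in \mathbb{R}^N$, so that $f_\varepsilon$ is smooth and strictly convex with gradient $\nabla f_\varepsilon(p) = \frac{p}{\sqrt{|p|^2+\varepsilon^2}} + \varepsilon^2 p$; for $(u,v) \in D_\varepsilon$ set $z := \frac{\nabla u}{\sqrt{|\nabla u|^2+\varepsilon^2}} + \varepsilon^2\nabla u = \nabla f_\varepsilon(\nabla u)$, so that, by definition of $D_\varepsilon$, $z \in L^2_{\mathrm{div}}(\Omega,\mathbb{R}^N)$ and $\mathcal{A}_\varepsilon(u,v) = (-\di z,\, [z\cdot\nu])$.

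First I would prove $\mathcal{A}_\varepsilon \subseteq \partial E_\varepsilon$. Fix $(u,v)\in D_\varepsilon$ and take an arbitrary $(w,\hat v)\in H$; if $(w,\hat v)\notin D(E_\varepsilon)=V$ the subdifferential inequality is trivial, so assume $w\in H^1(\Omega)$ and $\hat v = \gamma w$ on $\Gamma$. Pointwise convexity of $f_\varepsilon$ gives $f_\varepsilon(\nabla w)-f_\varepsilon(\nabla u)\ge \nabla f_\varepsilon(\nabla u)\cdot(\nabla w-\nabla u)=z\cdot\nabla(w-u)$ a.e.\ in $\Omega$; integrating and applying the Green formula for $z \in L^2_{\mathrm{div}}(\Omega,\mathbb{R}^N)$ paired with $w-u\in H^1(\Omega)$ (the $H^1$-analogue of \eqref{f1-3}) yields
\[
E_\varepsilon(w,\hat v)-E_\varepsilon(u,v) \ge -\int_\Omega \di z\,(w-u)\,dx + \int_{\partial\Omega}[z\cdot\nu]\,\gamma(w-u)\,d\cH^{N-1}.
\]
Since $[z\cdot\nu]=0$ on $\partial\Omega\setminus\Gamma$ and $\gamma(w-u)=\hat v - v$ on $\Gamma$ (because $(u,v),(w,\hat v)\in V$), the right-hand side equals $(\mathcal{A}_\varepsilon(u,v),(w,\hat v)-(u,v))_H$; hence $\mathcal{A}_\varepsilon(u,v)\in\partial E_\varepsilon(u,v)$.

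Next I would prove $R(I+\mathcal{A}_\varepsilon)=H$. Given $(f,g)\in H$, I minimize the strictly convex, coercive, l.s.c.\ functional $J(w,\hat v):=E_\varepsilon(w,\hat v)+\tfrac12\|(w,\hat v)-(f,g)\|_H^2$ over $V$; coercivity follows from the term $\tfrac{\varepsilon^2}{2}\|\nabla w\|_{L^2(\Omega)}^2$ together with the quadratic penalty, and existence follows by the direct method, using that a minimizing sequence is bounded in $H^1(\Omega)$ and that the trace $\gamma$ is compact into $L^2(\Gamma)$, so the constraint $\hat v=\gamma w$ persists in the limit; uniqueness comes from strict convexity. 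Writing the Euler--Lagrange equation for the minimizer $(u,v)$, first with $\phi\in C_c^\infty(\Omega)$ gives $\di z = u-f$ in $\mathcal{D}'(\Omega)$, whence $z\in L^2_{\mathrm{div}}(\Omega,\mathbb{R}^N)$; inserting this into the identity for general $\phi\in H^1(\Omega)$ and integrating by parts isolates the boundary relation $\int_{\partial\Omega}[z\cdot\nu]\gamma\phi\,d\cH^{N-1}+\int_\Gamma(\gamma u-g)\gamma\phi\,d\cH^{N-1}=0$, which forces $[z\cdot\nu]=g-\gamma u\in L^2(\Gamma)$ on $\Gamma$ and $[z\cdot\nu]=0$ on $\partial\Omega\setminus\Gamma$. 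Thus $(u,v)\in D_\varepsilon$ and $(u,v)+\mathcal{A}_\varepsilon(u,v)=(f,g)$.

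Finally I would combine the two facts. Since $\partial E_\varepsilon$ is maximal monotone, its resolvent $(I+\partial E_\varepsilon)^{-1}$ is single-valued; so given $(\xi,\zeta)\in\partial E_\varepsilon(u,v)$, applying the surjectivity from the previous step to $(f,g):=(u,v)+(\xi,\zeta)$ produces some $(\tilde u,\tilde v)\in D_\varepsilon$ with $(\tilde u,\tilde v)+\mathcal{A}_\varepsilon(\tilde u,\tilde v)=(f,g)$; as $\mathcal{A}_\varepsilon\subseteq\partial E_\varepsilon$, this $(\tilde u,\tilde v)$ is also a resolvent value of $(f,g)$, hence $(\tilde u,\tilde v)=(u,v)$ and $(\xi,\zeta)=\mathcal{A}_\varepsilon(u,v)$, giving $\partial E_\varepsilon\subseteq\mathcal{A}_\varepsilon$ and therefore $\partial E_\varepsilon=\mathcal{A}_\varepsilon$, with the latter single-valued by construction. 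I expect the main obstacle to be the careful handling of the boundary terms: because $z$ need not lie in $L^\infty(\Omega,\mathbb{R}^N)$, the Anzellotti pairing of \eqref{f1-3} is not directly applicable, so one must work with the $L^2_{\mathrm{div}}$--$H^1$ Green formula and verify that the normal trace, a priori only in $H^{-1/2}(\partial\Omega)$, is a genuine $L^2$ function splitting correctly across $\Gamma$ and $\partial\Omega\setminus\Gamma$ — precisely the regularity encoded in the definition of $D_\varepsilon$.
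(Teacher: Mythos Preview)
Your proof is correct. The paper itself gives no details for this lemma, simply referring to ``standard variational methods'' and citing Barbu and Ekeland--T\'emam; the commented-out argument the authors originally drafted (visible in the source) proceeds more directly than you do: after establishing $\mathcal{A}_\varepsilon \subseteq \partial E_\varepsilon$ just as you do, for the reverse inclusion they exploit the smoothness of $f_\varepsilon$ to compute the directional derivative of $E_\varepsilon$ at $(u,v)$ along $(\varphi,\psi)\in V$ by dividing the subdifferential inequality by $\delta$ and letting $\delta\to 0$, which immediately yields the variational identity $\int_\Omega z\cdot\nabla\varphi\,dx=(\xi,\varphi)_\Omega+(\zeta,\psi)_\Gamma$ and hence the PDE and boundary conditions.

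Your route via surjectivity of $I+\mathcal{A}_\varepsilon$ and Minty's theorem is slightly heavier machinery here---since $E_\varepsilon$ is G\^ateaux differentiable on its effective domain $V$, no auxiliary minimization is really needed---but it has the virtue of being exactly the template the paper later uses for the non-smooth functional $E$ in Theorem~\ref{Prop.rep_subd}, so you have effectively rehearsed that argument in a simpler setting. Your closing remark about the boundary trace is well taken: this is precisely why the definition of $D_\varepsilon$ postulates $[z\cdot\nu]\in L^2(\partial\Omega)$ rather than merely $H^{-1/2}$, and it is the content of the Euler--Lagrange step that this regularity is automatically recovered.
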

\begin{proof}
This lemma can be obtained as a straightforward consequence of standard variational methods (cf.\cite{MR2582280,MR1727362}). 
\end{proof}
\begin{proof}[Proof of Theorem \ref{Prop.rep_subd}]
With the use of the operator $ \mathcal{A} $ given by \eqref{Op.A}, the conclusion of the proposition can be rephrased as follows:
\begin{equation}\label{rep.subd010}
\mathcal{A} = \partial E \mbox{ in $ H \times H $.}
\end{equation}
We check this equality with the help of the 
following two Claims $\sharp1$--$\sharp2$.

\paragraph{\textmd{\textit{Claim $\mathit{\sharp1}$: $ {\mathcal{A} \subset \partial E} $ in $ H  \times H $.}}}
Let us assume that:
\begin{equation*}
(\xi, \zeta) \in \mathcal{A}(u, v) \mbox{ in $ H $}  \mbox{ with } (u, v) \in D(\mathcal{A}).
\end{equation*}
Then, in the light of \eqref{f1-3} and \eqref{Op.A}, we can see that for any $ (\varphi, \psi) \in D(E) $ we have:
\begin{align*}
\bigl( (\xi,\, & \zeta), (\varphi, \psi) -(u, v) \bigr)
\\[1ex]
& = \int_\Omega -\di z \, (\varphi -u) \, dx +\int_\Gamma [z \cdot \nu] (\psi -v) \, d \mathcal{H}^{N -1}
\\[1ex]
& = \int_\Omega (z, D(\varphi -u)) -\int_{\partial \Omega} [z \cdot \nu]  \gamma(\varphi -u) \, d\mathcal{H}^{N -1} +\int_\Gamma [z \cdot \nu] (\psi -v) \, d\mathcal{H}^{N -1}
\\[1ex]
& \leq \|z\|_{\infty} \int_\Omega |D \varphi| -\int_\Omega |Du| +\int_\Gamma \bigl( |\gamma \varphi -\psi| -|\gamma u -v| \bigr) \, d \mathcal{H}^{N -1}
\\[1ex]
& \leq E(\varphi, \psi) -E(u, v).
\end{align*}
Thus, $ (\xi, \zeta) \in \partial E(u, v) $ in $ H $. 

\paragraph{\textmd{\textit{Claim $\mathit{\sharp2}$: $ {(\mathcal{A} +\mathcal{I}_H)H = H} $.}}}
Since the inclusion $ (\mathcal{A} +\mathcal{I}_H)H \subset H $ is trivial, our task can be reduced to show only the converse one. 

Let us fix any $ (f, g) \in H $. Then, applying Minty's theorem and Lemma \ref{axLem01}, we can find a sequence of functions $ \{ (u_\varepsilon, v_\varepsilon) \}_{\varepsilon > 0} \subset V $ such that:
\begin{equation}\label{rep_subd020}
(f, g) -(u_\varepsilon, v_\varepsilon) \in \partial E_\varepsilon(u_\varepsilon, v_\varepsilon) \mbox{ in $ H $, for all $ \varepsilon > 0 $.}
\end{equation}
Here, with \eqref{2dfE_eps} and Lemma \ref{axLem01} in mind, we multiply the both sides of \eqref{rep_subd020} by $ (u_\varepsilon, v_\varepsilon) $. Then, by using Young's inequality, one can immediately see that:
\begin{align}\label{rep_subd030}
	\frac{1}{2} \bigl\| (u_\varepsilon, v_\varepsilon) \bigr\|_{H}^2 +E_\varepsilon(u_\varepsilon, v_\varepsilon) & \ryota{\leq \frac{1}{2} \bigl\| (f, g) \bigr\|_{H}^2 +E_\varepsilon(0, 0)}
\nonumber
\\
& \leq \frac{1}{2} \bigl\| (f, g) \bigr\|_{H}^2 +\varepsilon \mathcal{L}^N(\Omega), \mbox{ for all $ \varepsilon > 0 $.}
\end{align}
Subsequently, invoking \eqref{2dfE}, \eqref{2dfE_eps}, \eqref{rep_subd030} and the compactness theorem of Rellich--Kondrashov type, we can find an approximating limit $ (u, v) \in D(E) $ together with a sequence $ \{ \varepsilon_n \}_{n = 1}^\infty \subset (0, 1) $ and a sequence of functions $ \{ (u_n, v_n) \}_{n = 1}^\infty := \{ (u_{\varepsilon_n}, v_{\varepsilon_n}) \}_{n = 1}^{\infty} \subset V $, 
such that:
\begin{equation}\label{rep_subd040}
1 > \varepsilon_1 > \varepsilon_2 > \cdots > \varepsilon_n \downarrow 0 \mbox{ as $ n \to \infty $,}
\end{equation}
\begin{equation}\label{rep_subd050}
\begin{cases}
(u_n, v_n) \to (u, v) \mbox{ weakly in $ H $,}
\\
u_n \to u \mbox{ in $ L^1(\Omega) $,}
\\
\varepsilon_n u_n \to 0 \mbox{ weakly in $ H^1(\Omega) $,}
\end{cases}
\mbox{as $ n \to \infty $,}
\end{equation}
and for any $ (\varphi, \psi) \in V $, 
\begin{align}\label{rep_subd060}
\int_\Omega {\textstyle \frac{\nabla u_n}{\sqrt{|\nabla u_n|^2 +\varepsilon_n^2}}} & \cdot \nabla \varphi \, dx +\int_\Omega \nabla (\varepsilon_n u_n) \cdot \nabla (\varepsilon_n \varphi) \, dx 
\nonumber
\\
& \qquad = \int_\Omega (f -u_n) \varphi \, dx +
\int_\Gamma (g -v_n) \psi \, d \mathcal{H}^{N -1},\quad n\in \mathbb{N}.
\end{align}
Additionally, since
\begin{equation*}
{\textstyle
\bigl| \frac{\nabla u_n}{\sqrt{|\nabla u_n|^2 +\varepsilon_n^2}} \bigr| \leq 1 \mbox{ a.e. in $ \Omega $, for $ n = 1, 2, 3, \dots $,}
}
\end{equation*}
we may assume existence of
a vector field $ z \in L^\infty(\Omega, \mathbb{R}^N) $ such that
\begin{equation}\label{rep_subd070}
\begin{array}{c}
    {\textstyle \frac{\nabla u_n}{\sqrt{|\nabla u_n|^2 +\varepsilon_n^2}}} \to z \mbox{ weakly-$*$ in $ L^\infty(\Omega, \mathbb{R}^N) $ as $ n \to \infty $,}
\\[1ex]
\mbox{ and } |z| \leq 1 \mbox{ a.e. in $ \Omega $,}
\end{array}
\end{equation}
by taking an additional 
subsequence, if necessary.\bigskip

Now, applying the convergences \eqref{rep_subd040}--\eqref{rep_subd050} and \eqref{rep_subd070} to the variational form \eqref{rep_subd060}, we can see that for any $ (\varphi, \psi) \in V $, we have: 
\begin{equation}\label{rep_subd080}
\begin{array}{c}
\displaystyle 
\int_\Omega z \cdot \nabla \varphi \, dx = \int_\Omega (f -u) \varphi \, dx +
\int_\Gamma (g -v) \psi \, d \mathcal{H}^{N -1}.
\end{array}
\end{equation}
In particular, taking any $ \varphi_0 \in H_0^1(\Omega) $ and putting $ (\varphi, \psi) = (\varphi_0, 0) \in V $ in  \eqref{rep_subd080}, we have:
\begin{equation*}
\int_\Omega z \cdot \nabla \varphi_0 \, dx = \int_\Omega (f -u) \varphi_0 \, dx, \mbox{ for any $ \varphi_0 \in H_0^1(\Omega) $,}
\end{equation*}
i.e.
\begin{equation}\label{rep_subd090}
-\di z = f -u \in L^2(\Omega) \mbox{ in $ H^{-1}(\Omega) $.}
\end{equation}
Subsequently, for any $ \tilde{\psi} \in H^1(\partial \Omega) $, we invoke \cite[Proposition 5.6.3]{MR2192832} with the $C^1$-smoothness of $ \partial \Omega $ to take an extension $ \tilde{\psi}^\mathrm{ex} \in H^1(\Omega) $ of $ \tilde{\psi} $. Then, putting $ (\varphi, \psi) = (\tilde{\psi}^{\mathrm{ex}}, \tilde{\psi}) \in V $ in \eqref{rep_subd080}, we deduce 
from \eqref{rep_subd090} that 
for any $ \tilde{\psi} \in H^1(\partial \Omega) $, we have,
\begin{equation*}
\begin{array}{c}
\dps\int_{\partial \Omega} [z \cdot \nu] \tilde{\psi} \, d \mathcal{H}^{N -1} =  \int_\Gamma (g -v) \tilde{\psi} \, d \mathcal{H}^{N -1}  =  \int_{\partial \Omega} [g -v]_0^\mathrm{ex} \tilde{\psi} \, d \mathcal{H}^{N -1},
\end{array}
\end{equation*}
where $ [g -v]_0^\mathrm{ex} \in L^2(\partial \Omega) $ is the zero-extension of $ g -v \in L^2(\Gamma) $. This implies that, 
\begin{equation}\label{rep_subd100}
[z \cdot \nu](y) = \left\{ \begin{array}{l}
 (g -v)(y), \mbox{ if $ y \in \Gamma $,}
\\[1ex]
0, \mbox{ if $ y \in \partial \Omega \setminus \Gamma $,}
\end{array} \right. \mbox{for a.e. $ y \in \partial \Omega $.}
\end{equation}

Finally, we take any $ (\tilde{\varphi}, \tilde{\psi}) \in V $, and put $ (\varphi, \psi) = (u_n -\tilde{\varphi}, v_n -\tilde{\psi}) \in V $ in \eqref{rep_subd060} to obtain that: 
\begin{align}
  & E_{\varepsilon_n}(u_n, v_n)  +\int_\Omega (u_n -f)(u_n -\tilde{\varphi}) \, dx + \int_\Gamma (v_n -g)(v_n -\tilde{\psi}) \, d \mathcal{H}^{N -1}
\nonumber
\\[1ex]
& \quad \leq \int_\Omega {\txs \frac{\nabla u_n}{\sqrt{|\nabla u_n|^2 +\varepsilon_n^2}}} \cdot \nabla \tilde{\varphi} \, dx +\frac{\varepsilon_n^2}{2} \int_\omega |\nabla \tilde{\varphi}|^2 \, dx + E_{\varepsilon_n}(0, 0)
\nonumber
\\[1ex]
& \ryota{ \quad = \int_\omega {\txs \frac{\nabla u_n}{\sqrt{|\nabla u_n|^2 +\varepsilon_n^2}}} \cdot \nabla \tilde{\varphi} \, dx +\frac{\varepsilon_n^2}{2} \int_\omega |\nabla \tilde{\varphi}|^2 \, dx +\varepsilon_n \mathcal{L}^n(\omega), \mbox{ for $ n = 1, 2, 3, \dots $.}}
\label{rep_subd120}
\end{align}
Here, having in mind \eqref{2dfE}, \eqref{2dfE_eps}, \eqref{rep_subd040}--\eqref{rep_subd050},  \eqref{rep_subd070} and the weak lower semi-continuity of $ E $ on $ H $, let us take the limit-inf of both sides of \eqref{rep_subd120}. Then, we compute, 
\begin{align}
E(u, v) & +\int_\Omega (u -f)(u -\tilde{\varphi}) \, dx + \int_\Gamma (v -g)(v -\tilde{\psi}) \, d \mathcal{H}^{N -1}\nonumber
\\[1ex]
\leq & \varliminf_{n \to \infty} E_{\varepsilon_n}(u_n, v_n) +\varliminf_{n \to \infty} \int_\Omega  (u_n -f)(u_n -\tilde{\varphi}) \, dx
+\varliminf_{n \to \infty}  \int_\Gamma (v_n -g)(v_n -\tilde{\psi}) \, d \mathcal{H}^{N -1}\nonumber
\\[1ex]
\leq & \varliminf_{n \to \infty} \left( E_{\varepsilon_n}(u_n, v_n) -\int_\Omega  (f -u_n)(u_n -\tilde{\varphi}) \, dx 
-\int_\Gamma  (g -v_n)(v_n -\tilde{\psi}) \, d \mathcal{H}^{N -1} \right)\nonumber
\\[1ex]
\leq & \int_\Omega z \cdot \nabla \tilde{\varphi} \, dx.\nonumber
\end{align}
\ryota{Therefore, for any $(\tilde{\varphi},\tilde{\psi}) \in V$, we have,
\begin{equation}\label{rep_subd125}
\begin{array}{lc}
\multicolumn{2}{l}{\dps\int_\Omega |Du| + \int_\Gamma |\gamma u - v|\, d\mathcal{H}^{N-1}}
\\[2ex]
& \dps\leq \int_\Omega z \cdot \nabla \tilde{\varphi} \,dx + \int_\Omega (f - u)(u - \tilde{\varphi}) \,dx +  \int_\Gamma (g-v)( v -\tilde{\psi})\, d\mathcal{H}^{N-1}.
\end{array}
\end{equation}
}
Additionally, by applying \ryota{\eqref{f1-3}--\eqref{f1-4}}, \eqref{rep_subd090}--\eqref{rep_subd100} \ryota{and \eqref{rep_subd125}}, we can deduce that:
\begin{align}
\int_\Omega & |Du|  +\int_\Gamma |\gamma u -v| \, d\mathcal{H}^{N -1}
    = \int_\Omega |\nabla u| \, dx +\int_\Omega \ryota{|D u^s|}  +\int_\Gamma |\gamma u -v| \, d\mathcal{H}^{N -1}
\nonumber
\\
& \ryota{\leq -\int_\Omega \di z \, \tilde{\varphi} \, dx + \int_{\partial \Omega} [z \cdot \nu]\gamma \tilde{\varphi} \,d\mathcal{H}^{N-1}} \nonumber
\\
& \qquad \ryota{+ \int_\Omega (-\di z) (u -\tilde{\varphi}) \, dx +\int_{\Gamma} [z \cdot \nu] (v- \tilde{\psi}) \, d \mathcal{H}^{N -1}} \nonumber
\\
& \ryota{= -\int_\Omega \di z \, u \, dx +\int_\Gamma [z \cdot \nu] v \, d \mathcal{H}^{N -1}}
\nonumber
\\
& = \int_\Omega (z, Du) -\int_{\partial \Omega} [z \cdot \nu] \gamma u \, d \mathcal{H}^{N -1} +\int_\Gamma [z \cdot \nu] v \, d \mathcal{H}^{N -1}
\nonumber
\\
&= \int_\Omega {\textstyle \frac{(z, D u)}{|Du|}} \, |Du| +\int_\Gamma -[z \cdot \nu](\gamma u -v) \, d \mathcal{H}^{N -1}.
\nonumber
\\
&= \int_\Omega z \cdot \nabla u \, dx +\int_\Omega \ryota{(z, D u)^s} +\int_\Gamma -[z \cdot \nu](\gamma u -v) \, d \mathcal{H}^{N -1}. \label{rep_subd130}
\end{align}
In the meantime, from \eqref{f1-1}--\eqref{f1-2}, \eqref{f1-4} and \eqref{rep_subd070}, we can easily check that:
\begin{equation}\label{rep_subd140}
\begin{cases}
    \bigl| \frac{(z, Du)}{|Du|} \bigr| \leq \| z \|_{\infty} \leq 1, \mbox{ $ |Du| $-a.e. in $ \Omega $,} 
\\[2ex]
\quad \mbox{with } \dps \left| \int_\Omega z \cdot \nabla u \, dx \right| \leq \| z \|_{\infty} \int_\Omega |\nabla u| \, dx \leq  \int_\Omega |\nabla u| \, dx,
\\[2ex]
\quad \mbox{and } \dps \left| \int_\Omega \ryota{(z, D u)^s}  \right| \leq  \int_\Omega {\textstyle \bigl| \frac{(z, Du)}{|Du|} \bigr|} \, \ryota{|D u^s|} \leq  \int_\Omega \ryota{|D u^s|},
\end{cases}
\end{equation}
and
\begin{equation}\label{rep_subd145}
|-[z \cdot \nu]| \leq  \| z \|_{\infty} \leq 1, \mbox{ a.e. on $ \partial \Omega $.}
\end{equation}
As a consequence from \eqref{rep_subd130}--\eqref{rep_subd145}, it is inferred that: 

\begin{equation}\label{rep_subd150}
\begin{cases}
\frac{(z, D u)}{|D u|} = 1, \mbox{ $ |D u| $-a.e. in $ \Omega $, and in particular,}
\\[0ex]
\hspace{5.75ex} z \cdot \nabla u = |\nabla u|, \mbox{ and } z \in \Sgn^N(\nabla u), \mbox{ a.e. in $ \Omega $,}
\\[1ex]
- 
[z \cdot \nu](\gamma u -v) = |\gamma u -v|, 
\\[0ex]
\hspace{5.75ex}\mbox{ i.e. } -
[z\cdot \nu] \in \Sgn(\gamma u -v), \mbox{ a.e. on $ \Gamma $.}
\end{cases}
\end{equation}
Taking into account \eqref{Op.A}, \eqref{rep_subd090}--\eqref{rep_subd100} and \eqref{rep_subd150}, we infer that:
\begin{equation*}
\begin{array}{c}
(f -u, g-v) \in \mathcal{A}(u, v), \mbox{ in $ H $,} 
\\[1ex]
\mbox{i.e. } (\mathcal{A} +\mathcal{I}_H)(u, v) \ni (f, g) \mbox{ in $ H $, with } (u, v) \in D(\mathcal{A}).
\end{array}
\end{equation*}
Indeed, 
Claim $\sharp 2$ follows.

\bigskip

Now, the rephrased conclusion \eqref{rep.subd010} will be obtained by applying Minty's theorem to $ \mathcal{A} $, and by using the maximality of the monotone graph $ \mathcal{A} \subset \partial E $ in $ H \times H $.
\end{proof}

We have just characterized $\partial E$, the subdifferential of $E$ with respect to the standard inner product of $H$. This corresponds to eq.
(\ref{rn1}) with $\tau=1$. We would like to establish the relationship between $\partial E$  and $\partial_\tau E$, i.e. the subdifferential of $E$ with respect to the inner product $(\cdot, \cdot)_\tau$ in $H$. Thus, we could study (\ref{rn1}) for any positive  $\tau$. Here is our observation.

\begin{cor}
    Let $ \tau > 0 $, and $ U = (u, v) \in H $. Then, the domain $ D(\partial E) = D(\mathcal{A}) $ coincides with the domain $ D(\partial_\tau E) $ of the subdifferential $ \partial_\tau E $ of $E$ with respect to the scalar product $(\cdot,\cdot)_\tau$ in $H$, and $(\xi,\zeta) \in \partial_\tau E(U) $ if and only if $ (\xi, \tau \zeta) \in \partial E(U) $ in $ H $.
\end{cor}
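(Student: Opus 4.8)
The plan is to exploit the fact that passing from the standard inner product to $(\cdot,\cdot)_\tau$ amounts merely to a rescaling of the boundary factor, which can be encoded by a bounded linear bijection of $H$. Concretely, I would introduce the map $T_\tau : H \to H$, $T_\tau(\xi,\zeta) := (\xi, \tau\zeta)$, and record the intertwining identity
\[
\bigl((\xi,\zeta),\, W\bigr)_\tau = \bigl(T_\tau(\xi,\zeta),\, W\bigr) \quad \text{for all } W \in H,
\]
which follows at once from the two definitions: the $\Omega$-integrals coincide, while the $\Gamma$-integral carries the factor $\tau$ in either reading, since $\tau \int_\Gamma \zeta w_2 \, d\mathcal{H}^{N-1} = \int_\Gamma (\tau\zeta) w_2 \, d\mathcal{H}^{N-1}$ for $W = (w_1,w_2)$.

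With this identity in hand the equivalence is immediate. By definition $(\xi,\zeta) \in \partial_\tau E(U)$ means
\[
\bigl((\xi,\zeta),\, W - U\bigr)_\tau \le E(W) - E(U) \quad \text{for all } W \in D(E),
\]
and rewriting the left-hand side via the intertwining identity (applied to the vector $W - U \in H$) turns this into
\[
\bigl((\xi,\tau\zeta),\, W - U\bigr) \le E(W) - E(U) \quad \text{for all } W \in D(E),
\]
which is precisely the assertion $(\xi,\tau\zeta) \in \partial E(U)$ with respect to the standard inner product. Since every step is a genuine equality that can be read in either direction, running the argument both ways yields the claimed equivalence.

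For the domain statement I would observe that $T_\tau$ is a linear bijection of $H$, with inverse $(\xi,\zeta) \mapsto (\xi, \tau^{-1}\zeta)$, so the equivalence just proved is the set identity $\partial_\tau E(U) = T_\tau^{-1}\bigl(\partial E(U)\bigr)$. Consequently $\partial_\tau E(U)$ is nonempty exactly when $\partial E(U)$ is, giving $D(\partial_\tau E) = D(\partial E)$, and the latter equals $D(\mathcal{A})$ by Theorem \ref{Prop.rep_subd}. There is essentially no obstacle here; the only point meriting a word of care is that the defining variational inequalities are tested over the same effective domain $D(E)$ for both inner products, so the rescaling leaves the test set untouched and the whole corollary reduces to the elementary intertwining identity.
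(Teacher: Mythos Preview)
Your proof is correct and follows exactly the same approach as the paper: both rest entirely on the elementary identity $\bigl((\xi,\zeta),(h_1,h_2)\bigr)_\tau = \bigl((\xi,\tau\zeta),(h_1,h_2)\bigr)$, from which the equivalence of the two subdifferential inequalities and the coincidence of domains follow immediately. You have simply spelled out the details (the bijection $T_\tau$, the domain argument) that the paper leaves implicit in its one-line proof.
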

\noindent{\it Proof.}
We easily verify this lemma by using the following relationship
$$
    \bigl((\xi, \zeta), (h_1, h_2) \bigr)_\tau = \bigl( (\xi, \tau \zeta), (h_1, h_2) \bigr) \quad \mbox{for all $ (\xi, \zeta), (h_1, h_2) \in H $.}\eqno\qed
$$




\subsection{The canonical section}
Once we described the subdifferential, we may set up the minimization necessary to select the canonical section of $\d E(U)$. Here, we assume $\tau =1$, but we shall see later that this does not lead to any loss of generality, see Lemma \ref{le6.1}.

\begin{prop}\label{s5-pr-cannon}
    If $(\xi,\zeta)$ is the canonical selection of $\d E(U)$ with $ U = (u, v) \in D(\partial E) $, then:\\
(a) $(\xi,\zeta)=(-\di z, [z\cdot \nu])$, where $z$ is a  minimizer of 
        \begin{equation}\label{c1}
    \min\left\{ \begin{array}{l|l} 
        \cE(z) & 
            \parbox{6cm}{$ z\in X_2(\Omega) $, $ z \in \Sgn^N(\nabla u) $, a.e., \\ $ \frac{(z,Du)}{|Du|} =1 $, $ |Du| $-a.e., ~ and \\ $ -[z\cdot\nu] \in \Sgn(\gamma u - v) $, $ \mathcal{H}^{N -1} $-a.e.
            }
    \end{array} \right\}, 
        \end{equation}
where $X_2(\Omega)$ is defined in (\ref{def-Xp}) and
$$
\cE(z)=\int_\Omega |\di z |^2 \,dx + \int_\Gamma |[z\cdot\nu]|^2\,d\cH^{N-1}.
$$
Moreover, $\di z$ and $[z \cdot \nu]$ are determined uniquely.\\
(b) We assume that $z$ is a minimizer of (\ref{c1}), $ F_0 := \{ x\in \Omega: |z(x)| <1\} $ is open, we set  $F:=\bar F_0$. If the boundaries of $ F_0 $ and $F$ are equal and they are 
Lipschitz continuous, 
then 
$\di z = \lambda = \hbox{const}$ on $F$. \\
(c) If in addition to (b), we know that $|[z\cdot\nu]|< 1$ on $\Gamma_F:= F\cap \Gamma$, then
    $ \di z  = - [z\cdot\nu] ~(= \lambda)~$ on $\Gamma_F$.
\end{prop}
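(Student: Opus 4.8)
The statement characterises the canonical (minimal) section of $\partial E(U)$ as a vector field $z$ minimising the Dirichlet-type energy $\cE(z)$ over the admissible set defined by conditions (b1)--(b2), and then records two geometric consequences (b) and (c) valid on a facet $F$. My overall strategy is to exploit the fact, recorded in Theorem~\ref{Prop.rep_subd}, that every element of $\partial E(u,v)$ has the form $(-\di z,[z\cdot\nu])$ with $z$ ranging over the admissible set $\mathcal A(u,v)$, so the minimal section in the $H$-norm is precisely the one minimising
$$
\|(\xi,\zeta)\|_H^2 = \int_\Omega |\di z|^2\,dx + \int_\Gamma |[z\cdot\nu]|^2\,d\cH^{N-1} = \cE(z).
$$
For part (a), I would first invoke Theorem~\ref{Prop4.1}: since $\partial E$ is maximal monotone, $\partial E(U)$ is a nonempty closed convex subset of $H$, hence has a unique element of minimal norm, the canonical section. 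I would then argue that the map $z\mapsto(-\di z,[z\cdot\nu])$ sends the admissible set in \eqref{c1} onto $\partial E(U)$ (by Theorem~\ref{Prop.rep_subd}), so minimising $\cE(z)$ over the admissible set is the same problem, modulo the observation that two admissible $z$'s producing the same $(\di z,[z\cdot\nu])$ give the same value of $\cE$. Existence of a minimiser follows from the direct method: the admissible set is nonempty (as $U\in D(\partial E)$), $\cE$ is coercive and weakly lower semicontinuous in the relevant topology, and the constraints $z\in\Sgn^N(\nabla u)$, $\frac{(z,Du)}{|Du|}=1$, $-[z\cdot\nu]\in\Sgn(\gamma u-v)$ are closed under weak-$*$ $L^\infty$ convergence once one also controls $\di z$ in $L^2$ and the normal trace via \eqref{f1-1}. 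Uniqueness of $\di z$ and $[z\cdot\nu]$ (not of $z$ itself) is then inherited from the uniqueness of the minimal-norm element of the convex set $\partial E(U)$.

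**Parts (b) and (c).** For part (b) I would use the constancy of $\di z$ on the facet. On $F_0=\{|z|<1\}$ the constraint $z\in\Sgn^N(\nabla u)$ with $|z|<1$ forces $\nabla u=0$ a.e., so $F$ is a genuine flat piece (a facet) and $u$ is constant there. The claim $\di z\equiv\lambda$ on $F$ is then the standard facet argument: perform an inner variation of the minimiser of \eqref{c1} supported in $F$, i.e. replace $z$ by a competitor $z+t\,w$ with $w$ a divergence-controlled perturbation that keeps $|z+tw|\le1$ on $F$ (possible exactly because $|z|<1$ strictly on $F_0$, so small perturbations remain admissible) and leaves $z$ unchanged outside $\bar F$. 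The Euler--Lagrange inequality for the quadratic functional $\int_F|\di z|^2$ then yields $\int_F \di z\,\di w\,dx=0$ for all such $w$ with, say, $\int_F \di w =0$ and appropriate boundary behaviour, which forces $\di z$ to be $L^2$-orthogonal to all mean-zero divergences, hence constant on $F$. The Lipschitz regularity of $\partial F_0=\partial F$ is what makes this variational computation legitimate (integration by parts, trace theory on $F$). Part (c) refines this: if additionally $|[z\cdot\nu]|<1$ on $\Gamma_F$, the normal-trace constraint $-[z\cdot\nu]\in\Sgn(\gamma u-v)$ is likewise \emph{slack}, so one may also vary the boundary values of the competitor freely on $\Gamma_F$; combining the bulk and boundary Euler--Lagrange conditions (now testing against $w$ that do not vanish on $\Gamma_F$) ties the constant bulk value $\lambda=\di z$ to the boundary value $-[z\cdot\nu]$, giving $\di z = -[z\cdot\nu]=\lambda$ on $\Gamma_F$.

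**The main obstacle.** I expect the delicate point to be the admissibility of the variations in (b)--(c): the perturbation $w$ must simultaneously keep $z+tw\in X_2(\Omega)$ (so $\di(z+tw)\in L^2$ and the normal trace is controlled), preserve the pointwise sign constraints where they are \emph{active} (outside $F_0$, where $|z|=1$), and respect the compatibility of the divergence and normal-trace contributions through the integration-by-parts identity \eqref{f1-3}. Constructing a sufficiently rich family of such $w$ supported on $F$ (and not destroying the constraints on $\Gamma_F$ in part (c)) requires the Lipschitz regularity hypothesis on $\partial F$ and a careful use of the structure of $X_2(\Omega)$; this is where I anticipate the technical heart of the argument lies, and where the analogy with the calibration/Cheeger-set computations of \cite{mazon,andreu} has to be adapted to the present dynamic-boundary setting.
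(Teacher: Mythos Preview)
Your proposal is correct and follows essentially the same approach as the paper: part (a) comes directly from Theorem~\ref{Prop.rep_subd} together with the definition of the minimal section (the paper does not even spell out the direct-method argument you sketch, simply invoking strict convexity for uniqueness of $\di z$ and $[z\cdot\nu]$), and parts (b)--(c) are obtained exactly as you describe, by perturbing $z$ with a smooth vector field $\varpi$ supported where the constraints are slack, writing down the first-order optimality condition $\int_\Omega \di z\,\di\varpi\,dx + \int_{\partial\Omega}[z\cdot\nu][\varpi\cdot\nu]\,d\cH^{N-1}=0$, and reading off $\nabla\di z=0$ on $F_0$ (for (b)) and $\lambda + [z\cdot\nu]=0$ on $\Gamma_F$ (for (c)). The technical obstacle you flag --- admissibility of the perturbations --- is handled in the paper simply by taking $\varpi$ smooth with support in $F_0$ (respectively $F_0\cup\{|[z\cdot\nu]|<1\}$), so that $|z+t\varpi|\le 1$ for small $|t|$ is automatic.
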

\begin{remark}
In particular part (c) does not apply if $|[z\cdot\nu]|=1$ on $\Gamma_F$. Parts (b) and (c) provide a set of necessary conditions for $z$ to be a minimizer. Later, in Section \ref{42CC}, we will study this in greater detail as well as we will address the sufficient conditions, see Proposition \ref{Pcon}.
\end{remark}

\begin{proof}
Part (a) follows from Theorem \ref{Prop.rep_subd} and the definition of the canonical section. Uniqueness of  $\di z$ follows from strict convexity of the integrand.

In order to establish (b) and (c), we take any smooth vector field $\varpi$, having a support in the open set $ F_0 $, such that 
$ z+t \varpi \in \Sgn^N(\nabla u) $, i.e. $ |z +t \varpi| \leq 1 $, on $ \Omega $, for all $ t \in \mathbb{R} $ with sufficiently small $|t|$. Since $ -[(z +t\varpi) \cdot \nu] = -[z \cdot \nu] $ on $ \d\Omega $ for any $ t \in \mathbb{R} $ and $z$ is a minimizer of the above problem, the function $t \in \mathbb{R} \mapsto E(z+t\varpi) \in [0, \infty) $ has a critical point at $t=0$. On the other hand it is easy to compute 
$\frac d{dt} E(z+t \varpi) \bigl|_{t=0}$. Thus, we obtain,
\begin{equation}\label{rn-div}
    \int_\Omega \di z \, \di \varpi \,dx + \int_{\partial \Omega} [z\cdot\nu] [\varpi \cdot\nu] \, d \mathcal{H}^{N -1}=0.
\end{equation}
Now, we will complete (b). We notice that
(\ref{rn-div}) simplifies if vector field $v$ has a compact support contained in the interior of $F$. In this case, the boundary term drops out, so (\ref{rn-div}) takes the form,
    $$
    \int_{F_0} \di z \,\di \varpi \,dx =0.
    $$
The integration by parts yields
$\nabla \di z = 0$ in $F_0$. Additionally, since the boundary of $ F_0 $ is Lipschitz, we can say that $ \di z = \hbox{const} =: \lambda $ on $ F$ $ (= \bar{F}_0)$.

In order to deduce (c), we take a vector field $\varpi$ having the support in 
$ F_0 \cup \{x\in \d\Omega: \ |[z\cdot\nu]|< 1\}$, which is contained in $ F_0 \cup \Gamma_F$. Then, (\ref{rn-div}) takes the form,
\begin{eqnarray*}
0&=& \int_\Omega \di z \cdot \di \varpi \,dx + \int_{\partial \Omega} [z\cdot\nu] [\varpi\cdot\nu] \, d \cH^{N -1} 
\\
&=& \int_F \lambda \, \di \varpi \,dx + \int_{\Gamma_F} [z\cdot\nu] [\varpi\cdot\nu] \, d \cH^{N -1}
= \int_{\Gamma_F} (\lambda + [z\cdot\nu]) [\varpi\cdot\nu] \,d\cH^{N-1}.
\end{eqnarray*}
Since $[\varpi\cdot\nu]$ is arbitrary, we deduce that $\lambda + [z\cdot\nu] \equiv 0$ on $\Gamma_F$. Our claims follow.
\end{proof}

\begin{prop}\label{s5.2-pro5.3}
Let us suppose $\Omega\subset \bR^2$ and $\Omega$, $\Gamma$  have radial symmetry, $u_0$ and $v_0$ depend only on the radius $r$. Then, for all $t>0$, if $(-\di z, [z\cdot\nu])$ is the canonical section of $\partial E(U(t))$, then we can choose 
    $z$ of the form $z(x,t) = \varrho(r,t) \frac x r$,
where $r=|x|$. 
\end{prop}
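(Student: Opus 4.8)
The plan is to exploit the full orthogonal symmetry of the data and of the minimization defining the canonical section, and then to symmetrize an arbitrary minimizer by averaging over the group. Let $O(2)$ be the group of orthogonal maps leaving $\Omega$ and $\Gamma$ invariant, which by radial symmetry is all of $O(2)$; it acts on scalars by $u\mapsto u\circ R$ and on vector fields by $(T_R z)(x):=R^{-1}z(Rx)$. \emph{First I would show the solution stays radial.} Because $R\Omega=\Omega$ and $R\Gamma=\Gamma$, the change of variables $y=Rx$ shows that $E$ in (\ref{2dfE}) satisfies $E(u\circ R,v\circ R)=E(u,v)$, and that $(\cdot,\cdot)_\tau$ is $O(2)$-invariant. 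Hence the gradient flow of Theorem \ref{Thm4.4} is equivariant: if $U(t)=(u(t),v(t))$ solves (\ref{rn1}) from $U_0$, then $(u(t)\circ R, v(t)\circ R)$ solves (\ref{rn1}) from $U_0\circ R$. Since $u_0,v_0$ depend only on $r$ we have $U_0\circ R=U_0$, so the uniqueness in Theorem \ref{Thm4.4} forces $U(t)\circ R=U(t)$ for every $t>0$ and $R\in O(2)$; that is, $u(t)$ and $v(t)$ are radial.

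\emph{Next I would record the symmetry of the selection problem (\ref{c1}).} Fix $t>0$ and write $(u,v)=U(t)$. A direct computation using $R^{-1}=R^{\mathrm t}$ gives $\di(T_R z)=(\di z)\circ R$, while $\nu(Rx)=R\nu(x)$ on the radial boundary yields $[T_R z\cdot\nu]=[z\cdot\nu]\circ R$; the change of variables then gives $\cE(T_R z)=\cE(z)$. Moreover $T_R$ maps the constraint set of (\ref{c1}) onto itself: the identity $\nabla u(Rx)=R\,\nabla u(x)$ (valid since $u$ is radial) together with $\Sgn^N(Rw)=R\,\Sgn^N(w)$ preserves $z\in\Sgn^N(\nabla u)$; radiality of $\gamma u-v$ preserves $-[z\cdot\nu]\in\Sgn(\gamma u-v)$; and the pairing $(\,\cdot\,,Du)$ of (\ref{f1-3}) transforms naturally, so that $\frac{(z,Du)}{|Du|}=1$ is preserved because $|Du|$ is $O(2)$-invariant. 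In particular the constraint set is convex and $O(2)$-invariant.

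\emph{Finally I would average.} Let $z$ be any minimizer of (\ref{c1}) and set
\[
\bar z(x):=\int_{O(2)}(T_R z)(x)\,dR
\]
against the normalized Haar measure. By convexity of the constraint set and the previous step, $\bar z$ satisfies all the constraints; since $z\mapsto(\di z,[z\cdot\nu])$ is linear and $\cE$ is convex, Jensen's inequality together with $\cE(T_R z)=\cE(z)$ gives $\cE(\bar z)\le\cE(z)$, so $\bar z$ is also a minimizer. By Haar invariance $\bar z$ is $O(2)$-equivariant, i.e. $\bar z(Rx)=R\,\bar z(x)$ for all $R$. Writing $\bar z=a(r)\,x/r+b(r)\,x^{\perp}/r$ with $x^{\perp}:=(-x_2,x_1)$, invariance under rotations is automatic, whereas invariance under a reflection forces $b\equiv 0$. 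Hence $\bar z(x)=\varrho(r,t)\,x/r$, and by Proposition \ref{s5-pr-cannon}(a) the pair $(-\di\bar z,[\bar z\cdot\nu])$ is the uniquely determined canonical section of $\partial E(U(t))$, which is the desired conclusion.

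\emph{The main obstacle} is the naturality of the pairing $(\,\cdot\,,Du)$ under the $O(2)$-action asserted in the second step, together with the verification that $\bar z$ is a genuine element of $X_2(\Omega)$ with the claimed divergence and normal trace. Both require commuting the Haar integral with the nonlocal, measure-valued operators $z\mapsto\di z$ and $z\mapsto[z\cdot\nu]$ and checking that the integration-by-parts identity (\ref{f1-3}) is compatible with the change of variables $y=Rx$; this is where the measure-theoretic care concentrates, the remainder being formal symmetry bookkeeping.
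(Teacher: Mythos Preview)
Your argument is correct. Both you and the paper symmetrize a minimizer of (\ref{c1}) by Haar averaging, but the two routes diverge in how the tangential component is removed. The paper averages only over $SO(2)\cong S^1$ (referring to \cite{gr-ade10} for the details), which produces a rotation-equivariant field $z=\varrho(r)\,\bne_r+\psi(r)\,\bne_\varphi$; it then simply observes that the tangential piece $\psi(r)\,\bne_\varphi$ is divergence-free and orthogonal to $\nu$ on the radial boundary, so discarding it leaves $\di z$ and $[z\cdot\nu]$---and hence $\cE$---unchanged, while $|\varrho|\le|z|\le 1$ keeps the remaining constraints. You instead average over the full group $O(2)$ and let reflection equivariance kill the $\bne_\varphi$-component directly. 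Your route is a clean group-theoretic argument but requires verifying reflection invariance of every constraint (including the pairing condition $\frac{(z,Du)}{|Du|}=1$), which you rightly flag as the delicate point; the paper sidesteps that entirely with a two-line direct computation once the $SO(2)$-average is in hand.
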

\begin{proof} First, we notice that we can choose $z$ depending only on $r$. The argument is based on the averaging with respect to the Haar measure on $S^1$. The details are explained in \cite{gr-ade10}.

Thus, $z(x) = \varrho(r) \bne_r + \psi(r) \bne_\varphi$, where $\bne_r =\frac x r$ and $\bne_\varphi = (-x_2, x_1)/r$. We notice that $1\ge |z|^2 = \varrho^2 + \psi^2 \ge \varrho^2$. Moreover, 
$$
\di z = \di \varrho(r)\bne_r = \frac{(\varrho(r)r)'} r.
$$
In other words, the  part of $z$, tangential to circles $\d B(0,r)$, is divergence-free.

Thus, we may drop the tangential part of $z$, because it neither contributes to $\di z$, nor to the boundary trace.
\end{proof}
\begin{remark}
 Proposition \ref{s5.2-pro5.3} extends to radially symmetric domains in $\bR^N$ and the data with the same symmetry. Here $\di z = (\varrho(r)r^{N-1})'/r^{N-1}$ for general $N$.
\end{remark}

Finally, we can decide the form of the subdifferential in the one-dimensional case, but we restrict our attention to monotone initial condition $u_0$. We set $\chi= 1$, if $u_0$ is increasing and $\chi= -1$, if $u_0$ is decreasing. We notice that the outer normals $\nu$ to $\Gamma$ are in fact numbers, $\nu(0)= -1$ and $\nu(L) =1$.
\begin{prop}\label{s5prop5.5}
    Let us suppose that $U=(u,v)\in L^2(0,T; H)$ is a solution to (\ref{rn1}), where $\Omega = (0,L)$, $\Gamma = \partial\Omega$, $u_0$ is monotone. We denote by $(\xi,\zeta)$ the canonical selection of the subdifferential of $E$ at $U(t)$, $t>0$, i.e. $\xi = - z_x$, $\zeta (i)= z(i)\cdot \nu(i)$, where $i\in \Gamma =\{0,L\}$.
 Let us consider $[a,b]\subset (0,L)$. We assume that $u_x(a^+)$, $u_x(b^-)$ exist  and they are different from zero. Then,\\
 (a) $\Sgn ( u_x(a^+) )= \Sgn ( u_x(b^-) ) = \Sgn ( u_x(x^\pm) ) =\chi$ for all $x\in [a,b]$; \\
(b) if $\gamma u = v$ at $x\in \Gamma$, then $z(i)\nu(i) \in [-1,1]$, $i\in\Gamma$;\\
  (c) if $\gamma u \neq v$ at $x\in \Gamma$, then $|z(i)|=1$, $i\in\Gamma$.
  \end{prop}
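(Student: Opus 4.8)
The plan is to obtain all three assertions from the representation of the canonical section in Theorem~\ref{Prop.rep_subd}, specialised to $N=1$, once I have shown that the flow preserves the monotonicity of $u_0$. First I would record the one–dimensional form: by Theorem~\ref{Prop.rep_subd} the canonical section is $(\xi,\zeta)=(-z_x,[z\cdot\nu])$ with $z\in X_2((0,L))$ satisfying $z\in\Sgn(u_x)$ a.e.\ in $(0,L)$, $\frac{(z,Du)}{|Du|}=1$ $|Du|$–a.e., and $-[z\cdot\nu]\in\Sgn(\gamma u-v)$ on $\Gamma=\{0,L\}$. A useful preliminary observation is that in one dimension $z\in X_2((0,L))$ forces $z\in L^\infty(0,L)$ with $z_x\in L^2(0,L)$, so $z\in H^1(0,L)\hookrightarrow C([0,L])$; hence the endpoint values $z(0),z(L)$ are well defined and $[z\cdot\nu](i)=z(i)\,\nu(i)$ with $\nu(0)=-1$ and $\nu(L)=1$.

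The core of part (a) is the claim that $u(\cdot,t)$ remains monotone, in the direction fixed by $\chi$, for every $t>0$. I would derive this from the order–preserving property of the flow, Theorem~\ref{Thm4.5}: assuming $u_0$ nondecreasing, one compares $U(t)$ with spatial translates of itself and lets the shift tend to $0$ to conclude $u(x,t)\le u(y,t)$ whenever $x<y$. Granting monotonicity, at any point where a one–sided derivative $u_x$ exists and is nonzero it must carry the sign $\chi$, whence $z=\sgn(u_x)=\chi$ there; applying this at $a^+$, at $b^-$, and at every $x\in[a,b]$ with $u_x(x^\pm)\ne0$ yields the asserted equality of signs. The main obstacle is precisely the monotonicity preservation: translation does not respect the bounded domain $(0,L)$ nor the dynamic conditions imposed at \emph{both} endpoints, so the comparison argument must be implemented through an approximation that treats the two boundary points carefully (alternatively, one may invoke that monotonicity preservation is classical for one–dimensional singular flows of this type).

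Parts (b) and (c) are then immediate from condition (b2) of Theorem~\ref{Prop.rep_subd} together with the continuity of $z$. At an endpoint $i\in\Gamma$ one has $-z(i)\,\nu(i)=-[z\cdot\nu](i)\in\Sgn(\gamma u(i)-v(i))$. If $\gamma u(i)=v(i)$, then $\Sgn(0)=[-1,1]$, so $z(i)\,\nu(i)\in[-1,1]$, which is part (b). If instead $\gamma u(i)\ne v(i)$, then $\Sgn(\gamma u(i)-v(i))=\{\sgn(\gamma u(i)-v(i))\}=\{\pm1\}$, forcing $|[z\cdot\nu](i)|=1$; since $|\nu(i)|=1$ this gives $|z(i)|=1$, which is part (c). No further estimates are needed for these two items.
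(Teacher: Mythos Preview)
Your treatment of (b) and (c) is correct and is exactly what the paper does: both follow immediately from the boundary condition $-[z\cdot\nu]\in\Sgn(\gamma u-v)$ in Theorem~\ref{Prop.rep_subd}\,(b2), together with the one--dimensional embedding $X_2((0,L))\subset H^1(0,L)\hookrightarrow C([0,L])$, so that $z(0),z(L)$ are well defined.

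For (a) your approach differs from the paper's, and the difference matters. You try to establish that $u(\cdot,t)$ remains monotone by comparing the solution with its spatial translates via Theorem~\ref{Thm4.5}, and you correctly flag the obstacle: shifting does not preserve the domain $(0,L)$ nor the dynamic conditions imposed at \emph{both} endpoints, so the comparison cannot be carried out as stated. This is a genuine gap in your proposal. Theorem~\ref{Thm4.5} compares two solutions on the same domain with the same boundary structure, and nothing in the paper supplies translated comparison data that respect the constraints at $\{0,L\}$; the fallback ``monotonicity preservation is classical'' is not available here either, since the dynamic boundary coupling is precisely what is nonstandard.

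The paper avoids this altogether. Its argument for (a) is purely pointwise at the level of the representation: at every point where the spatial derivative exists and is nonzero, the condition $z\in\Sgn(u_x)$ forces $z=\chi$; by continuity of $z$ on $[0,L]$ and density of such points in $[a,b]$, one gets $z\equiv\chi$ on $[a,b]$, from which the sign statement follows. The paper does not prove monotonicity preservation as a separate dynamical result (its proof is in fact written in terms of $u_0$), and in Section~\ref{sone} the issue is bypassed by constructing the one--dimensional solution explicitly and checking~(\ref{dd}) directly. So your plan is more ambitious than what the paper actually proves here; to close the gap you should either genuinely complete the translation--comparison argument (nontrivial on this domain) or, following the paper, argue directly at fixed time from the subdifferential characterisation and continuity of $z$.
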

\begin{proof}
Part (a) follows from the fact that at any point $x$, where $u_0$ is differentiable and different from zero, we have $z(x) = \chi$. The set of such points in $[a,b]$ has a full measure. Since $z(x) = \chi= \Sgn(\frac d{dx} u_0(x))$ and $z$ must be continuous, we deduce that $z = \chi$ on $[a,b]$.

The proofs of the remaining parts is done by inspection of the conditions on the canonical section. 
\end{proof}

We know that the canonical selection is uniquely defined as the element of the subdifferential with the least norm. The structure of this minimization problem (\ref{c1}) is such that $z$ has to be decided only where $D u =0$. We would like to take advantage of this fact for the purpose of the 
localization of the problem. We explain it below.

\begin{cor}\label{s5c1}
Let us suppose that $(-\di z, [z\cdot \nu])$ is a canonical selection of $\d E$, 
and $ F_0 := \bigl\{ x \in \Omega \, \bigl| \, |z(x)| < 1 \bigr\}\subset  $ is open
with Lipschitz continuous boundary 
and $ \partial F_0 = \partial \bar{F}_0 $. 
We recall that $F_0$ is contained in the complement of the support of measure $|Du|$. Let $ F $ be the closure of a connected component of $F_0 $ and let $ \nu_F $ be the outer unit normal of $ \partial F $. Additionally, let us suppose that $|[z\cdot \nu_F]|=1$ for $\cH^{N-1}$-a.e. $x\in \d F\cap \Omega$. 
Let $ F $ be a connected component of $ \bar{F}_0 $, and let $ \nu_F $ be the outer unit normal of $ \partial F $. Additionally, let us suppose that $|[z\cdot \nu_F]|=1$ for $\cH^{N-1}$-a.e. $x\in \d F\cap \Omega$. 

Then, $(\di z|_F, [z\cdot \nu]|_{\Gamma_F})$ minimizes the following functional
$$
I(\zeta) =\int_F |\di \zeta |^2\, dx + \int_{\Gamma_F}| [\zeta\cdot\nu]|^2\, d\cH^{N-1}
$$
in the set
$$
    \left\{ \begin{array}{l|l}
        \zeta \in L^\infty(F,\bR^N) & \parbox{6.6cm}{
            $  \| \zeta \|_{\infty} \le 1 $, $ [z\cdot \nu_F]=[\zeta\cdot \nu_F] $ on $ \partial F \cap \Omega $, $ [\zeta\cdot \nu] =0 $  on $ \d \Omega \setminus \Gamma_F $
        }
    \end{array} \right\}.
$$
\end{cor}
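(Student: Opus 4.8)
The strategy is to show that the global minimization problem \eqref{c1} defining the canonical section decouples over the connected components of $F_0$, so that the restriction of the global minimizer $z$ to $F$ must itself minimize the localized functional $I$. The key structural observation is that the objective $\cE(z)=\int_\Omega|\di z|^2\,dx+\int_\Gamma|[z\cdot\nu]|^2\,d\cH^{N-1}$ is additive over disjoint subregions, and that outside $F_0$ the vector field is rigidly constrained: on the complement of $F_0$ we have $|z|=1$, and by Proposition \ref{s5-pr-cannon}(a) the value of $z$ is forced to lie in $\Sgn^N(\nabla u)$, hence $z=\nabla u/|\nabla u|$ is already determined $\cL^N$-a.e.\ where $Du\neq 0$. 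Thus the only freedom in the global problem lives in $F_0$, and the components of $F_0$ interact only through the flux they exchange across their boundaries.

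First I would make precise the decoupling. Write $\Omega = F \cup (\Omega\setminus F)$ and split the energy as $\cE(z)=\int_F|\di z|^2\,dx+\int_{\Gamma_F}|[z\cdot\nu]|^2\,d\cH^{N-1}+R(z)$, where the remainder $R(z)$ collects the contributions from $\Omega\setminus F$ and from $\partial\Omega\setminus\Gamma_F$. The crucial point is that $R(z)$ does not depend on the values of $z$ inside $F$ \emph{once the normal trace $[z\cdot\nu_F]$ on $\partial F\cap\Omega$ is prescribed}. Indeed, by hypothesis $|[z\cdot\nu_F]|=1$ on $\partial F\cap\Omega$, so this trace is pinned to its extremal value and cannot be altered; moreover $\di z$ on $\Omega\setminus F$ is determined by the data there together with that fixed interior trace via the divergence structure. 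Therefore any competitor $\zeta$ that agrees with $z$ on the prescribed normal traces ($[\zeta\cdot\nu_F]=[z\cdot\nu_F]$ on $\partial F\cap\Omega$ and $[\zeta\cdot\nu]=0$ on $\partial\Omega\setminus\Gamma_F$) can be glued to $z|_{\Omega\setminus F}$ to produce an admissible global field $\tilde z$ for \eqref{c1}, with $\cE(\tilde z)=I(\zeta)+R(z)$.

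The final step is the comparison argument. Since $z$ minimizes $\cE$ over all fields admissible in \eqref{c1}, and the glued field $\tilde z$ built from any admissible $\zeta$ is also admissible, we get $I(z|_F)+R(z)=\cE(z)\le\cE(\tilde z)=I(\zeta)+R(z)$, and cancelling $R(z)$ yields $I(z|_F)\le I(\zeta)$ for every admissible $\zeta$. This is exactly the assertion that $(\di z|_F,[z\cdot\nu]|_{\Gamma_F})$ minimizes $I$. To close the argument I must verify that the glued field $\tilde z$ genuinely belongs to $X_2(\Omega)$ and satisfies the constraints of \eqref{c1}: the constraints $z\in\Sgn^N(\nabla u)$ and $\frac{(z,Du)}{|Du|}=1$ hold automatically on $F$ (since $F\subset\Omega\setminus\mathrm{supp}\,|Du|$, where these are vacuous or trivially satisfied by $|\zeta|\le 1$) and are untouched on $\Omega\setminus F$.

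\textbf{Main obstacle.} The delicate step is the gluing: I expect the main difficulty to be verifying that matching the normal traces across $\partial F\cap\Omega$ is sufficient to guarantee $\tilde z\in X_2(\Omega)=L^2_{\mathrm{div}}(\Omega,\bR^N)\cap L^\infty(\Omega,\bR^N)$, i.e.\ that no spurious distributional divergence (a singular measure supported on $\partial F$) is created by the gluing. This is where the Anzellotti pairing theory \eqref{f1-1}--\eqref{f1-4} and the hypothesis that $\partial F$ is Lipschitz with $\partial F_0=\partial\bar F_0$ are essential: the normal trace $[\cdot\,\cdot\nu_F]$ is well defined on such boundaries, and the compatibility $[\tilde z\cdot\nu_F]_{\mathrm{from}\,F}=[\tilde z\cdot\nu_F]_{\mathrm{from}\,\Omega\setminus F}$ is precisely the condition ensuring $\di\tilde z\in L^2(\Omega)$ with no concentrated part on the interface. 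Once this trace-matching lemma is in hand, the rest of the argument is the straightforward energy-splitting and comparison described above.
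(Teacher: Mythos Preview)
Your proposal is correct and follows essentially the same approach as the paper: glue a local competitor $\zeta$ on $F$ to $z|_{\Omega\setminus F}$, use the matching of normal traces on $\partial F\cap\Omega$ to ensure the glued field lies in $X_2(\Omega)$ and is admissible for \eqref{c1}, and then invoke global minimality of $z$ to deduce $I(z|_F)\le I(\zeta)$. The paper phrases this by contradiction and is terser about the gluing step (it simply asserts $\di\tilde z=\di z_0\,\chi_F+\di z\,\chi_{\Omega\setminus F}$), whereas you correctly flag that the absence of a singular interfacial contribution is the only nontrivial point and relies on the Lipschitz regularity of $\partial F$ together with the Anzellotti trace theory.
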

\begin{proof}
    Indeed, if there is $z_0$, such that $I(z_0)< I(z)$, then due to $ [z\cdot \nu_F]=[z_0\cdot \nu_F]$ on $ \partial F \cap \Omega $, we see that for $\tilde z = z_0 \chi_F + z\chi_{\Omega\setminus F}$, we have that
 $$
 \di \tilde z = \di z_0 \chi_F + \di z\chi_{\Omega\setminus F}.
 $$
 Hence, $\cE (\tilde z) < \cE(z)$, contrary to the fact that  $(-\di z, [z\cdot \nu])$ is a canonical selection of $\d E$. 
\end{proof}
In next section, we will have a closer look at $I$.

\subsection{Scaling out parameter $\tau$}
When we were calculating the subdifferential we used the standard
inner product of $H$. This corresponds to parameter $\tau =1$ in (\ref{rn1}).
We shall see here that in fact the parameter $\tau$ may be set to one
by a proper dilating of the domain $\Omega \times (0,T)$. Indeed, we can show the following statement.

    Let us suppose $U =(u,v)\in W^{1,\infty}_{loc}([0,\infty);H) $ is a solution to (\ref{rn1}), with
  initial condition $U_0=(u_0,v_0)\in D(\partial E)$. In other words, there
    is $z\in L^\infty(0,\infty; X_2)$ and $[z\cdot \nu] \in
    L^\infty((0,\infty) \times \Gamma) $ such that
\begin{equation}\label{lsta}
 \begin{array}{ll}
   u_t = \di z & (x,t)\in \Omega\times(0,T), \\
   v_t = - [z\cdot \nu] & (x,t)\in \Gamma\times(0,T), \\
   u(x,0) = u_0(x) & x\in \Omega, \\
   v(x,0) = v_0(x) & x\in\Gamma.
  \end{array}
\end{equation}
Here, $(-\di z,[z\cdot \nu] )$ is the minimal section of
$\partial E(U)$, i.e. it is a minimizer of (\ref{c1}).

For any $ k \in \mathbb{N} $, any $A\subset \bR^k$ and any $\tau>0$, we set,
\begin{equation}\label{dfAtau}
A_\tau = \{ \tau x: x\in A\}.
\end{equation}
Besides, we define $U^\tau=( u^\tau, v^\tau)(y,s) $, $z^\tau(y,s) $ and $ \nu^\tau(y, s) $ by the formulas
\begin{equation}\label{lstau}
    U^\tau(y,s)= U(x,t), \qquad z^\tau(y,s)=z(x,t), \quad \mbox{and} \quad \nu^\tau(y, s) = \nu(x, t),
\end{equation}
where $y =\tau x \in \Omega_\tau $, $s = \tau t \in (0, \tau T)$. We  immediately notice that
$$
U_t = \tau U^\tau_s,\qquad \hbox{div}_x \, z = \tau \hbox{div}_y \, z^\tau.
$$
If we set $\cE_\tau(\zeta)$  by formula
$$
\cE_\tau( \zeta) = \int_{\Omega_\tau} |\hbox{div}_y \, \zeta|^2 \,dy+
\frac1\tau\int_{\Gamma_\tau} |[\zeta\cdot\nu^\tau]|^2\, d\cH^{N-1},
$$
for $\zeta\in X_2$ satisfying
the conditions presented in (\ref{c1}),
then we may check (this is the content of Lemma \ref{le6.1}) that
$\cE_\tau(z^\tau) =\tau^{N-2}\cE(z)$. Thus, $z^\tau$ is the minimal
section of $\d_\tau E$. Thus, we conclude that
$U^\tau$ and $z^\tau$ form a solution to
\begin{equation}\label{lstat}
  \begin{array}{ll}
      u^\tau_t = \di z^\tau & (y,s)\in \Omega_\tau\times(0,\tau T), \\
      \tau  v^\tau_t = - [z^\tau\cdot \nu^\tau] & (y,s)\in \Gamma_\tau\times(0,\tau T), \\
      u^\tau(y,0) = u_0^\tau(y) & y \in \Omega_\tau, \\
      v^\tau(y,0) = v_0^\tau(y) & y\in\Gamma_\tau.
  \end{array}
 \end{equation}

In other words, we have shown:
\begin{cor}\label{cor5.2}
    If $U$ is a solution to (\ref{rn1}) in $ H = L^2(\Omega) \times L^2(\Gamma) $  with $\tau =1$, then $U^\tau$ is a
    solution to (\ref{rn1})  in $ L^2(\Omega_\tau) \times L^2(\Gamma_\tau) $ with $\tau>0$. \qed
\end{cor}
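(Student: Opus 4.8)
The plan is to verify directly that the dilated pair $U^\tau = (u^\tau, v^\tau)$ and the field $z^\tau$, defined in \eqref{lstau}, satisfy system \eqref{lstat}, which is exactly \eqref{rn1} posed on $\Omega_\tau \times (0, \tau T)$ with the prescribed parameter $\tau$, and then to confirm that $\bigl(-\di z^\tau, \tfrac1\tau[z^\tau\cdot\nu^\tau]\bigr)$ is the \emph{minimal} section of $\partial_\tau E(U^\tau)$. The PDEs are immediate from the chain-rule identities $U_t = \tau U^\tau_s$ and $\di_x z = \tau\,\di_y z^\tau$ recorded above: inserting them into the first line of \eqref{lsta} cancels the common factor $\tau$ and gives $u^\tau_s = \di_y z^\tau$, while the boundary line uses only that dilation preserves the outer normal direction, so $\nu^\tau(y)=\nu(x)$ and the weak normal trace satisfies $[z^\tau\cdot\nu^\tau](y)=[z\cdot\nu](x)$; together with $v_t=\tau v^\tau_s$ this turns $v_t=-[z\cdot\nu]$ into $\tau v^\tau_s = -[z^\tau\cdot\nu^\tau]$. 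The initial data transform trivially, since $U^\tau(y,0)=U(x,0)=U_0(x)=U_0^\tau(y)$, and the time-rescaling plus affine spatial change of variables carry $W^{1,\infty}_{loc}$ regularity from $U$ to $U^\tau$.

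The substantive point is the minimality of the section. By Proposition \ref{s5-pr-cannon} and the correspondence $(\xi,\zeta)\in\partial_\tau E \Leftrightarrow (\xi,\tau\zeta)\in\partial E$, the canonical selection of $\partial_\tau E$ on $\Omega_\tau$ is generated by a field $w$ minimizing $\|(-\di w,\tfrac1\tau[w\cdot\nu])\|_\tau^2 = \int_{\Omega_\tau}|\di w|^2\,dy + \tfrac1\tau\int_{\Gamma_\tau}|[w\cdot\nu^\tau]|^2\,d\cH^{N-1} = \cE_\tau(w)$ over the admissible class; the weight $\tfrac1\tau$ is precisely the product of the factor $\tau$ in $(\cdot,\cdot)_\tau$ with the $\tfrac1{\tau^2}$ coming from $\zeta=\tfrac1\tau[w\cdot\nu]$. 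I would then note that the dilation $y=\tau x$ is a bijection of admissible classes: the constraints in \eqref{c1}, namely $z\in\Sgn^N(\nabla u)$, $\frac{(z,Du)}{|Du|}=1$, and $-[z\cdot\nu]\in\Sgn(\gamma u-v)$, survive the change of variables because $z^\tau(y)=z(x)$, because $\Sgn^N$ and $\Sgn$ are positively $0$-homogeneous (so the rescaling $\nabla_y u^\tau=\tfrac1\tau\nabla_x u$ is immaterial), and because $\frac{(z,Du)}{|Du|}$ is a ratio of measures scaling by the same Jacobian and hence unchanged at corresponding points.

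With the bijection in hand, the scaling identity $\cE_\tau(z^\tau)=\tau^{N-2}\cE(z)$ (Lemma \ref{le6.1}), which follows from $\di_y z^\tau=\tfrac1\tau\di_x z$ and $[z^\tau\cdot\nu^\tau](y)=[z\cdot\nu](x)$ together with the Jacobian factors $dy=\tau^N dx$ and $d\cH^{N-1}(y)=\tau^{N-1}d\cH^{N-1}(x)$, shows that $\cE_\tau$ equals the fixed positive multiple $\tau^{N-2}\cE$ along the bijection. Minimizing $\cE$ over the admissible class on $\Omega$ is therefore equivalent to minimizing $\cE_\tau$ over the admissible class on $\Omega_\tau$; since $z$ is the minimizer of the former by hypothesis, $z^\tau$ minimizes the latter, so $\bigl(-\di z^\tau,\tfrac1\tau[z^\tau\cdot\nu^\tau]\bigr)$ is the minimal section of $\partial_\tau E(U^\tau)$. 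Because the gradient-flow solution of Theorem \ref{Thm4.4} evolves by the negative minimal section, the identities of the first paragraph read $\tfrac{dU^\tau}{ds}=\bigl(\di z^\tau,-\tfrac1\tau[z^\tau\cdot\nu^\tau]\bigr)=-\partial^o_\tau E(U^\tau)$ a.e., and $U^\tau$ is the asserted solution.

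The step I expect to be most delicate is the invariance of the weak normal trace $[z\cdot\nu]$ and of the pairing measure $(z,Du)$ under dilation. These objects are defined abstractly through the integration-by-parts formula \eqref{f1-3} rather than pointwise, so the transformation laws $[z^\tau\cdot\nu^\tau](y)=[z\cdot\nu](x)$ and the invariance of $\frac{(z,Du)}{|Du|}$ must be justified through that defining pairing — most cleanly by checking the identity on $C^1(\bar\Omega,\mathbb{R}^N)$ fields, where the traces and products are classical and transform by the usual chain rule, and then passing to the general case by the density and continuity built into \eqref{f1-1}--\eqref{f1-4}. Everything else reduces to the bookkeeping of Jacobian factors already contained in Lemma \ref{le6.1}.
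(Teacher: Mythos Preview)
Your proof is correct and follows essentially the same route as the paper: verify the rescaled system \eqref{lstat} via the chain-rule identities $U_t=\tau U^\tau_s$ and $\di_x z=\tau\,\di_y z^\tau$, and then invoke the scaling relation $\cE_\tau(z^\tau)=\tau^{N-2}\cE(z)$ (which the paper attributes to Lemma \ref{le6.1}) to transfer minimality of the section from $z$ to $z^\tau$. You are in fact more careful than the paper, which leaves the bijection of admissible classes and the invariance of the weak normal trace and of $\frac{(z,Du)}{|Du|}$ under dilation entirely implicit; your flagging of that as the delicate step is apt.
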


Of course, the converse statement is true.
If $\tilde U$ is a solution to (\ref{lstat}), then  $\tilde U^{1/\tau}$ is a solution to (\ref{lsta}). In order to see this, we
apply the results we have shown to $\tilde U^{1/\tau}$ and we scale $\tilde U^{1/\tau}$ by $\tau^{-1}$.

\section{Calibrability and coherence}\label{42CC} 

We shall introduce the notions of calibrability and coherency, when a facet touches the boundary of a given domain. In the following considerations, we assume that $\Omega\subset \mathbb{R}^N$ is an open bounded domain with Lipschitz boundary. 
Let $\Gamma$ be a relatively closed set in $\partial\Omega$ of positive $\mathcal{H}^{N-1}$ measure.

A compact set $F$ in $\bar{\Omega}$ together with direction $\chi \in C^0 \left(\Omega \backslash F; \{\pm 1\}\right)$ is called a \textit{facet} in $\Omega$. 

\begin{remark}The definition above is as in \cite{GigaPozarCPAM}, however, we can also define a facet as a flat part of the graph a solution $u$, see \cite[\S 2.4]{ggr2015}. In the present context, such a distinction does not matter, because we are talking about sets, where $\nabla u =0$.
\end{remark}

Let us consider a facet $(F,\chi)$ whose boundary $\partial F$ is Lipschitz.
Let $\nu_F$ be the outer unit normal field of $ \partial F $.
 Let $z$ be a vector field in $F$ belonging to $X_2(F)$.
We say that $z$ is a \textit{Cahn--Hoffman vector field} in $F$ with $(\Omega,\Gamma)$ if
    \begin{equation}\label{CHvec}
    \|z\|_\infty \leq 1 \quad 
    [z\cdot\nu_F] = \gamma\chi \quad\text{on}\quad \partial F \cap \Omega, \quad
    [z\cdot\nu_F] = 0 \quad\text{on}\quad (F \cap \d \Omega) \setminus \Gamma
    \end{equation}
is 
fulfilled, where $\gamma\chi$ is the trace of $\chi$ taken from $F^c$, the complement of $F$.
 The totality of Cahn--Hoffman vector fields is denoted by $CH(F,\Omega,\Gamma)$, i.e.,
\[
	CH(F,\Omega,\Gamma) 
    = \left\{ \begin{array}{l|l}
        z \in X_2(F) & \mbox{$ z $ fulfills \eqref{CHvec}} 
    \end{array} \right\}.
\]
We say that a facet $(F,\chi)$ with Lipschitz boundary is \textit{admissible} if $CH(F,\Omega,\Gamma)$ is non empty.

We are interested in those Cahn--Hoffman vector fields, which minimize the localized
problem of the canonical selection of $\d E$. We argued in Corollary
\ref{s5c1} that for $\tau=1$ in eq. (\ref{rn1}), the functional to minimize  was $I$, defined there. We
claim that for  
a given facet $(F,\chi)$ and a  parameter $\tau>0$  in eq. (\ref{rn1}), we must consider
the following functional in order to determine the minimal section of $E$,
\[
I_\tau(z) = \int_F |\di z|^2 \,dx 
+ \frac{1}{\tau} \int_{\Gamma_F} |[z\cdot\nu]|^2\,
d\mathcal{H}^{N-1},
\qquad z \in CH(F,\Omega,\Gamma),
\]
where $\Gamma_F:=\partial F\cap\Gamma$. We notice that $I= I_1$. Lemma
\ref{le6.1} shows the relationship between $I_1$ and $I_\tau$, proving
that  $I_\tau$ is indeed the localized
problem of the canonical selection of $\d E$.

    In principle, we should check if $I_\tau$ attains its minimum.
Functional $I_\tau$ is convex on a closed, convex set $CH(F,\Omega,\Gamma)$. We claim that
it is lower semi-continuous with respect to $L^2$-weak convergence of $\di z$. For this purpose, we have to check that the boundary integral is lower
semi-continuous. We notice that if a test function $\varphi\in W^{1,2}(\Omega)$ and $\di z_n \rightharpoonup \di z$ in $L^2$, then we may assume that $z_n \rightharpoonup z$ in $L^2$, because $ \| z_n \|_{\infty} \le 1$. As a result,
\begin{equation}\label{s6e1}
 \lim_{n\to \infty} \int_{\Gamma_F} [z_n\cdot \nu] \gamma\varphi \,d\mathcal{H}^{N-1}=
\lim_{n\to \infty} \int_F \di( z_n \varphi) \,dx =\int_F \di( z \varphi) \,dx
= \int_{\Gamma_F} [z\cdot \nu] \gamma\varphi \,d\mathcal{H}^{N-1}.
\end{equation}
In order to claim that $[z_n\cdot \nu]\rightharpoonup  [z\cdot \nu]$ in $L^2(\Gamma_F, \mathcal{H}^{N-1})$, we need to show
\begin{equation*}
 \lim_{n\to \infty} \int_{\Gamma_F} [z_n\cdot \nu] \psi\,d\mathcal{H}^{N-1}=
    \int_{\Gamma_F} [z\cdot \nu] \psi \,d\mathcal{H}^{N-1}\qquad \hbox{for all }\psi\in L^2(\Gamma_F). 
\end{equation*}
However, identity (\ref{s6e1}) combined with the standard mollification argument, yields the desired result. Hence, the boundary term is weakly lower semi-continuous, as we claimed.
 
Thus, there always exists a minimizer $z_0 \in CH(F,\Omega,\Gamma)$ of $I_\tau(z)$.
Moreover, by strict convexity of $I_\tau$ with respect to 
$\di z_0$ and $[z_0\cdot\nu]$ on $\Gamma_F$ the values of $\di z_0$ and $[z_0\cdot\nu]$
are uniquely determined although there are many minimizers $z$ of $I_\tau(z)$, besides  $z_0$.

After these preparations, the following definition is justified.
\begin{defn} \label{Cal} 
An admissible facet $(F,\chi)$ is \textit{calibrable} if there is a Cahn--Hoffman vector field $z$ minimizing $I_\tau$ such that $\di z$ is constant in $F$ and that $[z\cdot\nu]$
is constant on $\Gamma$.
\end{defn}

\begin{remark} \label{RCal} 
The above notion of calibrability agrees with the conventional one when $\Omega=\mathbb{R}^N$.
\end{remark}

Prompted by Definition \ref{Cal}, we introduce the notation.
$$
SCH(F,\Omega,\Gamma):= \{ z\in CH(F,\Omega,\Gamma):\ \di z= \hbox{const} \hbox{ on }F,\ [z\cdot\nu]= \hbox{const}\hbox{ on }\Gamma\}.
$$
Elements of $SCH(F,\Omega,\Gamma)$ will be called special Cahn--Hoffman vector fields.

\bigskip
We would like to establish the relationship between minimizers of $I_\tau$ and $I_1$.
\begin{lemma}\label{le6.1}
 A vector field $z\in CH( F_{\tau_0}, \Omega_{\tau_0}, \Gamma_{\tau_0})$ is a minimizer of $I_{\tau_0}$ if and only if $z^\tau \in CH( F_{\tau}, \Omega_{\tau}, \Gamma_{\tau})$, where
 $ z^\tau (x) = z\left(x \frac{\tau_0}\tau \right)$ minimizes $I_\tau$. Moreover, $I_\tau(z^\tau) = \tau^{N-2} I(z)$.
\end{lemma}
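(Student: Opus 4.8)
The plan is to exhibit the rescaling $z \mapsto z^\tau$ as a bijection between the two Cahn--Hoffman classes that multiplies the functional by a fixed positive constant; once this is established, minimizers correspond to minimizers and the energy identity is read off directly from the scaling computation.

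First I would check that $z \mapsto z^\tau$, with $z^\tau(x) = z(x\tau_0/\tau)$, maps $CH(F_{\tau_0},\Omega_{\tau_0},\Gamma_{\tau_0})$ bijectively onto $CH(F_\tau,\Omega_\tau,\Gamma_\tau)$, the inverse being the dilation by $\tau/\tau_0$. Because $z^\tau$ only relocates the values of $z$ along the dilation $x \mapsto x\tau_0/\tau$, the constraint $\|z^\tau\|_\infty \le 1$ is equivalent to $\|z\|_\infty \le 1$. Differentiating gives $\di z^\tau(x) = (\tau_0/\tau)\,(\di z)(x\tau_0/\tau)$, so $z^\tau \in X_2(F_\tau)$ if and only if $z \in X_2(F_{\tau_0})$. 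A dilation preserves unit outer normals, i.e. $\nu_{F_\tau}(x) = \nu_{F_{\tau_0}}(x\tau_0/\tau)$, and leaves field values unchanged, so the weak normal trace transforms as $[z^\tau\cdot\nu](x) = [z\cdot\nu](x\tau_0/\tau)$; consequently the constraints in \eqref{CHvec}, namely the prescribed trace $\gamma\chi$ on $\partial F \cap \Omega$ and the vanishing of the normal trace on $(F\cap\partial\Omega)\setminus\Gamma$, are carried one into the other (with $\chi$ transported by the same dilation, which changes nothing since $\chi$ is $\{\pm1\}$-valued).

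Next I would evaluate $I_\tau(z^\tau)$ by the change of variables $\xi = x\tau_0/\tau$. In the bulk term, $\di z^\tau(x) = (\tau_0/\tau)(\di z)(\xi)$ together with $dx = (\tau/\tau_0)^N\,d\xi$ produces the factor $(\tau/\tau_0)^{N-2}$. In the boundary term, $[z^\tau\cdot\nu](x) = [z\cdot\nu](\xi)$ while $d\mathcal{H}^{N-1}$ scales by $(\tau/\tau_0)^{N-1}$, so together with the prefactor $1/\tau$ the boundary contribution acquires $\tfrac{1}{\tau}(\tau/\tau_0)^{N-1} = \tfrac{1}{\tau_0}(\tau/\tau_0)^{N-2}$. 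Factoring out $(\tau/\tau_0)^{N-2}$ reconstitutes exactly $I_{\tau_0}(z)$, giving
\[
I_\tau(z^\tau) = (\tau/\tau_0)^{N-2}\, I_{\tau_0}(z),
\]
which specializes to the stated identity $I_\tau(z^\tau) = \tau^{N-2} I(z)$ when $\tau_0 = 1$ and $I = I_1$. Since $(\tau/\tau_0)^{N-2} > 0$ and $z \mapsto z^\tau$ is a bijection of the admissible classes, the equivalence of minimizers follows immediately.

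The step I expect to be the main obstacle is the rigorous transformation of the weak normal trace for merely $X_2$ (not $C^1$) fields. For smooth fields the identity $[z^\tau\cdot\nu](x) = [z\cdot\nu](x\tau_0/\tau)$ follows from the second line of \eqref{f1-1}; in general I would substitute the change of variables into the integration-by-parts formula \eqref{f1-3}, testing against an arbitrary $u \in BV(\Omega_{\tau_0})\cap L^2(\Omega_{\tau_0})$ transported by the same dilation, and invoke the uniqueness of the normal-trace operator to identify $[z^\tau\cdot\nu]$ with the transported $[z\cdot\nu]$.
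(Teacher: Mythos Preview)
Your proposal is correct and follows exactly the same route as the paper: reduce to $\tau_0=1$, note that the dilation is a bijection between the two Cahn--Hoffman classes, and compute the scaling factor $\tau^{N-2}$ by change of variables in the bulk and boundary integrals. Your write-up is simply more explicit than the paper's, which records the bijection as ``obvious'' and the scaling identity in one line; your final remark on handling the weak normal trace for non-smooth fields is a welcome piece of rigor that the paper omits.
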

\begin{proof}
We may assume $\tau_0 =1$. 
Obviously, $z\in CH( F,\Omega, \Gamma)$ if and only if $z^\tau\in CH( F_{\tau}, \Omega_{\tau}, \Gamma_{\tau})$.

Then, we perform a change of variables in $I_\tau(z^\tau)$. Namely, after setting $y = x/\tau$, we obtain,
$$
I_\tau(z^\tau) = \int_{F_\tau} |\hbox{div}_x \, z^\tau|^2\,dx + 
 \frac 1\tau \int_{(\Gamma_F)_\tau} | [z^\tau \cdot\nu]|^2\, d\cH^{N-1}=
\tau^{N-2} I(z).
$$
This multiplicative relationship $I_\tau(z^\tau) = \tau^{N-2} I(z)$ implies  validity of our claim.
\end{proof}

The relationship between values of $z\in CH( F,\Omega, \Gamma)$ on $F$ and $\Gamma_F$ is important for our considerations. Here is our basic observation.
\begin{lem} \label{PCali} 
For $z$ in $CH(F,\Omega,\Gamma)$, we denote the average of div\,$ z$ and $[z,\nu]$ by
$$
\lambda_z = \frac{1}{|F|}\int_F \di z\,dx,\qquad \mu_z = \frac{1}{\cH^{N-1}(\Gamma_F)}\int_{\Gamma_F} [z\cdot \nu]\,d\cH^{N-1}.
$$ 
Then,
\[
\lambda_z|F| = \cH^{N-1}(\partial_+ F) - \cH^{N-1}(\partial_-F) + \mu_z \cH^{N-1}(\Gamma_F),
\]
where $|F|$ denotes the Lebesgue measure of $F$. Here,
\[
    \partial_\pm F= \left\{ x \in \partial F \cap \Omega \mid \gamma\chi = \pm 1 \right\}.
\]
\end{lem}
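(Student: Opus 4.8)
The plan is to derive the identity directly from the Gauss--Green (integration-by-parts) formula applied on $F$, reading off each boundary contribution from the defining conditions \eqref{CHvec} of a Cahn--Hoffman field. Since $(F,\chi)$ is an admissible facet with Lipschitz boundary, $F$ is itself a bounded Lipschitz domain, and the identity \eqref{f1-3} applies with $F$ in place of $\Omega$ and $z\in X_2(F)$. First I would test this formula against the constant function $u\equiv 1$: because $D1=0$ and $\gamma 1 = 1$, the bulk term on the left vanishes and one obtains
\[
\int_F \di z \, dx = \int_{\partial F} [z\cdot \nu_F] \, d\cH^{N-1}.
\]
This reduces everything to a bookkeeping of the normal trace $[z\cdot\nu_F]$ over the three geometrically distinct portions of $\partial F$.

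Next I would decompose $\partial F$, up to an $\cH^{N-1}$-null set, as the disjoint union of $\partial F\cap\Omega$, of $\Gamma_F=\partial F\cap\Gamma$, and of the remaining boundary piece $(F\cap\d\Omega)\setminus\Gamma$. On $\partial F\cap\Omega$ the first condition in \eqref{CHvec} gives $[z\cdot\nu_F]=\gamma\chi$; since $\chi\in C^0(\Omega\setminus F;\{\pm1\})$ its trace takes only the values $\pm1$, so $\partial F\cap\Omega$ splits (modulo a null set) into $\partial_+F$, where $\gamma\chi=+1$, and $\partial_-F$, where $\gamma\chi=-1$. Hence
\[
\int_{\partial F\cap\Omega} [z\cdot\nu_F] \, d\cH^{N-1} = \cH^{N-1}(\partial_+F) - \cH^{N-1}(\partial_-F).
\]
On $(F\cap\d\Omega)\setminus\Gamma$ the last condition in \eqref{CHvec} forces $[z\cdot\nu_F]=0$, so this piece contributes nothing. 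Finally, on $\Gamma_F\subset\d\Omega$ the outer unit normal of $F$ coincides with the outer normal $\nu$ of $\Omega$, so $[z\cdot\nu_F]=[z\cdot\nu]$ there and, by the definition of $\mu_z$, the integral equals $\mu_z\,\cH^{N-1}(\Gamma_F)$.

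Assembling the three contributions and using $\int_F \di z\,dx = \lambda_z|F|$ yields exactly
\[
\lambda_z |F| = \cH^{N-1}(\partial_+F) - \cH^{N-1}(\partial_-F) + \mu_z\,\cH^{N-1}(\Gamma_F),
\]
which is the assertion. The routine part is the algebra; the point demanding care is the justification of the Gauss--Green step for a general $X_2(F)$ field whose normal trace need not be a classical restriction, which is precisely why I invoke the pairing identity \eqref{f1-3} rather than a pointwise divergence theorem. The other delicate item is the identification $\nu_F=\nu$ on $\d\Omega\cap\partial F$ together with the claim that the three boundary pieces exhaust $\partial F$ up to $\cH^{N-1}$-negligible sets; both rest on the Lipschitz regularity of $\partial F$ and of $\d\Omega$, and on the fact that $\gamma\chi\in\{\pm1\}$ almost everywhere on the interior portion of $\partial F$.
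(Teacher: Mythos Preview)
Your proof is correct and follows exactly the same approach as the paper's: apply the divergence theorem on $F$, split $\partial F$ into its interior piece $\partial_+F\cup\partial_-F$, the piece $\Gamma_F$, and the Neumann piece $(F\cap\partial\Omega)\setminus\Gamma$, and read off the contributions from \eqref{CHvec}. The paper's proof is a two-line version of what you wrote; your additional care about invoking \eqref{f1-3} for the Gauss--Green step and identifying $\nu_F=\nu$ on $\partial\Omega$ is welcome but not a different method.
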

\noindent{\it Proof.}
 Integration by parts yields
\begin{align*}
	\lambda_z|F| &= \int_F \di z \,dx 
    = \int_{\partial_+ F} [z\cdot\nu_F]\, d\mathcal{H}^{N-1} 
    + \int_{\partial_- F} [z\cdot\nu_F] \,d\mathcal{H}^{N-1}
	+ \int_{\Gamma_F} [z\cdot\nu] \, d\mathcal{H}^{N-1} \\
	&= \cH^{N-1}(\partial_+ F) - \cH^{N-1}(\partial_- F) + \mu_z \cH^{N-1}(\Gamma_F). \qquad\qquad \qed
\end{align*}

The above Lemma helps us to introduce a notion important in our analysis. Calibrability of a facet means that it moves as an entity. However, the bulk may move at a different velocity than the boundary layer. That is why we introduce the notion of coherency.
\begin{defn}
 We shall say that facet $(F,\chi)$ is {\it $(\tau,\Gamma)$-coherent} if there is a Cahn--Hoffman vector field $z$, minimizing $I_\tau$ such that
 $$
 \tau \lambda_z + \mu_z =0,
 $$
 where $\lambda_z$ and $ \mu_z$ defined in Lemma \ref{PCali}.
\end{defn}
Now, we are going to establish the relationship between these notions and establish the sufficient conditions for facet calibrability or $(\tau,\Gamma)$-coherency. But first we state a simple fact about quadratic polynomials.
Let $a$ and $b$ be positive constants.
We consider,
\begin{equation} \label{4f1} 
	f(\lambda,\mu) = a\lambda^2 + b\mu^2/\tau
\end{equation}
under constraint
\begin{equation} \label{4f2} 
	\lambda a = c + b\mu, \quad (c \in \mathbb{R}).
\end{equation}
\begin{prop} \label{El} 
    Let $(\lambda,\mu) \in \mathbb{R}\times\mathbb{R}$ be the (unique) minimizer of \eqref{4f1} subject to \eqref{4f2}, if and only if  $\tau\lambda+\mu=0$.
\end{prop}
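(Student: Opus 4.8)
The plan is to exploit the fact that $f$ is a strictly convex quadratic while the constraint \eqref{4f2} cuts out an affine line in the $(\lambda,\mu)$-plane; the minimizer is then characterized by the first-order (orthogonality) condition along that line, which I will show is exactly $\tau\lambda+\mu=0$. In particular, I read the statement as characterizing the minimizer \emph{among} the pairs already satisfying \eqref{4f2}: a feasible $(\lambda,\mu)$ minimizes \eqref{4f1} if and only if $\tau\lambda+\mu=0$.

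First I would record the structural facts. Since $a,b,\tau>0$, the function $f(\lambda,\mu)=a\lambda^2+b\mu^2/\tau$ is strictly convex and coercive, and the feasible set $\{(\lambda,\mu):a\lambda=c+b\mu\}$ is a nonempty line. Consequently the restriction of $f$ to this line is a strictly convex, coercive quadratic in one real parameter, which therefore possesses a unique minimizer; this is what justifies the word ``unique'' in the statement.

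Next I would parametrize the feasible line. Writing the constraint as $a\lambda-b\mu=c$, a tangent direction is $(b,a)$, because $a\cdot b-b\cdot a=0$ leaves the left-hand side unchanged; thus every feasible point has the form $(\lambda+tb,\mu+ta)$, $t\in\mathbb{R}$, starting from any fixed feasible $(\lambda,\mu)$. Substituting and differentiating,
\[
\frac{d}{dt}f(\lambda+tb,\mu+ta)\Big|_{t=0}=2ab\lambda+2ab\mu/\tau=2ab\bigl(\lambda+\mu/\tau\bigr).
\]
The coefficient of $t^2$ in $f(\lambda+tb,\mu+ta)$ equals $ab(b+a/\tau)>0$, so this one-variable quadratic is strictly convex; hence vanishing of the displayed derivative is both necessary and sufficient for $(\lambda,\mu)$ to be the minimizer. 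Since $2ab\neq 0$, that condition reads $\lambda+\mu/\tau=0$, i.e.\ $\tau\lambda+\mu=0$, which proves both implications at once.

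I do not anticipate a serious obstacle, as the computation is elementary; the only point requiring care is the interpretation of the statement — the characterization is taken among points already satisfying \eqref{4f2}, and it is strict convexity that upgrades the vanishing of the directional derivative from a merely necessary condition to a sufficient one, thereby pinning down the unique minimizer.
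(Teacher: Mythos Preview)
Your proof is correct and follows essentially the same approach as the paper: both reduce the constrained problem to a strictly convex one-variable quadratic along the feasible line and read off the minimizer from the vanishing first derivative. The only cosmetic difference is that the paper parametrizes by $\mu$ via $\lambda=(c+b\mu)/a$, whereas you parametrize along the tangent direction $(b,a)$; the resulting first-order condition $\lambda+\mu/\tau=0$ is the same.
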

\begin{proof}
This is elementary.
 We set
\[
	g(\mu) = f \left((c+b\mu)/a,\mu\right)
\]
and differentiate to get
\[
	g'(\mu) = 2b(c+b\mu)/a + 2b\mu/\tau.
\]
Here, we know that the minimum point $ \mu_0 $ coincides with the unique solution of $ g'(\mu_0) = 0 $.
Additionally, due to the constraint $\lambda=(c+b\mu)/a$, the equation $g'(\mu_0)=0$ is equivalent to $\lambda_0+\mu_0/\tau=0$, with $\lambda_0=(c+b\mu_0)/a$. The proof of Proposition is now complete.
\end{proof}
We will use this observation in the next Proposition, providing sufficient conditions for a minimizer.

\begin{prop}\label{Pcon}
If $z_*\in CH(F,\Omega,\Gamma)$ is such that $\tau \lambda_* + \mu_* =0$, where $\lambda_* = \lambda_{z_*} = \di z_*$ on $F$, $\mu_* = \mu_{z_* }= [z_*\cdot\nu]$ on $\Gamma_F$, then $z_*$ is a minimizer of $I_\tau$.
\end{prop}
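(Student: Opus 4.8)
The plan is to bound $I_\tau$ from below by the two–variable quadratic $f$ of Proposition~\ref{El}, evaluated at the \emph{averages} $\lambda_z,\mu_z$ of $\di z$ and $[z\cdot\nu]$, and then to combine the elementary optimization of Proposition~\ref{El} with the integration-by-parts identity of Lemma~\ref{PCali}. The whole argument only needs these two earlier results together with Jensen's inequality, so it will directly produce $I_\tau(z_*)\le I_\tau(z)$ for every competitor $z$, which is exactly the assertion that $z_*$ is a minimizer.

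First I would apply Jensen's inequality (equivalently, Cauchy--Schwarz) separately to each of the two integrals defining $I_\tau$. Since $|F|$ and $\cH^{N-1}(\Gamma_F)$ are finite and positive, for any $z\in CH(F,\Omega,\Gamma)$ one has
$$\int_F |\di z|^2\,dx \ge |F|\,\lambda_z^2, \qquad \int_{\Gamma_F}|[z\cdot\nu]|^2\,d\cH^{N-1} \ge \cH^{N-1}(\Gamma_F)\,\mu_z^2,$$
so that, setting $a=|F|$ and $b=\cH^{N-1}(\Gamma_F)$, we obtain $I_\tau(z)\ge f(\lambda_z,\mu_z)$ with $f$ as in \eqref{4f1}. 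Next I would observe that Lemma~\ref{PCali} says precisely that the averages $(\lambda_z,\mu_z)$ satisfy the affine constraint \eqref{4f2} with $c=\cH^{N-1}(\partial_+F)-\cH^{N-1}(\partial_-F)$, and this holds \emph{independently of $z$}. Hence $f(\lambda_z,\mu_z)$ is never smaller than the minimum of $f$ over the line \eqref{4f2}, and by Proposition~\ref{El} this constrained minimum is attained exactly at the pair characterized by $\tau\lambda+\mu=0$.

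Finally I would use the special structure of $z_*$ to close the loop. Because $\di z_*\equiv\lambda_*$ on $F$ and $[z_*\cdot\nu]\equiv\mu_*$ on $\Gamma_F$ are constants, Jensen's inequality holds with equality for $z_*$, whence $I_\tau(z_*)=f(\lambda_*,\mu_*)$. Moreover $(\lambda_*,\mu_*)$ itself satisfies the constraint \eqref{4f2} (Lemma~\ref{PCali} applied to $z_*$, whose averages are these constants), and by hypothesis $\tau\lambda_*+\mu_*=0$, so Proposition~\ref{El} identifies $(\lambda_*,\mu_*)$ as the unique constrained minimizer of $f$. Chaining the three facts gives, for every $z\in CH(F,\Omega,\Gamma)$,
$$I_\tau(z)\ \ge\ f(\lambda_z,\mu_z)\ \ge\ f(\lambda_*,\mu_*)\ =\ I_\tau(z_*),$$
which proves the proposition. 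There is no genuine obstacle in this argument; the only point demanding a little care is the bookkeeping of the constants $a,b,c$, so that the Jensen lower bound, the constraint furnished by Lemma~\ref{PCali}, and the normalization of Proposition~\ref{El} are aligned, together with the small verification that the coherent pair $(\lambda_*,\mu_*)$ indeed lies on the constraint line \eqref{4f2}.
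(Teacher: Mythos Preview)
Your argument is correct and is essentially the same as the paper's own proof: both use the Schwarz/Jensen inequality to bound $I_\tau(z)$ below by $|F|\lambda_z^2+\cH^{N-1}(\Gamma_F)\mu_z^2/\tau$, invoke Lemma~\ref{PCali} for the affine constraint, and then apply Proposition~\ref{El} together with the fact that $z_*$ attains equality in Jensen. The only cosmetic difference is that you spell out the chain $I_\tau(z)\ge f(\lambda_z,\mu_z)\ge f(\lambda_*,\mu_*)=I_\tau(z_*)$ a bit more explicitly than the paper does.
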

\begin{proof}
By the Schwarz inequality, for any vector field $z$, we have
\[
\frac{1}{|F|} \left(\int_F \di z \,dx\right)^2
\leq \int_F (\di z)^2 \,dx, \quad
\frac{1}{\cH^{N-1}(\Gamma_F)} \left(\int_{\Gamma_F} [z\cdot\nu] \,d\mathcal{H}^{N-1}\right)^2
\leq \int_{\Gamma_F} [z\cdot\nu]^2\, d\mathcal{H}^{N-1}.
\]
Thus, we see that the definition of $I_\tau$ yields,
\begin{equation}\label{nierw}
|F| \lambda_z^2 + \frac{\cH^{N-1}(\Gamma_F)}\tau \mu_z^2 \le 
\int_F (\di z)^2\,dx + \frac1\tau \int_{\Gamma_F} [z\cdot \nu]^2\,d \cH^{N-1} = I_\tau(z).
\end{equation}
We know by Lemma \ref{PCali} that 
\begin{equation}\label{consr}
 |F| \lambda_z = c + \mu_z \cH^{N-1}(\Gamma_F), 
\end{equation}
where $c$ is a constant. By Proposition \ref{El}, the left-hand-side of (\ref{nierw}) is minimized under the constraint (\ref{consr}) if and only if $\tau \lambda_{z_*} + \mu_{z_*} =0$. Furthermore, our assumption on $z_*$ yields us
$$
I_\tau (z_*) = |F| \lambda_*^2 + \frac{\cH^{N-1}(\Gamma_F)}\tau \mu_*^2 \le  I_\tau(z).
$$
Thus, the proof is complete.
\end{proof}

In other words, a special Cahn--Hoffman vector field $z$ minimizes $I_\tau$ if we can ensure $\tau \lambda_{z} + \mu_{z} =0$.
Now, we may prove the converse statement.
\begin{prop}\label{pr3prime}
 Let us suppose that facet $(F,\chi)$ is calibrable and $(\tau,\Gamma)$-coherent. Then, there is $z_*\in SCH(F,\Omega,\Gamma)$ such that $\tau \di  z_* + [z_*\cdot\nu] =0$.
\end{prop}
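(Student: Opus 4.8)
The plan is to produce the required field simply by taking the special minimizer furnished by calibrability, and then to verify the coherency relation for it by transporting the relation satisfied by the (possibly different) minimizer coming from coherency. The bridge between the two is the uniqueness of $\di z$ on $F$ and of $[z\cdot\nu]$ on $\Gamma_F$ among all minimizers of $I_\tau$, which follows from the strict convexity of $I_\tau$ recorded just before Definition \ref{Cal}.

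First I would invoke Definition \ref{Cal}: since $(F,\chi)$ is calibrable, there exists $z_1\in SCH(F,\Omega,\Gamma)$ minimizing $I_\tau$, that is, a Cahn--Hoffman field with $\di z_1\equiv\lambda_1=\mathrm{const}$ on $F$ and $[z_1\cdot\nu]\equiv\mu_1=\mathrm{const}$ on $\Gamma_F$. Next, by $(\tau,\Gamma)$-coherency there is a minimizer $z_2\in CH(F,\Omega,\Gamma)$ of $I_\tau$ with $\tau\lambda_{z_2}+\mu_{z_2}=0$, where $\lambda_{z_2}$ and $\mu_{z_2}$ are the averages introduced in Lemma \ref{PCali}.

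The key step is then to observe that $z_1$ and $z_2$ are both minimizers of $I_\tau$, so by the strict convexity of $I_\tau$ in $\di z$ and in $[z\cdot\nu]$ these two quantities are the same for every minimizer. In particular $\di z_2=\di z_1\equiv\lambda_1$ a.e.\ on $F$ and $[z_2\cdot\nu]=[z_1\cdot\nu]\equiv\mu_1$ $\cH^{N-1}$-a.e.\ on $\Gamma_F$. Averaging, this forces $\lambda_{z_2}=\lambda_1$ and $\mu_{z_2}=\mu_1$, whence the coherency relation $\tau\lambda_{z_2}+\mu_{z_2}=0$ becomes $\tau\lambda_1+\mu_1=0$. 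Since $z_1$ is special, $\di z_1=\lambda_1$ and $[z_1\cdot\nu]=\mu_1$ are constants, so $\tau\di z_1+[z_1\cdot\nu]=\tau\lambda_1+\mu_1=0$, and $z_*:=z_1$ is the desired field.

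The only real subtlety---rather than a genuine obstacle---is that calibrability and coherency a priori hand us two unrelated minimizers $z_1$ and $z_2$; the whole argument hinges on recognizing that the strict convexity of $I_\tau$ pins down $\di z$ and $[z\cdot\nu]$ uniquely among minimizers, which is exactly what lets us carry the scalar relation $\tau\lambda+\mu=0$ from $z_2$ over to the special field $z_1$.
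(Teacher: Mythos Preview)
Your proof is correct and follows essentially the same route as the paper: take the special minimizer given by calibrability, then use the uniqueness of $\di z$ and $[z\cdot\nu]$ among minimizers of $I_\tau$ to transfer the coherency relation $\tau\lambda+\mu=0$ onto it. The paper's version is more compressed---it does not explicitly name the second minimizer $z_2$---but the logic is identical.
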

\begin{proof}
Since $ (F, \chi) $ is calibrable, we can find $ z_* \in SCH(F, \Omega, \Gamma) $ minimizing $ I_\tau $. In addition, we must have $\tau \di  z_* + [z_*\cdot\nu] =0$, due to the $ (\tau, \Gamma) $-coherency of $ (F, \chi) $ and the uniqueness of minimizing $\di z_*$ and $[z_*\cdot\nu]$.
\end{proof}

%

However, we must be prepared for the existence of calibrable facets, which are not $(\tau,\Gamma)$-coherent.

\begin{lem} \label{PCal}
    If for a facet $(F,\chi)$ there is $z_*$ in $SCH(F,\Omega,\Gamma)$, with $ \lambda := \di z_* \in \mathbb{R} $ and $ \mu := [z_* \cdot \nu] \in \mathbb{R} $, such that $|\mu| = 1$ and $(\lambda + \mu/\tau) \Sgn \mu \le 0$. Then, this facet is calibrable.
\end{lem}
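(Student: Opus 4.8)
The plan is to prove the lemma by showing that the given special Cahn--Hoffman field $z_*$ is itself a minimizer of $I_\tau$; calibrability then follows immediately from Definition \ref{Cal}, since $z_*\in SCH(F,\Omega,\Gamma)$ already has $\di z_*$ constant on $F$ and $[z_*\cdot\nu]$ constant on $\Gamma_F$. The argument recycles the two ingredients assembled for Proposition \ref{Pcon}: the Schwarz lower bound and the linear constraint of Lemma \ref{PCali}. The only new feature is that, unlike in Proposition \ref{Pcon}, the optimum will sit on the boundary $|\mu|=1$ of the feasible range rather than at the interior point $\tau\lambda+\mu=0$ of Proposition \ref{El}.

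First I would fix an arbitrary competitor $z\in CH(F,\Omega,\Gamma)$ and, exactly as in the derivation of \eqref{4f1}, apply the Schwarz inequality to get $I_\tau(z)\ge |F|\lambda_z^2+\frac{1}{\tau}\cH^{N-1}(\Gamma_F)\mu_z^2=f(\lambda_z,\mu_z)$, with $a=|F|$ and $b=\cH^{N-1}(\Gamma_F)>0$ (the relevant case, where the facet genuinely meets $\Gamma$). By Lemma \ref{PCali} the averages satisfy the linear constraint \eqref{4f2}, $a\lambda_z=c+b\mu_z$ with $c=\cH^{N-1}(\partial_+F)-\cH^{N-1}(\partial_-F)$. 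Eliminating $\lambda_z$ turns the lower bound into $I_\tau(z)\ge g(\mu_z)$, where $g(\mu)=f((c+b\mu)/a,\mu)$ is the one-variable convex function appearing in the proof of Proposition \ref{El}. The decisive feasibility fact is that $\|z\|_\infty\le 1$ forces $|[z\cdot\nu]|\le 1$ a.e., hence $\mu_z\in[-1,1]$.

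The core step is then to show that $g$ attains its minimum over $[-1,1]$ at the endpoint $\mu_*:=\mu=\Sgn\mu$ (recall $|\mu|=1$). Using $g'(\mu)=2b\bigl(\lambda(\mu)+\mu/\tau\bigr)$ with $\lambda(\mu)=(c+b\mu)/a$, the hypothesis $(\lambda_*+\mu_*/\tau)\Sgn\mu_*\le 0$ says exactly that $g'(\mu_*)\,\Sgn\mu_*\le 0$, i.e. $g'(1)\le 0$ when $\mu_*=1$ and $g'(-1)\ge 0$ when $\mu_*=-1$. Since $g$ is convex, $g'$ is nondecreasing, so in the first case $g'\le 0$ throughout $[-1,1]$ and in the second $g'\ge 0$ throughout $[-1,1]$; either way $g$ is monotone in the direction that makes $\mu_*$ its minimizer on $[-1,1]$. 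Consequently $g(\mu_z)\ge g(\mu_*)$ for every feasible $z$.

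Combining the two bounds yields $I_\tau(z)\ge g(\mu_z)\ge g(\mu_*)=f(\lambda_*,\mu_*)=I_\tau(z_*)$, the final equality because $z_*$ is special and thus saturates the Schwarz inequality. Hence $z_*$ minimizes $I_\tau$ and $(F,\chi)$ is calibrable. I expect the only delicate point to be the translation of the sign hypothesis into endpoint optimality of $g$: recognizing the present situation as the boundary (complementary-slackness) case, dual to the interior stationarity $\tau\lambda+\mu=0$ handled in Proposition \ref{El}. Everything else reduces to the monotonicity of a convex function on an interval together with the already-established Schwarz bound.
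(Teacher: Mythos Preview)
Your proof is correct and takes a genuinely different route from the paper's. The paper argues directly: for any competitor written as $z_*+\zeta$ with $\zeta$ a perturbation keeping $z_*+\zeta\in CH(F,\Omega,\Gamma)$, it expands
\[
I_\tau(z_*+\zeta)-I_\tau(z_*)-I_\tau(\zeta)=2\Bigl(\lambda+\frac{\mu}{\tau}\Bigr)\int_{\Gamma_F}[\zeta\cdot\nu]\,d\cH^{N-1},
\]
the simplification coming from the divergence theorem and the vanishing of $[\zeta\cdot\nu_F]$ on $\partial F\setminus\Gamma_F$. Since $|\mu|=1$ forces $[\zeta\cdot\nu]\,\Sgn\mu\le 0$ on $\Gamma_F$, the sign hypothesis makes the right-hand side nonnegative, and $I_\tau(\zeta)\ge 0$ finishes it.

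Your approach instead reuses the Schwarz lower bound and the constraint of Lemma~\ref{PCali} to reduce to minimizing the scalar function $g(\mu)$ over the feasible interval $[-1,1]$, and then reads the sign hypothesis as the first-order optimality condition at the endpoint $\mu_*$. This is more structural: it makes transparent that Lemma~\ref{PCal} is exactly the boundary (complementary-slackness) counterpart of Proposition~\ref{Pcon}, whose interior stationarity condition is $\tau\lambda+\mu=0$. The paper's expansion is quicker and self-contained, but your argument better exposes the unity between the two results.
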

\begin{proof}
Let us suppose that $z_* \in SCH(F,\Omega,\Gamma)$ and $\di z_* =\lambda$, $[z_*\cdot\nu] =\mu$. 
We take a test vector field $\zeta$ such that $z_*+ \zeta \in CH(F,\Omega,\Gamma)$. Thus, in particular, $[\zeta\cdot \nu_F] = 0$ on $\d F \setminus \Gamma$, but $[\zeta\cdot\nu]\Sgn \mu \le 0$ on $\Gamma_F$. 

After having performed simple computations and the integration by parts, we arrive at
\begin{align}\label{l6.4eq1}
    I_\tau(z_*+ \zeta) -I_\tau(z_*)- I_\tau(\zeta) ~ & =
    2\lambda\int_F \di \zeta\,dx + {\frac{2\mu}{\tau}} \int_{\Gamma_F} [\zeta\cdot\nu]\,d\cH^{N-1} 
    \nonumber
    \\
    & =
2\left( \lambda + \frac\mu\tau \right)\int_{\Gamma_F} [\zeta\cdot\nu]\,d\cH^{N-1}.
\end{align}
Thus, combining this information with $(\lambda + \mu/\tau) \Sgn \mu \le 0$ and  $[\zeta\cdot\nu]\Sgn \mu \le 0$ on $\Gamma_F$
yields that the right-hand-side in (\ref{l6.4eq1}) is positive. Hence, our claim follows.
\end{proof}
\begin{thm} \label{CO} 
Let $(F,\chi)$ be calibrable. 
 Assume that \eqref{4f1} is minimized under \eqref{4f2} at some $(\lambda_0,\mu_0)$ with $|\mu_0|\leq 1$.
 Assume that there is $z_* \in SCH(F,\Omega,\Gamma)$ with $[z_*\cdot\nu]=\mu_0$.
 Then $(F,\chi)$ is $(\tau,\Gamma)$-coherent.
\end{thm}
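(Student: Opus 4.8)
The plan is to show that the given special Cahn--Hoffman field $z_*$ is itself a minimizer of $I_\tau$ satisfying the coherency balance $\tau\lambda_{z_*}+\mu_{z_*}=0$, so that it directly witnesses $(\tau,\Gamma)$-coherency. The heart of the matter is to identify the pair $(\di z_*,\,[z_*\cdot\nu])$ with the constrained minimizing pair $(\lambda_0,\mu_0)$ of \eqref{4f1}--\eqref{4f2}, after which Proposition~\ref{El} and Proposition~\ref{Pcon} do the rest.

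First I would record that, since $z_*\in SCH(F,\Omega,\Gamma)$, both $\di z_*$ and $[z_*\cdot\nu]$ are constant; write $\lambda_*:=\di z_*$ on $F$, and recall $[z_*\cdot\nu]=\mu_0$ on $\Gamma_F$ by hypothesis. Applying Lemma~\ref{PCali} to $z_*$, for which the averages coincide with these constants, gives
\[
\lambda_*\,|F| = \cH^{N-1}(\partial_+ F)-\cH^{N-1}(\partial_- F)+\mu_0\,\cH^{N-1}(\Gamma_F),
\]
which is exactly the constraint \eqref{4f2} evaluated at $\mu=\mu_0$, with $a=|F|$, $b=\cH^{N-1}(\Gamma_F)$ and $c=\cH^{N-1}(\partial_+ F)-\cH^{N-1}(\partial_- F)$. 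Since \eqref{4f2} determines $\lambda$ uniquely from $\mu$, and $(\lambda_0,\mu_0)$ satisfies \eqref{4f2} with the same $\mu_0$, I conclude $\lambda_*=\lambda_0$.

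Next, because $(\lambda_0,\mu_0)$ minimizes \eqref{4f1} subject to \eqref{4f2}, Proposition~\ref{El} yields $\tau\lambda_0+\mu_0=0$, hence $\tau\lambda_*+\mu_0=0$. Now $z_*\in SCH(F,\Omega,\Gamma)\subset CH(F,\Omega,\Gamma)$ has constant divergence $\lambda_*$ and constant trace $\mu_0$ obeying this balance, so Proposition~\ref{Pcon} applies and shows that $z_*$ is a minimizer of $I_\tau$. Consequently $z_*$ is a minimizing Cahn--Hoffman field with $\tau\lambda_{z_*}+\mu_{z_*}=\tau\lambda_*+\mu_0=0$, which is precisely the defining property of $(\tau,\Gamma)$-coherency. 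The assumption $|\mu_0|\le1$ is what keeps such a boundary trace admissible, consistent with the postulated existence of $z_*$, while calibrability places us in the regime where the constant-divergence, constant-trace ansatz is the operative one.

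I expect the only genuine subtlety to be the identification $\lambda_*=\lambda_0$ together with the passage from ``admissible field'' to ``true minimizer'': the former rests on \eqref{4f2} pinning $\lambda$ down from $\mu$ through Lemma~\ref{PCali}, and the latter is exactly the content of Proposition~\ref{Pcon}, whose proof feeds back through the Schwarz lower bound, Lemma~\ref{PCali}, and Proposition~\ref{El}. Once these two points are secured, the coherency relation $\tau\lambda_{z_*}+\mu_{z_*}=0$ is immediate and the proof closes.
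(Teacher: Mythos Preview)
Your proof is correct and follows the same line as the paper's, only with the details spelled out: the paper simply says ``This is just an application of Proposition~\ref{El},'' leaving implicit exactly the chain you make explicit --- Lemma~\ref{PCali} to pin down $\lambda_*=\lambda_0$ from the shared $\mu_0$, Proposition~\ref{El} to get $\tau\lambda_0+\mu_0=0$, and Proposition~\ref{Pcon} to upgrade $z_*$ to a genuine minimizer of $I_\tau$. Your remark that the calibrability hypothesis is not actually used in the argument is apt; indeed, the existence of such a $z_*$ already forces calibrability once Proposition~\ref{Pcon} shows it minimizes $I_\tau$.
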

\begin{proof}
This is just an application of Proposition \ref{El}.
 The assumption $|\mu_0|\leq 1$ is a necessary condition so that $[z_*\cdot\nu]=\mu_0$ since $\|z_*\|_\infty \leq 1$.
\end{proof} 

The property of coherency heavily depends on geometry of $\Gamma$.
 Here is a conjecture.
\begin{con}\label{e-con} 
If $\Gamma$ is strictly mean-convex near $F$, then an admissible facet $(F,\chi)$ is $(\tau,\Gamma)$-coherent for all $\tau>0$.
Here, when we say that $\Gamma$ is strictly mean-convex, we mean that there is a positive constant $\gamma_0$ such that $\kappa \geq \gamma_0$ on $\Gamma$, where $\kappa$ is the inward mean curvature of $\Gamma$ in $\partial\Omega$.

More
generally, if $\inf_{x\in\Gamma \cap F} \kappa(x) > -1/\tau,$ then we expect that $(F,\chi)$ is $(\tau,\Gamma)$-coherent.
\end{con}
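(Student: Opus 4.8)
The plan is to bypass Theorem \ref{CO} and instead produce, by hand, a special Cahn--Hoffman field that realizes the unconstrained optimum of the quadratic problem, and then to invoke Proposition \ref{Pcon}. First I would read off the target pair $(\lambda_0,\mu_0)$ from Lemma \ref{PCali} and Proposition \ref{El}: writing $c=\cH^{N-1}(\partial_+F)-\cH^{N-1}(\partial_-F)$, the minimizer of \eqref{4f1} under \eqref{4f2} (with $a=|F|$, $b=\cH^{N-1}(\Gamma_F)$) satisfies $\tau\lambda_0+\mu_0=0$ together with $\lambda_0|F|=c+\mu_0\cH^{N-1}(\Gamma_F)$, which solve to $\mu_0=-c\tau/(|F|+\tau\cH^{N-1}(\Gamma_F))$ and $\lambda_0=-\mu_0/\tau$. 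If I can exhibit $z_*\in SCH(F,\Omega,\Gamma)$ with $\di z_*\equiv\lambda_0$, $[z_*\cdot\nu]\equiv\mu_0$ on $\Gamma_F$, and $\|z_*\|_\infty\le1$, then Proposition \ref{Pcon} shows $z_*$ minimizes $I_\tau$; being a special field it witnesses calibrability, and the identity $\tau\lambda_*+\mu_*=0$ is exactly $(\tau,\Gamma)$-coherency. So everything reduces to constructing $z_*$, and in particular to the pointwise bound $|z_*|\le1$ (which already forces the necessary condition $|\mu_0|\le1$ at $\Gamma_F$).

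The construction I would attempt is geometric. Near the free boundary $\partial F\cap\Omega$ I would build $z_*$ from the signed distance to $\partial F$, so that its normal trace equals $\gamma\chi=\pm1$ and its divergence is the prescribed constant $\lambda_0$, as in the classical calibration of convex facets. Near $\Gamma_F$ I would pass to boundary-fitted coordinates $x=y(\sigma)+d\,N(\sigma)$ with $N=\nabla d=-\nu$, and fix the normal component $b(d)$ of $z_*=b(d)\nabla d$ by the constant-divergence relation $b'(d)+b(d)\,\Delta d=\lambda_0$, $b(0)=-\mu_0$. Differentiating the distance function gives $\Delta d\big|_{\Gamma}=-\kappa$, hence $b'(0)=\lambda_0-\mu_0\kappa=-\mu_0(1/\tau+\kappa)$, and the two pieces would be glued by a partition of unity with the divergence corrected by an added divergence-free tangential field. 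The role of the hypothesis is then transparent at $\Gamma$: since $\tfrac{d}{dd}|b|^2\big|_{0}=2\,b(0)b'(0)=2\mu_0^2(1/\tau+\kappa)$, the condition $\kappa>-1/\tau$ forces $|z_*|=|b|$ to increase as one moves from $\Gamma_F$ into $F$, i.e.\ the field travels monotonically from $|z_*|=|\mu_0|\le1$ on $\Gamma_F$ toward saturation $|z_*|=1$ on $\partial F\cap\Omega$ — exactly the behavior that prevents the overshoot past $1$ responsible for detachment.

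The main obstacle is this pointwise unit bound in full generality. The clean normal-ODE picture above is exact only in the radially symmetric case of Proposition \ref{s5.2-pro5.3}, where $z_*=\varrho(r)\,\bne_r$ with $\varrho(r)=\lambda_0 r/N+C\,r^{1-N}$ is forced and $\sup_r|\varrho|\le1$ can be checked against the threshold $\kappa>-1/\tau$ by direct computation. For a general admissible facet the normal component couples to the evolving mean curvature $\Delta d$ of the parallel surfaces and to the tangential correction needed to keep $\di z_*\equiv\lambda_0$, so monotonicity of $|z_*|$ along the normal is no longer automatic. I expect the decisive step to be a maximum-principle or sub/supersolution comparison for the divergence equation, in which $\kappa>-1/\tau$ supplies the differential inequality confining $|z_*|$ to the unit ball; carrying this out for arbitrary Lipschitz facet geometry, and patching it to the interior calibration without losing the bound, is precisely why the statement is posed as a conjecture. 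The strictly mean-convex case $\kappa\ge\gamma_0>0$ is then simply the uniform instance $\kappa>0>-1/\tau$, valid for every $\tau>0$.
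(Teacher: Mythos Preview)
The statement is labeled and treated as a \emph{conjecture} in the paper; no proof is offered. What the paper does prove is a partial result (Theorem~\ref{MC} and its scaling refinement Corollary~\ref{cor6.1}) under the extra hypothesis that some minimizer $z_0$ of $I_\tau$ extends as a $C^2$ function in a neighbourhood of $\Gamma_F$. That argument proceeds quite differently from yours: it starts from an abstract minimizer $z_0$, sets $\Sigma=\{x\in\Gamma_F:[z_0\cdot\nu]=1\}$, and compares $I_\tau(z_0)$ with $I_\tau(\varphi_\varepsilon z_0)$ where $\varphi_\varepsilon=\min(1,1-\varepsilon+d_\Sigma)$ shaves $z_0$ near $\Sigma$. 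A boundary expansion using the decomposition $\di z=\hbox{div}_T z+(m\cdot\nabla)(z\cdot m)$ and the identity $\kappa=\hbox{div}_T\nu$ shows $I_\tau(z_0)-I_\tau(\varphi_\varepsilon z_0)>0$ for small $\varepsilon$ once $\kappa>-1/\tau$, contradicting minimality unless $\Sigma=\emptyset$. With $|[z_0\cdot\nu]|<1$ on $\Gamma_F$ in hand, the first variation of $I_\tau$ along normal perturbations gives $\tau\,\di z_0+[z_0\cdot\nu]=0$ on $\Gamma_F$.

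Your route---manufacture a special Cahn--Hoffman field $z_*\in SCH(F,\Omega,\Gamma)$ with $\di z_*\equiv\lambda_0$, $[z_*\cdot\nu]\equiv\mu_0$, $\tau\lambda_0+\mu_0=0$, and then invoke Proposition~\ref{Pcon}---is logically sound and, if it succeeded, would yield calibrability as well as coherency, which is more than Theorem~\ref{MC} asserts. Your normal-ODE computation near $\Gamma_F$ is correct and does isolate the role of $\kappa>-1/\tau$ at the infinitesimal level. But the gap you yourself name---securing $\|z_*\|_\infty\le 1$ globally on $F$ after patching the boundary and interior constructions---is precisely what neither you nor the paper resolve in general. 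The paper sidesteps it by \emph{assuming} regularity of the minimizer rather than by constructing one; you would need a maximum-principle-type argument that is not supplied. So your proposal is a coherent strategy, not a proof, and the paper likewise does not prove the conjecture as stated.
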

%
We shall show this conjecture with extra regularity assumptions on a minimizer of $I_\tau(z)$.
\begin{thm} \label{MC} 
Assume that $\partial\Omega$ is at least $C^2$ in a neighborhood of $\Gamma$ and the mean curvature $\kappa$ is estimated from below, $\kappa \geq \gamma_0$ where 
$\gamma_0>1/2-1/\tau$.
Let $z_0 \in CH(F,\Omega,\Gamma)$ be a minimizer of $I_\tau$.
Assume that $z_0$ can be extended as a $C^2$ function in a neighborhood $U$ of $\Gamma_F$ in $\mathbb{R}^N$. Then, the following properties hold:
\begin{enumerate}
\item[(i)] $|[z_0\cdot\nu]|<1$ on $\Gamma_F$,
\item[(i\!i)] $\tau\di z_0 + [z_0\cdot\nu]=0$ on $\Gamma_F$.
 In other words, $(F,\chi)$ is $(\tau,\Gamma)$-coherent.
\end{enumerate}
\end{thm}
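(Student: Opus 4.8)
The plan is to prove (i) first and then obtain (ii) almost for free. The single analytic engine is the first variation of $I_\tau$ over the convex set $CH(F,\Omega,\Gamma)$ at the minimizer $z_0$, which I would combine with a boundary-point (Hopf-type) computation in which the mean curvature enters through the identity $\di\nu=\kappa$ on $\partial\Omega$. Concretely, for any admissible direction $\zeta$ (so that $z_0+t\zeta\in CH(F,\Omega,\Gamma)$ for small $t>0$) convexity of $I_\tau$ yields
\[
\int_F \di z_0\,\di\zeta\,dx+\frac1\tau\int_{\Gamma_F}[z_0\cdot\nu][\zeta\cdot\nu]\,d\cH^{N-1}\ge 0,
\]
with equality whenever $-t$ is also admissible. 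Integrating the bulk term by parts and using $[\zeta\cdot\nu_F]=0$ on $\partial F\cap\Omega$ and $[\zeta\cdot\nu]=0$ on $(F\cap\partial\Omega)\setminus\Gamma$ (exactly as in \eqref{rn-div} and Corollary \ref{s5c1}) turns this into a condition supported on $\Gamma_F$. On the part of $\Gamma_F$ where $|[z_0\cdot\nu]|<1$ both signs of $[\zeta\cdot\nu]$ are admissible, so the inequality becomes the equality $\tau\di z_0+[z_0\cdot\nu]=0$. Hence (ii) follows from (i): once the constraint is strictly inactive along $\Gamma_F$, the Euler--Lagrange natural boundary condition for $I_\tau$ is precisely the coherency relation, which is the content of Proposition \ref{El} and Theorem \ref{CO}.

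The core is therefore (i), which I would prove by contradiction. Suppose $|[z_0\cdot\nu](y_0)|=1$ at some $y_0\in\Gamma_F$; replacing $z_0$ by $-z_0$ we may assume $[z_0\cdot\nu](y_0)=1$. Since $\|z_0\|_\infty\le1$ and $z_0(y_0)\cdot\nu(y_0)=1$, the equality case of Cauchy--Schwarz forces $z_0(y_0)=\nu(y_0)$, so that $y_0$ is a maximum of the scalar $a:=z_0\cdot\nu$, and of $|z_0|^2$, over $F$. Using the assumed $C^2$ extension I would extend $\nu$ as $\nabla\rho$ with $\rho$ the signed distance to $\partial\Omega$ (so that $\di\nu=\kappa$ on $\partial\Omega$ and $\partial_\nu\nu=0$), write $z_0=a\nu+w$ with $w\cdot\nu\equiv 0$, and compute at $y_0$
\[
\di z_0(y_0)=\partial_\nu a(y_0)+\kappa(y_0)\,a(y_0)+\di w(y_0).
\]
As $y_0$ is a boundary maximum of $a$ and $F$ lies on the inner side of $\partial\Omega$, the outward normal derivative obeys $\partial_\nu a(y_0)\ge 0$; and since $y_0$ maximizes $|z_0|^2=a^2+|w|^2$ with $w(y_0)=0$, the non-positivity of the tangential Hessian controls $\di w(y_0)$ from below. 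This is intended to yield a lower bound of the form $\di z_0(y_0)\ge \kappa(y_0)-\tfrac12$.

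For the opposing upper bound I would invoke minimality. On the open set $\{|z_0|<1\}$ the interior optimality forces $\di z_0$ to be constant (Proposition \ref{s5-pr-cannon}(b), Corollary \ref{s5c1}), and $y_0$ is a limit of such points, so $\di z_0(y_0)$ equals that constant by continuity of $\di z_0$. Testing the one-sided variational inequality with perturbations that decrease $[z_0\cdot\nu]$ near $y_0$ (the only admissible direction there, since $[z_0\cdot\nu]=1$) gives $\di z_0(y_0)+\tfrac1\tau\le 0$, i.e.\ $\di z_0(y_0)\le -1/\tau$. Comparing the two bounds produces $\kappa(y_0)-\tfrac12\le -1/\tau$, that is $\kappa(y_0)\le\tfrac12-1/\tau$, contradicting $\kappa\ge\gamma_0>\tfrac12-1/\tau$. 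Thus $|[z_0\cdot\nu]|<1$ on $\Gamma_F$, which is (i), and (ii) follows as above.

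The step I expect to be the main obstacle is exactly the lower estimate for the tangential contribution $\di w(y_0)$ at the maximum. The terms $\kappa(y_0)$ and $\partial_\nu a(y_0)\ge 0$ are favorable and of the right size, but the tangential divergence of $w$ is only accessible through the constraint $\|z_0\|_\infty\le 1$ together with the $C^2$ regularity, which quantify how fast $z_0$ may rotate away from $\nu$ along $\Gamma_F$. It is this estimate that costs the factor $\tfrac12$ and prevents reaching the sharp threshold $-1/\tau$ predicted by Conjecture \ref{e-con}; making it rigorous is the technical heart of the proof.
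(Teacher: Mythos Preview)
Your derivation of (ii) from (i) is correct and essentially what the paper does: once the constraint $|[z_0\cdot\nu]|\le 1$ is strictly inactive on $\Gamma_F$, two-sided variations are admissible and the natural boundary condition $\tau\di z_0+[z_0\cdot\nu]=0$ drops out of the first variation.

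For (i), however, your pointwise Hopf-type strategy has a genuine gap, and it is precisely the step you flag. The upper bound $\di z_0(y_0)\le -1/\tau$ is in fact obtainable from the one-sided first variation with $\zeta=-\psi_\delta\nu$ (the $C^2$ regularity makes the bulk term $O(\delta^N)$ while the boundary term is $O(\delta^{N-1})$). The problem is the lower bound $\di z_0(y_0)\ge\kappa(y_0)-\tfrac12$. Writing $z_0=a\nu+w$ and $\di z_0=\partial_\nu a+\kappa a+\operatorname{div}_T w$, the constraint $|z_0|\le1$ at the boundary maximum yields only $(\partial_T b)^2\le -\partial_{TT}a$ (in a tangential frame), which bounds the \emph{magnitude} of the tangential gradient of $w$ in terms of the second tangential derivatives of $a$, not its sign. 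Concretely, in two dimensions one can take $a=1-\tfrac{C}{2}\theta^2$, $b=-\sqrt{C-\epsilon}\,\theta$ along $\Gamma_F$, which satisfies $a^2+b^2=1-\epsilon\theta^2+O(\theta^4)\le1$ while producing $\operatorname{div}_T w(y_0)=-\sqrt{C-\epsilon}/R$, arbitrarily negative. So the constraint alone does not yield the lower bound; minimality must be used again, but at $y_0$ tangential variations are inadmissible to first order (since $|z_0+t\zeta|^2=1+t^2|\zeta|^2+O(t^3)$ when $\zeta\perp\nu$), so the one-sided first variation gives no further information there. Your appeal to constancy of $\di z_0$ on $\{|z_0|<1\}$ also relies on $y_0$ being a limit of such points, which is not guaranteed.

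The paper sidesteps all of this by an \emph{integral} comparison rather than a pointwise one: it sets $\Sigma=\{[z_0\cdot\nu]=1\}\subset\Gamma_F$, builds the competitor $z_\varepsilon=\varphi_\varepsilon z_0$ with $\varphi_\varepsilon=\min(1,1-\varepsilon+d_\Sigma)$, and expands $I_\tau(z_0)-I_\tau(z_\varepsilon)$ in $\varepsilon$. The curvature enters through $\operatorname{div}_T\nu=\kappa$ via the decomposition $\di z=\operatorname{div}_T z+(m\cdot\nabla)(z\cdot m)$, and the offending $\tfrac12$ arises from the quadratic term $\int_{\Sigma_\varepsilon}(\nabla\varphi_\varepsilon\cdot z_0)^2=\varepsilon\,\mathcal H^{N-1}(\Sigma)+O(\varepsilon^2)$, \emph{not} from a pointwise geometric bound on $z_0$. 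This yields $I_\tau(z_0)-I_\tau(z_\varepsilon)\ge\varepsilon\int_\Sigma(2\kappa-1+2/\tau)\,d\mathcal H^{N-1}+O(\varepsilon^2)$, which is positive under $\kappa>\tfrac12-1/\tau$. Replacing your pointwise scheme by this energy comparison is what closes the argument.
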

\begin{proof} 
Let $d_\Sigma$ be the distance function from a closed subset $\Sigma$ of $\Gamma_F$.
 We recall a general formula
\begin{equation} \label{4DF} 
	\di z = \hbox{div}_T z + (m \cdot \nabla)(z \cdot m),
\end{equation}
where $\hbox{div}_T$ is the surface divergence on a hypersurface $\{d_\Sigma=c\}$ and $m=-\nabla d_\Sigma$, which is normal to $\{d_\Sigma=c\}$.
 Indeed,
\begin{align*}
	& \hbox{div}_T z = \mathrm{tr}(I - m \otimes m)\nabla z \\
	& \mathrm{tr}(m \otimes m\; \nabla z) = \sum_{i,j} m_i m_j \partial_j z_i
	 = (m \cdot \nabla)(z \cdot m)
\end{align*}
because $(m \cdot \nabla)m=0$.
 This implies the desired decomposition \eqref{4DF}.
 The formula \eqref{4DF} holds for $\text{a.e.}\ c$ and for $\mathcal{H}^{N-1}\text{-a.e.}\ x\in\{d_\Sigma=c\}$ if $d_\Sigma$ is not $C^1$, but Lipschitz continuous.
%

\medskip\noindent(i) We set $\Sigma=\{ x\in\Gamma_F \mid [z_0\cdot\nu]=1\}$ and set
\[
	z_\varepsilon = \varphi_\varepsilon z_0 \quad\text{with}\quad 
	\varphi_\varepsilon = \min(1,1-\varepsilon+d_\Sigma),
\]
for $\varepsilon>0$.
 We shall prove that $I_\tau(z_\varepsilon)<I_\tau(z_0)$ for sufficiently small $\varepsilon>0$ assuming that $\Sigma$ is non empty.
 We calculate
\begin{align*}
	I_\tau(z_0)-I_\tau(z_\varepsilon)
	& = \left\{ \int_{\Sigma_\varepsilon} |\di z_0|^2 \,dx
		- \int_{\Sigma_\varepsilon} |\di z_\varepsilon|^2 \,dx \right\} \\
	& + \frac{1}{\tau}\left\{ \int_{\Gamma_F} ([z_0\cdot\nu])^2\, d\mathcal{H}^{N-1}
		- \int_{\Gamma_F} (z_\varepsilon\cdot\nu)^2 \,d\,\mathcal{H}^{N-1} \right\} \\
	& =I + I\!\!I,
\end{align*}
where $\Sigma_\varepsilon = \left\{ x\in F \mid d_\Sigma(x) < \varepsilon \right\}$.
To estimate $I$, we calculate
\[
	\di z_\varepsilon = \varphi_\varepsilon \di z_0 
	+ \nabla\varphi_\varepsilon \cdot z_0 
\]
and observe that
\[
	I \geq - 2 \int_{\Sigma_\varepsilon} (\varphi_\varepsilon \di z_0) \nabla\varphi_\varepsilon \cdot z_0 \,dx 
	- \int_{\Sigma_\varepsilon} (\nabla\varphi_\varepsilon \cdot z_0)^2\, dx 
	= I\!\!I\!\!I + I\!V.
\]
We use the decomposition formula \eqref{4DF} 
and the fact $z\in C^1(U)$ to get
\begin{align*}
	I\!\!I\!\!I/2
	& = - \int_{\Sigma_\varepsilon} (\varphi_\varepsilon \di z_0) \nabla\varphi_\varepsilon \cdot z_0\, dx 
	= \int_{\Sigma_\varepsilon} (\varphi_\varepsilon \hbox{div}_T z_0)m \cdot z_0 \, dx \\
	& + \int_{\Sigma_\varepsilon} \left(\varphi_\varepsilon(m \cdot \nabla)(z_0 \cdot m)\right)(z_0 \cdot m) \,dx \\
	& = \varepsilon \int_\Sigma (\hbox{div}_T \nu)([z_0\cdot\nu]) \,d\mathcal{H}^{N-1} + O(\varepsilon^2) \\
	& + (1-\varepsilon) \int_{\Sigma_\varepsilon} (m\cdot\nabla)\left(\frac{(z_0 \cdot m)^2}{2}\right)\, dx + O(\varepsilon^2) \quad\text{as}\quad \varepsilon\to 0.
\end{align*}
Since $[z_0\cdot\nu]=1$ on $\Sigma$, $|z_0| \leq 1$ in $F$ and  
$z_0\in C^1(U)$, then
\[
	 \int_{\Sigma_\varepsilon} (m\cdot\nabla)(z_0 \cdot m)^2 \,dx \geq  O(\varepsilon^2).
\]
        Finally, since $\kappa = \di_T \nu$, we observe that
\[
	I\!\!I\!\!I \geq 2 \varepsilon \int_\Sigma \kappa \,d\mathcal{H}^{N-1} + O(\varepsilon^2).
\]
It is easy to see that
\[
    I\!V = - \varepsilon \int_\Sigma (z_0\cdot \nu)^2 \,d\mathcal{H}^{N-1} + O(\varepsilon^2) 
	= - \varepsilon \int_\Sigma \,d\mathcal{H}^{N-1} + O(\varepsilon^2).
\]
Thus, we observe that 
\[
	I \geq \varepsilon \left( \int_\Sigma 2\kappa\,d\mathcal{H}^{N-1} 
	- \int_\Sigma \,d\mathcal{H}^{N-1} \right) + O(\varepsilon^2).
\]
It is easy to estimate 
\begin{align*}
	\tau I\!\!I 
	& = \int_{\Gamma_F} \left((z_0 - z_\varepsilon)\cdot\nu\right)\left((z_0 + z_\varepsilon)\cdot\nu\right) \,d\mathcal{H}^{N-1} \\
 & = \varepsilon \int_\Sigma ([z_0\cdot\nu]) \cdot (2 [z_0\cdot\nu]) \,d\mathcal{H}^{N-1}  + O(\varepsilon^2) \\
	& = 2 \varepsilon \int_\Sigma\, d\mathcal{H}^{N-1} + O(\varepsilon^2).
\end{align*}
Thus,
\[
	I+I\!\!I \geq \varepsilon \left( \int_\Sigma 2\kappa\,d\mathcal{H}^{N-1}
	+\left(\frac{2}{\tau}-1\right)
	\int_\Sigma \,d\mathcal{H}^{N-1}\right) + O(\varepsilon^2).
\]
As a result, if $\kappa \geq \gamma_0$ with $\gamma_0 > 1/2-1/\tau$, then $I_\tau(z_0)-I_\tau(z_\varepsilon)>0$ for small $\varepsilon$.
 We thus prove that $z_0 \cdot \nu<1$ if $z_0$ is a minimizer.
 The inequality $z_0 \cdot \nu >-1$ can be proved similarly.

\medskip\noindent(i\!i) If $|[z_0\cdot\nu]|<1$ on $\Gamma_F$, then for any $h\in C^1(F)$ such that $\left|(z_0+\varepsilon h)\cdot\nu\right|<1$ on $\Gamma_F$ and $|z+\varepsilon h|\leq 1$ in $F$.
 The first condition does not restrict $h_\nu=h\cdot\nu$.
 The second condition restricts the tangential component $h_T=h-(h\cdot\nu)\nu$ so that $|z_0 + \varepsilon h_\nu\cdot\nu + \varepsilon h_T|\leq 1$.
 Since $z_0$ is the minimizer, we obtain that 
\begin{align*}
	0 & = \frac{1}{2} \left.\frac{d}{d\varepsilon}\right|_{\varepsilon=0} I_\tau(z_0+\varepsilon h) 
	= \int_F \di z_0 ~ \di h\, dx 
	+ \frac 1\tau\int_{\Gamma_F}([z_0\cdot\nu])h_\nu\, d\mathcal{H}^{N-1} \\
	& = - \int_F \nabla\di z_0\cdot h\,dx
    + \int_{\partial F} \di z_0 \, (h \cdot \nu_F) \, d\mathcal{H}^{N-1}
	+ \frac 1\tau \int_{\Gamma_F}([z_0\cdot\nu])h_\nu \,d\mathcal{H}^{N-1}. 
\end{align*}
We take arbitrary $C^1$ function $f$ near $\Gamma_F$ and 
pick a test function $ h \in C^1(F) $, satisfying 
$ h_\nu = f $ on $ \Gamma_F $. We require that the support of $ h $ is in an $ \delta $-neighborhood of $ \Gamma_F $, and moreover, the
tangential component is controlled by $f$.
 The first term of above formula is $O(\delta)$ as $\delta\to 0$, as a result
\[
	\int_{\Gamma_F} (\tau\di z_0 + [z_0\cdot\nu])f\,d\mathcal{H}^{N-1} = 0.
\]
This implies that
\[
	\tau\di z_0 + [z_0\cdot\nu] = 0 \quad\text{on}\quad \Gamma_F.
\]
%
\end{proof} 

Actually, we could relax the assumptions of this Theorem without weakening the claim.
\begin{cor}\label{cor6.1}
 The conclusion of Theorem \ref{MC} holds if we assume that the mean curvature $\kappa$ is estimated as follows,
 $$
 \inf_{\Gamma_F} \kappa> - \frac 1\tau.
 $$
\end{cor}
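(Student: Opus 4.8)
The plan is to deduce the Corollary from Theorem \ref{MC} by a dilation that trades the size of $\tau$ against the size of $\kappa$, exploiting the scaling computation behind Lemma \ref{le6.1}. Fix $\sigma>0$ and, following \eqref{dfAtau}, pass from $(F,\Omega,\Gamma)$ to the dilated data $(F_\sigma,\Omega_\sigma,\Gamma_\sigma)$, setting $\chi_\sigma(\sigma x)=\chi(x)$ and, for a vector field $z$ on $F$, $z^\sigma(y)=z(y/\sigma)$ on $F_\sigma$. The pointwise constraints defining $CH(F,\Omega,\Gamma)$ involve only the values of $z$ and the (scale-invariant) normal directions, so $z\mapsto z^\sigma$ is a bijection between $CH(F,\Omega,\Gamma)$ and $CH(F_\sigma,\Omega_\sigma,\Gamma_\sigma)$. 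Since $\di_y z^\sigma=\frac1\sigma(\di z)(\cdot/\sigma)$ while the normal trace $[z\cdot\nu]$ is unchanged, a direct change of variables (as in the proof of Lemma \ref{le6.1}) gives $I_{\sigma\tau}(z^\sigma)=\sigma^{N-2}I_\tau(z)$ provided the parameter is rescaled to $\tau':=\sigma\tau$. Consequently $z_0$ minimizes $I_\tau$ on $(F,\Omega,\Gamma)$ if and only if $z_0^\sigma$ minimizes $I_{\tau'}$ on $(F_\sigma,\Omega_\sigma,\Gamma_\sigma)$, and the $C^2$ extension of $z_0$ near $\Gamma_F$ becomes a $C^2$ extension of $z_0^\sigma$ near $\Gamma_{F_\sigma}$.

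Next I would record how the hypotheses transform. The mean curvature of $\Gamma_\sigma$ at the image of $x$ equals $\kappa(x)/\sigma$, and $\partial\Omega_\sigma$ inherits the $C^2$ regularity near $\Gamma_\sigma$. Writing $m:=\inf_{\Gamma_F}\kappa$, the hypothesis of the Corollary reads $m>-1/\tau$, i.e. $m+1/\tau>0$. The curvature threshold of Theorem \ref{MC} for the dilated problem is $\inf_{\Gamma_{F_\sigma}}\kappa_\sigma>\frac12-\frac1{\tau'}$, which after substituting $\kappa_\sigma=\kappa/\sigma$ and $\tau'=\sigma\tau$ becomes $\frac{m+1/\tau}{\sigma}>\frac12$, i.e. $\sigma<2\,(m+1/\tau)$. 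Because $m+1/\tau>0$, any $\sigma$ in the nonempty interval $(0,\,2(m+1/\tau))$ makes the dilated data satisfy exactly the strengthened curvature assumption of Theorem \ref{MC}.

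With such $\sigma$ fixed, I would apply Theorem \ref{MC} to $(F_\sigma,\Omega_\sigma,\Gamma_\sigma)$ with parameter $\tau'$ and minimizer $z_0^\sigma$, obtaining $|[z_0^\sigma\cdot\nu]|<1$ on $\Gamma_{F_\sigma}$ and $\tau'\di z_0^\sigma+[z_0^\sigma\cdot\nu]=0$ there. Undoing the dilation yields the Corollary: the normal trace is scale-invariant, so $|[z_0\cdot\nu]|<1$ on $\Gamma_F$, while at corresponding points $\tau'\di_y z_0^\sigma+[z_0^\sigma\cdot\nu]=\sigma\tau\cdot\frac1\sigma\di z_0+[z_0\cdot\nu]=\tau\di z_0+[z_0\cdot\nu]$, so the coherency identity $\tau\di z_0+[z_0\cdot\nu]=0$ holds on $\Gamma_F$. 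The only point demanding care — and the sole genuine obstacle — is the bookkeeping of the dilation: one must check that the normal trace $[z\cdot\nu]$ transforms invariantly while $\di$ picks up the factor $1/\sigma$, that the matching $\tau'=\sigma\tau$ is precisely what turns $I_{\tau'}(z^\sigma)$ into a constant multiple of $I_\tau(z)$ (so that the minimizer correspondence is exact), and that the curvature scaling $\kappa\mapsto\kappa/\sigma$ together with $m+1/\tau>0$ really clears the threshold $\frac12$. Once this is set up the computations are routine, and the substance of the Corollary is entirely contained in Theorem \ref{MC}.
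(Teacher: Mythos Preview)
Your proposal is correct and follows essentially the same scaling argument as the paper: dilate the domain so that the weakened curvature bound $\inf_{\Gamma_F}\kappa>-1/\tau$ becomes the stronger bound $>\tfrac12-\tfrac1{\tau'}$ required by Theorem \ref{MC}, then transfer the conclusions back. The paper's proof uses the scale factor $\sigma=\tau_0/\tau$ (with $\tau_0$ the new parameter), which matches your $\tau'=\sigma\tau$; your exposition is in fact tidier, since you explicitly verify the minimizer correspondence via $I_{\tau'}(z^\sigma)=\sigma^{N-2}I_\tau(z)$ and explicitly pull back both conclusions (i) and (i\!i), steps the paper leaves implicit.
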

\begin{proof}
 We fix $\tau =\tau_0$, we consider $\Omega_{\tau/\tau_0}$, (see formula (\ref{dfAtau})),
 a scaled domain. 
 We notice that $\kappa_{\tau/\tau_0}$, the mean curvature of $(\Gamma_F)_{\tau/\tau_0}$, is equal to $\frac{\tau_0}{\tau}\kappa.$ 
 Our condition $\inf_{\Gamma_F} \kappa> \frac12 -\frac 1{\tau_0}$ is equivalent to 
 $$
    \inf_{(\Gamma_F)_{\tau/\tau_0}} \frac{\tau}{\tau_0} \kappa_{{\tau}/{\tau_0}} > \frac12 -\frac 1{\tau_0}.
 $$
In other words, $\kappa_{\tau/\tau_0} > \frac{\tau_0}{2\tau} - \frac 1\tau.$  However, we may take arbitrary small $\tau_0$. This means that $\kappa$ may be as close to $-\frac 1\tau$, as we wish. Thus, the condition
$$
\inf_{\Gamma_F} \kappa > -\frac1 \tau
$$
is sufficient to guarantee existence of a Cahn--Hoffman vector field.

Suppose now that
$$
\inf_{\Gamma_F} \kappa > -\frac1 \tau,
$$
then there exists $\tau_0>0$ such that $\inf_{\Gamma_F} \kappa > \frac{\tau_0}{2\tau}-\frac1 \tau$. Now, by scaling we consider the problem in $\Omega_{\tau_0/\tau}$, then the mean curvature condition is equivalent to 
$$
\inf_{\Gamma_F} \frac{\tau_0}{\tau} \kappa_{\tau_0/\tau} >  \frac{\tau_0}{2\tau}-\frac1 \tau,
$$
because $\kappa = \frac{\tau_0}{\tau} \kappa_{\tau_0/\tau}$. In other words,
$$
\kappa_{\tau_0/\tau} >  \frac 12-\frac1 {\tau_0},
$$
as desired.
\end{proof}

\section{Instant facet formation in the one-dimensional problem}\label{sone}
Before considering any two dimensional configuration, we would like to present a simple one-dimensional warm-up problem. We assume that our data contain exactly one facet touching the boundary, where we specify the dynamic boundary condition. Our goal is to capture the behavior of facets by constructing explicit solutions. This task involves monitoring the boundary behavior of solutions.

For the sake of
simplicity, we consider only monotone initial condition $u_0$. We shall write 
\begin{equation}\label{def-chi}
\chi = 1\quad\hbox{if}\quad u_0\quad \hbox{is increasing and}\quad
\chi = -1\quad\hbox{if}\quad u_0\quad \hbox{is decreasing.}
\end{equation}


\begin{thm}\label{5thm1}
Let us suppose that $\Omega = (0,L)$, $\Gamma = \d\Omega$ and $u_0\in \Lip(\Omega)$. We assume that $u_0$ 
is strictly monotone on $[0,b_0]$, $b_0\in(0,L)$ and it has exactly one facet $([b_0,L],\chi)$  touching the boundary at $x=L$  and $v_0= u_0|_\Gamma$. 
 Then,\\
 1) 
 The vertical velocity of facet  $([b(t),L],\chi)$ is
\begin{equation}\label{staff}
 u_t = \frac{-\chi}{1+L-b(t)}.
\end{equation}
Facet $([b_0,L],\chi)$ calibrable and $(1,\Gamma)$-coherent. 
Moreover, if we write $h_r(t) = u(\cdot, t)|_{[b(t),L]}$, then $\frac {d h_r}{dt} = u_t$ and $h_r(0) = u(L,0)$. In addition, $b(t)$ is a solution to $h_r(t) = u_0(b(t))$, i.e. $b(t)= (u_0)^{-1}(h_r(t))$.\\
2) A new facet forms instantly at $x=0$, its initial velocity is $\chi$. If we denote by $a(t)$ the right endpoint of the new facet at $t>0$, then its velocity is given by
\begin{equation}\label{staf2}
 u_t  = \frac{\chi}{1+a(t)}.
\end{equation}
Facet $([0,a(t)],\chi)$ calibrable and $(1,\Gamma)$-coherent. 
Moreover, if we write $h_l(t) = u(\cdot, t)|_{[0,a(t)]}$, then $\frac{d h_l}{dt} = u_t$ and $h_l(0) = u(0,0)$. In addition, $a(t) = (u_0)^{-1}(h_l(t))$.
In particular, the velocity of facet $[0,a(t)],\chi)$ is continuous at $t=0$.\\
3) The unique solution $U(t)=(u(t), v(t))$ is given by formula (\ref{5r-sum}) below. In particular,
for all $t\ge 0$, $\gamma u(t) = v(t)$, in other words, $\gamma u_t = v_t$.
\end{thm}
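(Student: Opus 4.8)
The plan is to construct the solution explicitly, identify the accompanying vector field $z$ with the minimal section of $\partial E$, and then conclude by the uniqueness part of Theorem~\ref{Thm4.4}. The guiding observation is that on any interval where $u_0$ is strictly monotone one is forced to take $z=\chi$ by Proposition~\ref{s5prop5.5}(a), hence $\di z = z_x = 0$ and $u_t=0$ there; the evolution is therefore confined to the two facets. Accordingly, I would posit the ansatz
\begin{equation*}
u(x,t)=\begin{cases} h_l(t), & x\in[0,a(t)],\\ u_0(x), & x\in[a(t),b(t)],\\ h_r(t), & x\in[b(t),L], \end{cases}
\end{equation*}
together with $v(i,t)=\gamma u(i,t)$ for $i\in\{0,L\}$, where $a(t)=(u_0)^{-1}(h_l(t))$ and $b(t)=(u_0)^{-1}(h_r(t))$ follow from continuity of $u$ at the junctions, while $h_l,h_r$ remain to be determined.

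The second step is to build $z$ and verify it is the canonical section. On the middle interval set $z=\chi$; on each facet take $z$ affine, matching $z=\chi$ at the interior junction. Minimizing $\cE$ (equivalently the localized functional $I_\tau$ of Corollary~\ref{s5c1} with $\tau=1$) on the right facet $[b,L]$, the natural boundary condition $z_x(L)+z(L)=0$ produces the affine profile of slope $\di z=\lambda_r=-\chi/(1+L-b)$ with $[z\cdot\nu](L)=-\lambda_r$; the analogous computation on $[0,a]$ (natural condition $z_x(0)=z(0)$) gives $\di z=\lambda_l=\chi/(1+a)$ with $[z\cdot\nu](0)=-\lambda_l$. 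These are exactly the velocities \eqref{staff} and \eqref{staf2}, and the identities $\lambda+[z\cdot\nu]=0$ are the $(1,\Gamma)$-coherence conditions. Since the $z$ so defined is continuous, piecewise affine (hence in $X_2(\Omega)$), satisfies $z\in\Sgn(u_x)$ a.e.\ and $-[z\cdot\nu]\in\Sgn(\gamma u-v)$ (note $|[z\cdot\nu]|<1$ for $t>0$, which forces $\gamma u=v$), Theorem~\ref{Prop.rep_subd} gives $(-z_x,[z\cdot\nu])\in\partial E(U)$. Applying Proposition~\ref{Pcon} facet by facet, together with the localization of Corollary~\ref{s5c1}, identifies this $z$ with the minimizer of $\cE$, i.e.\ with the canonical selection; calibrability and $(1,\Gamma)$-coherence then follow from Definition~\ref{Cal}.

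The third step is the dynamics. Inserting $\di z$ and $[z\cdot\nu]$ into \eqref{lsta} yields $\frac{dh_r}{dt}=\lambda_r$ and $\frac{dh_l}{dt}=\lambda_l$, which upon substituting $b=(u_0)^{-1}(h_r)$, $a=(u_0)^{-1}(h_l)$ become a closed ODE system with data $h_r(0)=u_0(b_0)=u(L,0)$, $a(0)=0$, $h_l(0)=u_0(0)=u(0,0)$; its solution is precisely (\ref{5r-sum}). Evaluating \eqref{staf2} at $a=0$ gives initial velocity $\chi$, and continuity of $a\mapsto\chi/(1+a)$ yields continuity of the velocity at $t=0$. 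Since $|[z\cdot\nu]|<1$ at both endpoints for $t>0$, condition (b2) of Theorem~\ref{Prop.rep_subd} forces $\gamma u(t)=v(t)$, establishing part~3. Finally, because the constructed pair solves $\frac{dU}{dt}=-\partial_\tau^o E(U)$ with the prescribed datum, the uniqueness in Theorem~\ref{Thm4.4} shows it is \emph{the} solution; the case $\chi=-1$ is entirely symmetric.

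The main obstacle I anticipate is the instant facet formation at $x=0$: one must show that the nascent facet $[0,a(t)]$ is genuinely present for every $t>0$ and that the affine $z$ stays the minimal section as $a(t)\downarrow 0$, where $|z(0)|\uparrow 1$ and the coherent balance degenerates into the Neumann-type relation seen at $t=0$. This demands a careful well-posedness argument for the coupled ODEs through $t=0$ (where $(u_0)^{-1}$ is only continuous, not Lipschitz) and a verification that $t\mapsto U(t)$ is absolutely continuous across $t=0$, so that the explicit construction indeed meets the regularity required by Theorem~\ref{Thm4.4}.
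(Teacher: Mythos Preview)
Your proposal is correct and follows essentially the same route as the paper: fix $z=\chi$ on the strictly monotone middle, take $z$ affine on each facet, determine the free endpoint value by the one-dimensional minimization (which yields exactly your natural boundary conditions and hence the coherence identities), read off the velocities \eqref{staff}--\eqref{staf2}, and close the argument via the uniqueness in Theorem~\ref{Thm4.4}. The only substantive point the paper adds concerning the obstacle you flag is that the ODE for $h_r$ (and likewise $h_l$) has a right-hand side that, while possibly non-Lipschitz, is monotone in the unknown, which suffices for existence and uniqueness through $t=0$; this dispatches your anticipated difficulty in one line.
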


\begin{remark} Roughly speaking, the sign of the velocity of facet $([b(t),L],\chi)$ is opposite to the sign of the space derivative of solution $u$ on $(a(t),b(t))$, while in case of facet $([0,a(t)],\chi)$ the signs of its velocity and $ u_x$  on $(a(t),b(t))$ agree.

We notice that 
$a$ is a continuous function of time and so is the velocity of the facet at $x=0$.
\end{remark}
{\it Proof of Theorem \ref{5thm1}.} 
We are going to construct semi-explicit solutions from the information about the subdifferential $\d E(U)$. We will use the fact that the solution $U:[0,+\infty) \to H$ with the initial condition $ U(0) = U_0 \in D(\partial E) $ is a locally Lipschitz continuous function. Moreover, for all $t\ge0$, we have $U(t) \in D(\d E)$ and 
\begin{equation}\label{dd}
 \frac{d U^+}{dt} = - \mathcal{A}^o(U), \qquad\hbox{for all }t\ge 0,
\end{equation}
where $\mathcal{A}^o(U)$ is the canonical section of  $\d E(U)$. Once we construct $\mathcal{A}^o(U_0)$, we will argue 
about the formula for the solution, which can be checked directly.

1) We learn from Proposition \ref{s5prop5.5} that the subdifferential of $E(U_0)$ has the form $(\xi, \zeta) = (-z', z(L))$, where $z(x) \in \Sgn(\frac d{dx} u_0(x))$ and 
$z(L)\in \Sgn(u_0(L) - v_0)= \Sgn \, 0$.
Since $u_0$ is a.e. differentiable on $(0,b_0)$, then the continuous $z$ must be equal to
$\Sgn (\frac d{dx} u_0(x))$ for a.e. $x\in (0,b_0)$. Hence,  $z(x)=\chi$ for $ x\in [0,b_0]$. Moreover,
by Proposition \ref{s5-pr-cannon},
the optimal $z$ has to be linear on facets. Hence, $z$ takes the following form,
$$
z(x)=
 \frac{\mu -\chi}{L-b_0} (x-b_0) + \chi,\qquad x\in (b_0,L),
$$
where $z(L) =\mu\in[-1,1]$ has to be determined.

The variational problem set in (\ref{c1}) leads to the following simple question of  minimization
$$
\int_{b_0}^L \left(\frac{\mu -\chi}{L-b_0}\right)^2 \,dx + \mu^2
$$
with respect to $\mu \in [-1,1]$. The minimum is attained if and only if
\begin{equation}\label{5r-eta}
 \frac{\mu -\chi}{L-b_0} + \mu =0,
\end{equation}
i.e. 
\begin{equation}\label{5r1}
 \mu = \frac{\chi}{1+L-b_0}.
\end{equation}
We notice that due to (\ref{5r-eta}), after we identify $v_t(L)$ with $\left.\frac{d^+}{dt}v(0)\right|_{x = L}$, then we have,
\begin{equation}\label{5r-star}
v_t(L)= - [z\cdot\nu](L) = - \mu = \di z.
\end{equation}
In other words, facet $([b_0,L],\chi)$ and the boundary value move with the same velocity. 
According to the theory developed in Section \ref{42CC} and formula (\ref{5r-star}), we conclude that facet $([b_0,L],\chi)$ is calibrable and $(1,\Gamma)$-coherent.

Moreover, formula (\ref{5r-star}) shows that facet $([b(t),L],\chi)$ will expand, because if we set $h_r(t) = u(\cdot, t)|_{[b(t),L]}$, then $h_r(t)$ must satisfy the equation,
$$
u_0(b(t)) = h_r(t),\quad t\ge 0
$$
or
$$
b(t) = u_0^{-1}(h_r(t))
$$
which is well-defined for the expanding facet due to monotonicity of $u_0$. We expect that (\ref{5r-star}) will continue to hold for later times, which combined with the above formula for $b$ yields the following ODE for $h_r(t)$,
\begin{equation}\label{5r-hr}
\frac d{dt} h_r = \frac{-\chi}{1+L - u_0^{-1}(h_r)},\qquad h_r(0) = u_0(L).
\end{equation}
The right-hand-side of this equation need not be Lipschitz continuous, but it is a decreasing function, thus there is a unique solution to (\ref{5r-hr}).

2) Now, we turn our attention to  $x=0$. We note that at $t=0$, we have
$$
v_t (0)= - [z\cdot\nu]  = \Sgn\, u'_0(0)= \chi.
$$
This condition means that a facet forms instantaneously and it moves with initial velocity $\chi$. 
It is easy to derive a formula for  the velocity of this facet. Namely, the argument which leads us to (\ref{staff})  yields (\ref{staf2}) too. Moreover, if we set $h_l(t) = u(\cdot,t)|_{[0,a(t)]}$, then the expanding facet $([0,a(t)],\chi)$ must satisfy the equation $u_0(a(t)) = h_l(t)$. Thus, we come to the conclusion that $h_l$ must satisfy the following ODE 
\begin{equation}\label{5r-hl}
\frac d{dt} h_l = \frac{-\chi}{1+ u_0^{-1}(h_l)},\qquad h_l(0) = u_0(0).
\end{equation}
Moreover, facet $([0,a(t)],\chi)$ is calibrable and $(1,\Gamma)$-coherent, (for $t>0$). We may use the same argument, as in part 1), to establish this result. 

3) In the above considerations, facets attached to $\Gamma$ always move with the same velocity as the boundary layer. Moreover, we see that on $[a(t), b(t)]$, we have $u_t=0$. Hence, we can summarize our computations in the following formula for $u$,
\begin{equation}\label{5r-sum}
 u(x,t) =\left\{ 
 \begin{array}{ll}
  h_l(t) & x \in [0,a(t)],\\
  u_0(x)& x \in (a(t), b(t)),\\
  h_r(t)& x \in [b(t),L],
 \end{array}
\right.
\end{equation}
where $t<T_{cr}$ and $a(T_{cr}) = b(T_{cr})$.

We can check by inspection that $U(\cdot, t) = (u(\cdot,t), \gamma u(\cdot,t))$ belongs to $D(\partial E)$ and it satisfies (\ref{dd}). Our claim follows. \qed


\section{A boundary layer behavior 
in the radial case in two dimensions}\label{stud}

We would like to study properties of solution while taking advantage of the radial symmetry. We expect that the examples, we are going to present, look the same in all dimensions bigger than one. However, for the sake of definiteness, we restrict our attention to the planar case.

If $\Omega$ is a ball centered at the origin and $\Gamma = \d\Omega$, then we will see that
Corollary \ref{cor6.1} implies that 
any radially symmetric facet $(F,\chi)$ touching $\Gamma$ will be calibrable and $(1,\Gamma)$-coherent. Nonetheless, we find it instructive to present the construction of minimizers of $I_\tau$, (here $\tau =1$), based on Proposition \ref{Pcon} and Lemma \ref{PCal}.

We also consider  the case when $\Omega$ is an annulus and $\Gamma$ is the boundary of the inner ball, then Theorem \ref{CO} in general is not applicable. Thus, we may expect to see the boundary layer detachment. In other words, we will see calibrable facets, which are not coherent. We will present detailed calculations in Subsection \ref{ssann}.
Our argument depends on the form of the canonical selection of the subdifferential presented in 
Proposition \ref{s5.2-pro5.3}.

We state our results for $\tau=1$, which has the obvious advantage of notation simplicity. However, Corollary \ref{cor5.2} and Lemma \ref{le6.1} tell us that once we have a result for  $\tau=1$, then we have the same result for any $\tau>0$ on a scaled domain. We leave the details for the interested reader.

\subsection{A ball}\label{ssball}
We want to take advantage of a possible simplification of the argument, when we consider a ball $B(0,R)$. 
We assume the radial symmetry of initial datum $u_0(x)= u_0(|x|)$. We restrict our attention to Lipschitz continuous $u_0$ and monotone $r\mapsto u_0(r)$ data. We use $\chi$ as defined in (\ref{def-chi}).

We assume that $\Gamma=\partial B(0,R)$ and we set $F:=\bar B(0,R)\setminus B(0,\rho)$ with $ \rho \in (0, R) $. We will consider facet $(F,\chi)$, where both $F$ and $\chi$ are defined above, which
is attached to  the boundary.  

It is clear that a facet at the center of the ball must appear, see \cite{MR2746654}, however, in order to simplify the discussion, we assume that a facet
$B(0,a_0)$, $a_0>0$ is already present,  i.e. $U_0 \in D(\d E)$.
We will denote its  evolving radius by 
$a\equiv a(t)$. We are not interested in any other interior facets. 

We want to discover only short time dynamics before any possible facet collision occurs at $t=t_{cr}$.
We argue, as in the proof of Theorem \ref{5thm1}, that for this purpose, it is sufficient to study $\mathcal{A}^o(U_0)$, the canonical section of $\d E(U_0)$, because $U\in C([0,+\infty);H)$, $U(t) \in D(\d E)$ for all $t\ge0$ and 
\begin{equation}\label{r81-1}
 \frac{d^+ U}{dt} + \mathcal{A}^o(U) =0\qquad \hbox{for all }t \ge 0.
\end{equation}
The explicit form of the minimal section of $\partial E$ at $U_0$ is such that the formulas for the position of the facet
depend continuously upon parameters, 
thus we can directly check that (\ref{r81-1}) holds
until a facet collision occurs.

Now, we begin our analysis of the subdifferential. In fact, it is sufficient to consider the localized functional $I_\tau$, in this case $\tau =1$.

In principle, we have the following cases singled out in Proposition \ref{s5-pr-cannon}. If $z$ is a minimizer of (\ref{c1}), i.e. $I_1$, then we have either:
\\[1ex]
(1) Facet $(F,\chi)$ is calibrable and $(1,\Gamma)$-coherent, i.e. there $z\in SCH(F,\Omega,\Gamma)$ minimizing $I_1$ and such that $\di z = \lambda$, $[z\cdot\nu] = \mu$ and $\mu = -\lambda$. This in turn implies that $|\lambda| = |\mu | \le 1$. This happens when $\gamma u = v$ on $\Gamma$;
\\[1ex]
or
\\[1ex]
(2) Facet $(F,\chi)$ is calibrable but not coherent, i.e. there is $z\in SCH(F,\Omega,\Gamma)$ minimizing $I_1$ and such that $\di z = \lambda$, $[z\cdot\nu] = \mu$ and $ |\mu | =1$, but $|\lambda| \ne 1$.
We shall see that this cannot happen when $\gamma u = v$ on $\Gamma$.

Our task is to construct $z\in SCH(F,\Omega,\Gamma)$ for a given geometric configuration. We will prove that $z$ minimizes $I_1$ by invoking Proposition \ref{Pcon} or Lemma \ref{PCal}.

We will first look for configurations corresponding to case (1), i.e. the $(1,\Gamma)$-calibrability, the $t$-dependence is suppressed.
Due to Proposition \ref{s5.2-pro5.3} the Cahn--Hoffman vector field $z$ has the form $z(x) = w(|x|) \bne_r$, where $\bne_r = x/|x|$, for $x\neq 0$. 

The monotonicity assumption on $u_0$ gives, $\nabla u_0 \neq 0$ for $|x|\in(a,\rho),$ hence 
$|z| = \left| \nabla u / |\nabla u| \right| =1$. As a result,
$z\cdot \bne_r = w(r) = \chi$ for $|x|\in(a,\rho).$ In addition, continuity of $z$ yields,
$$
(z\cdot \nu_F)(\rho) = -\chi \quad\hbox{and}\quad [z\cdot\nu](R) = \mu,
$$
where $\nu_F$ is the outer normal to the facet $(F,\chi)$ of the annulus $A(\rho,R) := B(0, R) \setminus \bar{B}(0, \rho) $. Indeed, the value of $z$ is well-defined on $A(a,\rho)$, so the first equality holds. On the other hand, since $\gamma u -v =0$ on $\Gamma$ and $-[z\cdot\nu](R) \in \Sgn(\gamma u -v) = [-1,1]$, we conclude that $[z\cdot\nu](R) = \mu$,
where $\mu$ has to be determined. Summing up these conditions yields,
\begin{equation}\label{br_w}
w(\rho) = \chi, \qquad w(R) =\mu. 
\end{equation}
We seek $w$ such that the divergence of $z$ is constant on the facet and $|w(r)|\le 1$, for all $r\in(\rho,R)$. Since  $\di z = w' + w/r$, then we want that $w$ satisfy
\begin{equation}\label{r8.2ha}
 \di z = \frac{(r w)'}{r} = \lambda,
\end{equation}
as well as boundary conditions (\ref{br_w}), because due to Proposition \ref{s5-pr-cannon}, we need a minimal section of the subdifferential. Since we are considering  case (1), 
\begin{equation}\label{r4}
 \di z = - [z\cdot\nu]\quad \hbox{for }r=R.
\end{equation}
Combining these conditions gives us 
$ \lambda = - \mu.$

After simple calculations, we reach, 
\begin{equation}\label{r6}
 \mu= \frac{2\rho\chi}{R^2-\rho^2 + 2R}.
\end{equation}
We can also see that $w$ has the form
$$
w(r) = \frac{\rho\chi (r^2 - R^2 -2R)}{(\rho^2 - R^2 - 2R)r},
$$
and $ |z(r)| = |w(r)| \le 1$ for $r\in [\rho,R]$. The calculations above show that $z\in SCH(F,\Omega, \Gamma)$, see (\ref{r8.2ha}) and (\ref{r4}). Moreover, we invoke Proposition \ref{Pcon} to deduce that $z$ is a minimizer of $I_\tau$, with $\tau =1$. Thus, we conclude that $(F,\chi)$ is calibrable and $(1,\Gamma)$-coherent.

The bulk at $r=\rho$ moves with velocity 
\begin{equation}\label{rnau}
 u_t = \di (\chi\bne_r) = \frac {\chi}\rho .
\end{equation}
At the same time the sign of facet velocity is $- \chi$, so we conclude that they are going in the opposite directions. As a result,  the facet expands.


We also have to follow the boundary behavior of the solution. Since
\begin{equation}\label{rnav}
 v_t = - [z\cdot\nu] = -\mu = \lambda,
\end{equation}
we see that the velocities of the facet and the boundary value are equal. This implies that $\gamma u = v$ on $\Gamma$ for $t\in[0,t_{cr}]$. 

The missing piece of information is $\rho(t)$, the inner radius of facet $( \bar B(0,R)\setminus B(0,\rho(t)), \chi)$. Since the solution $U=(u,v)$ cannot have jumps, we deduce that 
$$
v(t) = u(\rho(t),t).
$$
Taking into account the equations for $u(\rho,t)$, (\ref{rnau}), and $v(t)$, (\ref{rnav}), as well as (\ref{r6}), we deduce that
\begin{equation}\label{u=v}
v_0  - \int_0^t \frac{2\chi \rho(s)}{R^2- \rho^2(s) + 2R} \,ds =
u_0(\rho) + \frac{\chi t}{\rho}.
\end{equation}

\bigskip
Now, we consider the case $\rho=R$, i.e. a possibility of a facet formation at the boundary with the dynamic condition.
Because for a given $t\ge 0$ the selection $z$ is continuous, we notice that (while identifying $v_t$ with $\frac{d^+}{dt}v$)
$$
v_t 
= -[z\cdot \nu] =-\chi \bne_r \cdot \nu = - \chi\qquad\hbox{for } t=0,
$$
where $\nu$ is the outer normal to $\d\Omega$.
If we compare $v_t$ above with $u_t$ from eq. (\ref{rnau}), then we see that
the boundary value and the bulk move in the opposite directions. We notice that $v_t =  -[z \cdot \nu] \in \Sgn(\gamma u -v) $ required by (b2) of Theorem \ref{Prop.rep_subd} (B) is possible if and only if $\gamma u =v$, thus a facet must be formed to preserving the continuity of solutions at $\Gamma$.
At $t>0$, we are back in the situation we have already studied.

We might say that we
have a collision of the bulk and the boundary, which leads to the creation of a calibrable and $(1,\Gamma)$-coherent facet at $\rho =R$. 

We 
analyzed all possible
inner radii $\rho$ of the facet $F= \overline{A(\rho, R)}$, noticing that only case (1) occurred as long as $\gamma u =v$. Thus by the uniqueness of solution,  we conclude that case (2) never happens.

The analysis of the ball is complete. We may collect our observations in a single statement.
\begin{thm}\label{Thm8.1}
 Let us assume that $\Omega$ is the ball $B(0,R)$ and the initial condition $u_0$ is radially symmetric, i.e. $u_0(x) = u_0(|x|)$, the function $r\mapsto u_0(r)$ is monotone, belongs to
 $C^2([0,R])$ and $u'_0(R)\neq 0$. 
 Moreover, $\Gamma = \d\Omega$ and $v_0$ is radially symmetric, i.e., $v_0$ is a constant and $v_0 = \gamma u_0$ and $\chi$ is defined in (\ref{def-chi}). Then,\\
(1) If a facet $(F,\chi)$, where $F=\overline{A(\rho,R)}$,  is present, then it is calibrable and 
$(1,\Gamma)$-coherent. Moreover, it evolves with a velocity given by
\begin{equation}\label{r7}
u_t = \di z \equiv \lambda := \frac{2\rho}{R^2-\rho^2 + 2R} \chi, 
\end{equation}
where $\chi=1$ for $u_0$ decreasing and $\chi=-1$ for $u_0$ increasing. Moreover, $|u_t|<1$, as long as $\rho<R$ and $v(t) = \gamma u(t)$ for all $t\ge 0$.
    \\[2ex]
(2) If $\rho=R$,  then a facet touching the boundary is created, which  moves according to (1) for $t>0$.
(3) The inner radius of the facet, $\rho(t)$, satisfies the following ODE,
\begin{equation}\label{rhoprime}
 \frac{d\rho}{dt}\left( u_0'(\rho) - \frac{\chi t}{\rho^2}\right) = -\frac \chi \rho - \frac{2\rho\chi}{R^2- \rho^2 + 2R},
\qquad \rho(0) = \rho_0.
\end{equation}
\end{thm}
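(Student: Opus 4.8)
The plan is to reduce the problem to a one--dimensional radial computation, to construct the canonical section of $\partial E(U_0)$ explicitly, and then to certify its minimality through the sufficient condition of Proposition \ref{Pcon}. By Proposition \ref{s5.2-pro5.3} I may seek the Cahn--Hoffman field in the form $z(x) = w(r)\bne_r$ with $r = |x|$. On the bulk annulus $A(a,\rho)$, where $u_0$ is strictly monotone and hence $\nabla u_0 \neq 0$, the requirement $z \in \Sgn^N(\nabla u)$ forces $|z| = 1$, so $w = \chi$ there and in particular $w(\rho) = \chi$. On the facet $A(\rho,R)$ the canonical section demands $\di z$ constant (Proposition \ref{s5-pr-cannon}), which in polar coordinates is $(rw)'/r = \lambda$, see (\ref{r8.2ha}), to be solved together with the matching value $w(\rho) = \chi$ and a free endpoint value $w(R) = \mu$.

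For part (1), since $\gamma u = v$ on $\Gamma$, I would look for the coherent regime and impose the relation $\lambda = -\mu$ coming from $\tau\lambda + \mu = 0$ with $\tau = 1$. Integrating $(rw)' = \lambda r$ and solving the resulting $2\times 2$ linear system for $(\lambda,\mu)$ yields $\mu = 2\rho\chi/(R^2 - \rho^2 + 2R)$, as in (\ref{r6}), and the explicit profile $w(r) = \rho\chi(r^2 - R^2 - 2R)/\big((\rho^2 - R^2 - 2R)r\big)$. The only step that is not automatic is verifying $|w(r)| \le 1$ for every $r \in [\rho,R]$, which is what makes $z$ an admissible element of $CH(F,\Omega,\Gamma)$; this reduces to an elementary monotonicity estimate on the rational function $w$. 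Once this is secured, $z$ lies in $SCH(F,\Omega,\Gamma)$ and satisfies $\tau\lambda + \mu = 0$, so Proposition \ref{Pcon} guarantees that $z$ minimizes $I_1$, whence $(F,\chi)$ is calibrable and $(1,\Gamma)$-coherent. The velocity is then $u_t = \di z = \lambda = -\mu$, giving (\ref{r7}), and $|u_t| < 1$ for $\rho < R$ follows by inspection of the closed form of $\mu$. I note that coherency by itself is also immediate from Corollary \ref{cor6.1}, since $\kappa = 1/R > -1$ on $\Gamma$; I would nevertheless keep the explicit construction because it simultaneously produces the velocity and the endpoint value $\mu$.

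For part (2), with $\rho = R$, I would evaluate the continuous radial field at the boundary to get $v_t = -[z\cdot\nu] = -\chi$ and compare it with the bulk velocity $u_t = \chi/\rho$ from (\ref{rnau}); these have opposite signs. Since condition (b2) requires $-[z\cdot\nu] \in \Sgn(\gamma u - v)$, this opposite motion cannot be sustained while keeping $\gamma u = v$ without the instantaneous nucleation of a flat region at $\Gamma$; hence a calibrable facet forms at once, and part (1) governs the evolution for $t > 0$.

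For part (3), I would start from the continuity of the solution across the inner facet edge, $v(t) = u(\rho(t),t)$. In the bulk every fixed radius evolves by $u_t = \chi/r$, so $u(\rho(t),t) = u_0(\rho(t)) + \chi t/\rho(t)$, while $v(t) = v_0 - \int_0^t \mu(s)\,ds$; equating the two gives exactly (\ref{u=v}). Differentiating in $t$, using $\dot v = -\mu$ on the left and the chain rule on the right, and rearranging, produces the stated ODE (\ref{rhoprime}). The genuine obstacle across all three parts is not any isolated calculation but the global consistency argument: one must check that the constructed $z$ stays a valid field of norm $\le 1$ for every admissible inner radius, so that only case (1) ever arises. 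The uniqueness in Theorem \ref{Thm4.4} then excludes the non-coherent case (2), and the continuous dependence of the construction on its parameters lets (\ref{r81-1}) propagate it in time until the facets collide.
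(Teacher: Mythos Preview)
Your proposal is correct and follows essentially the same route as the paper: the radial reduction via Proposition \ref{s5.2-pro5.3}, the ODE $(rw)'/r=\lambda$ on the facet with the coherency constraint $\lambda+\mu=0$, the explicit formulas for $\mu$ and $w$, the admissibility check $|w|\le 1$, and the appeal to Proposition \ref{Pcon} are exactly what the paper does in the discussion preceding the theorem; the treatment of parts (2) and (3) likewise mirrors the paper's argument based on (\ref{rnau})--(\ref{rnav}) and differentiation of (\ref{u=v}). Your additional remark that Corollary \ref{cor6.1} already guarantees coherency is also noted by the paper at the opening of the section, though, like you, it keeps the explicit construction to obtain the velocity formula.
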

\begin{proof}
Actually, we constructed explicitly $u(r,t)$, $v(t)\equiv u(R,t)$ and $z\in SCH(F, \Omega, \Gamma)$, as a section of the subdifferential $\partial E$. The computations we presented above indeed show that,
 $$
 \frac{d^+ U}{dt} = \mathcal{A}^o (U), \quad U(0) = (u_0, v_0).
 $$
 The position of the facet follows from the continuity of solutions. The differentiation of (\ref{u=v}) with respect to time yields part (3). Here it is important that $u'_0(R)\neq 0$, otherwise the derivative of $\rho$ at $t=0$ may be infinite. We note that formulas (\ref{u=v}) and (\ref{rhoprime}) are meaningful also in the case $\rho =R$.
\end{proof}

\subsection{An annulus case}\label{ssann}
In order to set the notation, we recall that we write $A(r_{in},R_{out})=B(0,R_{out})\setminus\bar B(0,r_{in})$ for an open annulus with inner radius $ r_{in}$ and outer radius $R_{out}$. We set 
$\Omega = A(r_0,R)$.
We notice that the case of a facet touching $\d B(0,R)$ is not different from the case of ball and it has been solved in the previous subsection. As previously, for the sake of definiteness, we assume that $u_0$ Lipschitz continuous, radially symmetric and monotone as a function of $|x|$.

\subsubsection{Facet evolution}

We 
assume that $\Gamma = \d B(0,r_0)$. Moreover, we also assume the existence of  the inner facet is $(F,\chi)$, where $F= \bar B(0,\rho)\setminus B(0,r_0)$, $\rho > r_0$ and $\chi$, defined in (\ref{def-chi}), indicates monotonicity of
$u_0$ in case the initial condition depends only on the distance from the origin. With the help of Proposition \ref{s5.2-pro5.3}, we deduce that the Cahn--Hoffman vector has the form of $z(x)= w(|x|)\bne_r$. However, in order to proceed, we will assume that initially $v_0$ on $\Gamma$ equals to the trace of $u_0$ on $\Gamma$. Contrary to the case of the ball, determining the  behavior of $\rho(t)$ is more difficult and we will not do this since this is not the main point here.

Now, we begin our analysis of the subdifferential. We know that this is reduced to the study of minimizers of $I_\tau$, here $\tau =1$.  In principle, as for the ball in Subsection \ref{ssball}, we have the following cases singled out in Proposition \ref{s5-pr-cannon}:\\
(1) Facet $(F,\chi)$ is calibrable and coherent, provided that $\gamma u = v$ on $\Gamma$. In other words,
there is  $z\in SCH(F,\Omega,\Gamma)$ minimizing $I_\tau$, such that $\di z = \lambda$ on $F$, $[z\cdot\nu] = \mu$ on $\Gamma$ $\mu = -\lambda$ and  $|\lambda| = |\mu | \le 1$.\\
(2) Facet $(F,\chi)$ is calibrable but not coherent, but then  $\gamma u \neq v$ on $\Gamma$.  That is, 
$z$, any minimizer of $I_\tau$ in $SCH(F,\Omega,\Gamma)$  is such that $\di z = \lambda$ on $F$, $[z\cdot\nu] = \mu$ on $\Gamma$, but $|\lambda| \ne 1$ 
and $ |\mu | =1$. 

We will first look for configurations corresponding to case (1).
Arguing as in the previous subsection, we see that $z \in CH(F, \Omega, \Gamma)$, of the form $z = w \bne_r$ satisfies the following boundary conditions,
\begin{equation}\label{a1}
\mu = [z\cdot\nu](r_0) = - \bne_r \cdot w(r_0)\bne_r,\qquad
    (z\cdot\nu_F)(\rho) = \bne_r \cdot w(\rho)\bne_r = \chi.
\end{equation}
Moreover, we have
$$
v_t= - [z\cdot\nu](r_0) = -\mu, \qquad \di z = \lambda,   \qquad \lambda=-\mu.
$$
This yields the following boundary value problem for an ODE,
\begin{equation}\label{a2}
 \frac{(r w)'}{r}=\lambda, \quad w(r_0) = \lambda ,\quad w(\rho)=\chi.
\end{equation}
Solving this ODE yields
$$
w(r) = \frac{\lambda r}{2} + \frac cr, 
$$
where 
\begin{equation}\label{a3}
 \lambda = 2 \frac{\rho\chi}{\rho^2 - r_0^2 + 2 r_0},\qquad c = \frac \lambda 2 r_0(2- r_0).
\end{equation}
Finally,
\begin{equation}\label{a3.5}
 w(r) = \frac{\rho\chi(r^2 - r_0^2 + 2 r_0)}{r(\rho^2 - r_0^2 + 2 r_0)},\qquad 
c = \frac {\rho \chi r_0 (2-r_0)}{\rho^2 - r_0^2 + 2 r_0}.
\end{equation}

    \begin{remark}\label{Rem.KSh01}
        In case (1), we can derive the following ODE for $ \rho = \rho(t) $:
        \begin{equation*}
            \frac{d \rho}{dt} \left( u_0'(\rho) -\frac{\chi t}{\rho^2} \right) = -\frac{\chi}{\rho} -\frac{2 \rho \chi}{\rho^2 -r_0^2 +2 r_0},
        \end{equation*}
        just as in Theorem \ref{Thm8.1} (3). Hence, we can say that  $ (u_t, v_t) = (\lambda, -\mu) = (\lambda, \lambda) \in C(F \times [0, t]) \times C(\Gamma_F \times [0, t]) $, for any time $ t $ before the facet collision.
    \end{remark}

We have to check when $z\in SCH(F,\Omega,\Gamma)$, i.e.,
$$
|\lambda |\le 1\qquad\hbox{and} \qquad|w(r)| \le 1\hbox{ for all }r\in (r_0, \rho).
$$
In particular, we identify the case when the facet and the boundary layer move with the same velocity.
\begin{prop}\label{pr-a}
Let us suppose that $\Omega = A(r_0,R)$, $\Gamma = \d B(0,r_0)$, $u_0(x) = \tilde u_0(r)$, $\rho>r_0$, $u_0$ is of $C^2$-class  
    and $\tilde u_0$ is monotone. We also assume that $(F,\chi)$ is a facet, where $F = \overline{A(r_0,\rho)}$ and $\chi$ is given by (\ref{def-chi}). Moreover, 
$v_0 = \tilde u_0(r_0)$ and $ z(x,t) = w(|x|,t)\frac x{|x|}$, where $w$ is given by (\ref{a3.5}), $\lambda$ is defined by (\ref{a3}).

If $r_0>2$,
then $|\lambda|\le 1$
and $|w(r)| \le 1$ for all $r\in (r_0, \rho)$,  as a result  $z \in SCH(F,\Omega, \Gamma)$. Moreover, $z$ is the minimal selection $\mathcal{A}^o(U)$, i.e. facet $(F,\chi)$ is calibrable and $(1,\Gamma)$-coherent.
\end{prop}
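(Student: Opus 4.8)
The plan is to verify that the explicit radial field $z(x) = w(|x|)\,\bne_r$, with $w$ from \eqref{a3.5} and $\lambda$ from \eqref{a3}, belongs to $SCH(F,\Omega,\Gamma)$, and then to certify it as a minimizer of $I_1$ by Proposition \ref{Pcon}, since everything has already been arranged so that $\mu = -\lambda$.

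First I would note that the denominator $D := \rho^2 - r_0^2 + 2r_0$ is strictly positive, because $\rho > r_0 > 0$. The bound $|\lambda| \le 1$ then amounts, after clearing $D>0$, to $\rho^2 - 2\rho - (r_0^2 - 2r_0) \ge 0$, and this factors as
\[
	\rho^2 - 2\rho - r_0^2 + 2r_0 = (\rho - r_0)(\rho + r_0 - 2) \ge 0,
\]
where both factors are positive since $\rho > r_0 > 2$. For the pointwise bound I would compute directly
\[
	r D - \rho\,(r^2 - r_0^2 + 2r_0) = (\rho - r)\bigl(\rho r + r_0(r_0 - 2)\bigr),
\]
so that for $r \in (r_0,\rho)$ the factor $\rho - r$ is positive and $\rho r + r_0(r_0 - 2)$ is positive exactly because $r_0 > 2$ renders both summands nonnegative. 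Hence $|w(r)| = \rho(r^2 - r_0^2 + 2r_0)/(rD) \le 1$ on $(r_0,\rho)$, with equality only at $r = \rho$, where $w(\rho) = \chi$. This is the step where the hypothesis $r_0 > 2$ is used, and it is tailored precisely to make both key quantities manifestly nonnegative.

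Next I would confirm membership in $SCH(F,\Omega,\Gamma)$. Because $r_0 > 2 > 0$, the field $z = w(|x|)\,\bne_r$ is smooth on the closed annulus $F = \overline{A(r_0,\rho)}$ and, by the previous estimate, bounded by one, so $z \in X_2(F)$ with $\|z\|_\infty \le 1$. The interface $\partial F \cap \Omega$ is the circle $\{|x| = \rho\}$ with outer normal $\nu_F = \bne_r$, and $[z\cdot\nu_F](\rho) = w(\rho) = \chi = \gamma\chi$; furthermore $(F \cap \partial\Omega)\setminus\Gamma$ is empty, since $F$ meets $\partial\Omega$ only along the inner circle $\Gamma$, so the last condition in \eqref{CHvec} is vacuous and $z \in CH(F,\Omega,\Gamma)$. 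By construction $\di z = \lambda$ is constant on $F$; and on $\Gamma$ the outer normal of $\Omega$ points toward the origin, $\nu = -\bne_r$, whence $[z\cdot\nu] = -w(r_0) = -\lambda$ is constant. The careful bookkeeping of this inward orientation on the concave inner boundary is the only genuinely delicate point, and it delivers the identity $\mu = -\lambda$; thus $z \in SCH(F,\Omega,\Gamma)$.

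Finally, with $\tau = 1$ and $\mu = -\lambda$ the coherency relation $\tau\lambda + \mu = \lambda - \lambda = 0$ holds, so Proposition \ref{Pcon} shows that $z$ minimizes $I_1$. Definition \ref{Cal} then makes $(F,\chi)$ calibrable, while $\tau\lambda + \mu = 0$ makes it $(1,\Gamma)$-coherent; and since the values $\di z$ and $[z\cdot\nu]$ of any minimizer are uniquely determined, $(-\di z, [z\cdot\nu])$ is the canonical section $\mathcal{A}^o(U)$. The remainder of the argument is elementary, so no serious obstacle arises beyond the sign and orientation bookkeeping on $\Gamma$ already flagged.
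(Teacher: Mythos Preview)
Your proof is correct and follows essentially the same approach as the paper: verify $|\lambda|\le 1$ and $|w|\le 1$ explicitly, check $z\in SCH(F,\Omega,\Gamma)$, then apply Proposition~\ref{Pcon} to conclude minimality and hence calibrability and $(1,\Gamma)$-coherency. The only minor difference is that the paper states the sharper equivalence $|w(r)|\le 1 \Leftrightarrow \rho+r_0\ge 2$ (which it reuses in the subsequent propositions for $r_0\le 2$), whereas your factorization $(\rho-r)\bigl(\rho r + r_0(r_0-2)\bigr)$ uses $r_0>2$ directly to make both summands nonnegative; for the present statement this is entirely sufficient.
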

\begin{proof}
The formulas (\ref{a3}) and (\ref{a3.5}) for $\lambda$ and $w$ were derived on the premise that $\di z = - [z\cdot\nu]$ and  $\di z$ is a constant.

It is easy to see that  $|z(x)|=|w(|x|)| \le 1$ if and only if $\rho + r_0 \ge 2$.
Moreover, if $\lambda$ is given by (\ref{a3}), then 
$ |\lambda| \leq 1 $ is equivalent to $ 0 \leq (\rho -r_0)(\rho +r_0 -2) $, and the equality holds if and only if $ \rho +r_0  = 2 $.    
    The last inequality is true if and only if  $\rho+ r_0>2$. Thus, $z(x,t) = w(|x|,t)\bne_r$ is in $SCH(F,\Omega,\Gamma)$, where $F =\bar B(0,\rho)\setminus B(0,r_0)$. We invoke Proposition \ref{Pcon} to deduce that  $z$ minimizes $I_\tau$ with $\tau=1$. Hence, the facet $(F,\chi)$ is calibrable and coherent. Moreover, we see that $v_t = \gamma u_t$ on $\Gamma$.

\end{proof}
This proposition states that the facet and the boundary layer move at the same velocity.

\bigskip
The borderline case $r_0=2$ behaves like the previous one with some changes. We see from (\ref{a3}) that
$\lambda = 2\frac \chi \rho$, thus $|\lambda| < 1$, because $\rho>r_0=2$. Moreover, formula (\ref{a3}) implies that $c=0$, hence, 
$$
w(r) = \frac {\chi r}\rho,
$$
so $|w(r)| \le 1$. We collect these observations below.
\begin{prop}
Let us suppose that the hypotheses of Proposition \ref{pr-a} hold, but
$r_0=2$. 
Then, $\lambda = 2\chi/\rho$, facet $(F,\chi)$, where $F$ is defined in the proposition above and  is calibrable and coherent.  Moreover, $\chi = v_t = \gamma u_t$ on $\Gamma$.
\end{prop}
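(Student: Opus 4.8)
The plan is to treat this as the degenerate specialization of Proposition~\ref{pr-a} in which the constant $c$ appearing in \eqref{a3.5} vanishes. The whole case~(1) construction---the radial ansatz $z(x)=w(|x|)\bne_r$, the boundary value problem \eqref{a2}, and the formulas \eqref{a3}--\eqref{a3.5}---carries over unchanged; setting $r_0=2$ merely collapses the profile $w$ to a linear function of $r$, so the verification becomes strictly easier than for $r_0>2$.

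First I would substitute $r_0=2$ into \eqref{a3} and \eqref{a3.5}. The denominator $\rho^2-r_0^2+2r_0$ reduces to $\rho^2$, and the factor $r_0(2-r_0)$ in $c$ vanishes, giving at once $\lambda=2\chi/\rho$, $c=0$, and hence $w(r)=\chi r/\rho$, so that $z(x)=\chi\,\frac{|x|}{\rho}\,\bne_r$. I would then confirm that $z\in SCH(F,\Omega,\Gamma)$: the divergence $\di z=(rw)'/r=2\chi/\rho=\lambda$ is constant on $F$, the trace on $\Gamma=\partial B(0,r_0)$ is $[z\cdot\nu](r_0)=-w(r_0)=-2\chi/\rho=:\mu$ (constant, using $\nu=-\bne_r$ there), and the outer trace is $[z\cdot\nu_F](\rho)=w(\rho)=\chi$, so the Cahn--Hoffman conditions \eqref{CHvec} hold. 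The pointwise bound is immediate: $|w(r)|=r/\rho\le 1$ for $r\in(2,\rho)$, and $|\lambda|=2/\rho<1$ because $\rho>r_0=2$.

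Finally I would apply Proposition~\ref{Pcon}. Since $\lambda=2\chi/\rho$ and $\mu=-2\chi/\rho$ satisfy $\tau\lambda+\mu=\lambda+\mu=0$ for $\tau=1$, the field $z$ minimizes $I_1$; by Definition~\ref{Cal} the facet $(F,\chi)$ is then calibrable, and by the definition of coherency it is $(1,\Gamma)$-coherent. The velocity statement follows by reading off the canonical section: $u_t=\di z=\lambda$ on $F$ and $v_t=-[z\cdot\nu]=-\mu=\lambda$ on $\Gamma$, so $\gamma u_t=v_t=\lambda$ there, with common sign $\chi$. I do not anticipate a genuine obstacle: the only point deserving a word of care is that the borderline exclusion $\rho+r_0=2$ from Proposition~\ref{pr-a} cannot occur here, since $\rho>r_0=2$ already forces the strict inequalities $|\lambda|<1$ and $|w|<1$ in the interior, so $z$ is a bona fide element of $SCH(F,\Omega,\Gamma)$ and the hypotheses of Proposition~\ref{Pcon} are met.
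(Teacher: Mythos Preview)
Your argument is correct and follows essentially the same route as the paper: substitute $r_0=2$ in \eqref{a3}--\eqref{a3.5} to obtain $\lambda=2\chi/\rho$, $c=0$, $w(r)=\chi r/\rho$, verify $|w|\le 1$ and $|\lambda|<1$ (since $\rho>r_0=2$), and then invoke Proposition~\ref{Pcon} to conclude calibrability and $(1,\Gamma)$-coherency. Your observation that the common velocity is $\lambda=2\chi/\rho$ (with sign~$\chi$) rather than literally $\chi$ is in fact sharper than the statement as printed.
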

\begin{proof} Clearly, $z(x,t)= w(|x|,t)\frac x{|x|}$, where $w$ is given above, belongs to $SCH(F,\Omega,\Gamma)$. Moreover, $|\lambda| = 2/\rho<1$, because $\rho>r_0=2$. Hence, Proposition \ref{Pcon} implies that $z$ minimizes $I_\tau$, with $\tau =1$. As a result, we obtain the canonical section of $\d E$. Thus, facet $(F,\chi)$, where $F = \bar B(0,\rho) \setminus B(0,r_0)$, is calibrable and coherent. We also see that the velocities $v_t$ and $u_t$ on the facet are equal. 
\end{proof}


More computations are required when $r_0<2$, they are presented in the course of proof of the proposition below.
\begin{prop}\label{Prop.final}
Let us suppose that the hypotheses of Proposition \ref{pr-a} hold, but
$r_0<2$.  Then, $\lambda$ is given by (\ref{a5}) below and:\\
1) $|\lambda|>1$ is equivalent to $\rho + r_0<2$. Then, 
facet $(F,\chi)$ is calibrable, but not $(1,\Gamma)$-coherent and
$|u_t|> 
|v_t|$. Since  $v_t$ and $u_t$ have the same sign, as a result for $t>0$ the boundary layer detaches.\\
2) $|\lambda|=1$ is equivalent to $\rho + r_0=2$, and  facet $(F,\chi)$ is calibrable and $(1,\Gamma)$-coherent
i.e. the boundary layer moves at the velocity of the bulk.\\
3) $|\lambda|<1$ is equivalent to $\rho + r_0>2$. Then, facet $(F,\chi)$ is calibrable and $(1,\Gamma)$-coherent.
\end{prop}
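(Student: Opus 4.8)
The plan is to follow the template already used for the two coherent regimes (Proposition \ref{pr-a} and the $r_0=2$ case) and to isolate the genuinely new behavior in the detachment regime. By Proposition \ref{s5.2-pro5.3} I look for the canonical section in the radial form $z(x)=w(|x|)\bne_r$, so that $\di z=(rw)'/r$ and the two scalar unknowns are the constant value $\lambda=\di z$ on $F$ and $\mu=[z\cdot\nu]$ on $\Gamma$. Imposing the facet condition $w(\rho)=\chi$ together with the coherence relation $\mu=-\lambda$ (equivalently $w(r_0)=\lambda$) gives the boundary value problem \eqref{a2}, whose solution is \eqref{a3}--\eqref{a3.5}; in particular the candidate divergence is
\begin{equation}\label{a5}
\lambda=\frac{2\rho\chi}{\rho^2-r_0^2+2r_0}.
\end{equation}
The first step is the elementary identity $|\lambda|\lessgtr 1\iff(\rho-r_0)(\rho+r_0-2)\gtrless 0$, obtained by completing the square in the denominator $\rho^2-r_0^2+2r_0$; since $\rho>r_0$, this collapses to the stated trichotomy $\rho+r_0\lessgtr 2$.

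For parts 2) and 3) (so $\rho+r_0\ge 2$ and $|\lambda|\le 1$) I would argue exactly as in Proposition \ref{pr-a}. The only admissibility point to check is $\|z\|_\infty\le 1$, i.e. $|w(r)|\le 1$ on $[r_0,\rho]$, which is again equivalent to $\rho+r_0\ge 2$. Then $z\in SCH(F,\Omega,\Gamma)$, the relation $\tau\lambda+\mu=\lambda-\lambda=0$ holds, so Proposition \ref{Pcon} shows that $z$ minimizes $I_1$; by Definition \ref{Cal} the facet is calibrable and $(1,\Gamma)$-coherent, with $u_t=\di z=\lambda=-\mu=v_t$, which is the borderline $|\lambda|=1$ in 2) and the strict $|\lambda|<1$ in 3).

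The substantive part is 1) ($\rho+r_0<2$, $|\lambda|>1$). Here the coherent ansatz is infeasible, because it would force $|w(r_0)|=|\lambda|>1$, violating $\|z\|_\infty\le 1$; this is the key obstruction. I would therefore saturate the boundary constraint, taking $|\mu|=1$ with $\mu=-\chi$ (the feasible value closest to the coherent $-\lambda$), and re-solve $(rw)'/r=\mathrm{const}$ with $w(\rho)=\chi$ and $w(r_0)=-\mu=\chi$. This yields $\di z=2\chi/(\rho+r_0)$ and the profile
\[
w(r)=\frac{\chi}{\rho+r_0}\Bigl(r+\frac{\rho r_0}{r}\Bigr);
\]
since $r+\rho r_0/r$ attains its maximum on $[r_0,\rho]$ at the endpoints (value $\rho+r_0$) and its minimum at $\sqrt{\rho r_0}$ by the arithmetic--geometric mean inequality, one gets $|w|\le 1$, so $z\in SCH(F,\Omega,\Gamma)$. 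Calibrability then follows from Lemma \ref{PCal}: with $\lambda=2\chi/(\rho+r_0)$ and $\mu=-\chi$ one computes $(\lambda+\mu)\Sgn\mu=-\bigl(2-(\rho+r_0)\bigr)/(\rho+r_0)<0$. The facet is not $(1,\Gamma)$-coherent because $\tau\lambda+\mu=\chi\bigl(2-(\rho+r_0)\bigr)/(\rho+r_0)\ne 0$. Finally $|u_t|=|\di z|=2/(\rho+r_0)>1=|\mu|=|v_t|$, while $u_t$ and $v_t=-\mu=\chi$ share the sign $\chi$; hence the bulk outruns the boundary value in the same direction and the boundary layer detaches.

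I expect the main difficulty to be conceptual rather than computational: recognizing that in the regime $\rho+r_0<2$ the coherence (Euler--Lagrange) relation cannot be realized inside the admissible ball $\|z\|_\infty\le 1$, and that the correct minimizer lives on the boundary of that constraint with $|\mu|=1$. Once the ansatz $\mu=-\chi$ is selected, confirming that it is a true minimizer reduces to checking the sign hypothesis of Lemma \ref{PCal}, and the detachment inequality $|u_t|>|v_t|$ is then immediate from $\rho+r_0<2$. The only place where a sign error could propagate is the orientation convention on $\Gamma=\partial B(0,r_0)$, where $\nu=-\bne_r$ and hence $\mu=-w(r_0)$; keeping this straight is what ties the three regimes together cleanly.
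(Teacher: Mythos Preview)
Your argument is correct and follows the same route as the paper: derive the coherent ansatz, read off the trichotomy from its feasibility, and in the infeasible regime $\rho+r_0<2$ saturate the boundary constraint with $\mu=-\chi$, re-solve the ODE, and invoke Lemma~\ref{PCal}. The only discrepancy is a labeling mismatch: what you tag as \eqref{a5} is the \emph{coherent} value (the paper's \eqref{a3}), whereas in the paper \eqref{a5} is the detachment-case divergence $\lambda=2\chi/(\rho+r_0)$, obtained there via Lemma~\ref{PCali} rather than by directly integrating the ODE; since both expressions satisfy $|\lambda|\lessgtr 1\iff\rho+r_0\lessgtr 2$, the trichotomy is unaffected and the remaining steps (your AM--GM check of $|w|\le 1$ versus the paper's equivalent factorization $(\rho-r)(r_0-r)\le 0$, and Proposition~\ref{Pcon} versus Lemma~\ref{PCal} in the borderline case~2) are interchangeable.
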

\begin{proof}
We noticed in the course of proof of Proposition \ref{pr-a} that formulas (\ref{a3}) for $\lambda$ yielded 
$|\lambda|\le1 $ if and only if $r_0+\rho \ge 2$. In other words, if $r_0+\rho < 2$, then (\ref{a3}) and (\ref{a3.5}) are no longer correct, because they violate the condition $|w(r_0)|\le 1$.
Thus, we consider equation (\ref{a2}) for 
$w$, but with the boundary conditions specified below,
\begin{equation}\label{a4}
 \frac{(r w)'}{r}=\lambda, \quad w(r_0) = \chi,\quad w(\rho)=\chi.
\end{equation}
Obviously,  we need to determine $\lambda$ to be able to solve (\ref{a4}).

In order to justify the condition $w(r_0) = \chi$, we recall that if we had $|w(r_0)| = | [z\cdot\nu]| <1$, then we would deduce from Proposition \ref{s5-pr-cannon} that $\di z = - [z\cdot\nu]$, but this occurs when $r_0+\rho\ge 2$.

For the purpose of determining $\lambda$ we use Lemma \ref{PCali}. 
This leads to the following condition on $\lambda$, assuming $ u_t$ is the velocity of facet $F$,
\begin{equation}\label{a4.5}
    \int_F u_t = \int_F \di z = \int_{\d F} [z\cdot \nu_F]\, d\cH^1 =
\chi \int_{\d B(0,\rho)}\,d\cH^1 - \chi \int_{\d B(0,r_0)}\,d\cH^1.
\end{equation}
Since we look for $z$ satisfying $\di z = \lambda$, then we obtain
$$
\lambda \pi (\rho^2 - r^2_0) = 2\pi \chi( \rho - r_0).
$$
As a result,
\begin{equation}\label{a5}
  \lambda = \frac{2\chi}{\rho + r_0}
\end{equation}
and its absolute value exceeds 1 if and only if $\rho + r_0<2$. 

We easily see that the solution to (\ref{a4}) is
\begin{equation}\label{a4.75}
 w(r) = \frac{\chi(r^2+ \rho r_0)}{r(\rho+r_0)};
\end{equation}
We must make sure that $|w(r)|\le 1$.
We notice that this condition is equivalent to 
$$
    (\rho - r)(r_0- r) \le 0
$$
which is always true for $r\in [r_0,\rho]$ with equalities at $r= r_0$  or $r=\rho$. 

Formula (\ref{a4}) also shows that $\mu = -\chi$. Thus, we may apply Lemma \ref{PCal} to deduce that $z\in SCH(F, \Omega, \Gamma)$, we constructed, minimizes $I_1$. Thus, facet $(F,\chi)$ is calibrable, but not coherent because $\lambda + \mu \ne 0$.

At the same time due to the boundary conditions (\ref{a4}), we notice that $v_t = \chi$. In other words the boundary layer is slower than the bulk and it detaches, due to the lack of coherency.

If $\rho + r_0=2$, then the Cahn--Hoffman vector field, we constructed above, minimizes $I_1$ due to Lemma \ref{PCal}. Moreover, $\lambda + \mu =0$, hence facet $(F,\chi)$ is calibrable and coherent. Finally,
the velocities of the bulk at $r=r_0$ and the boundary layer are equal, so the solution is continuous.

The condition $\rho + r_0 >2$ is equivalent to $|\lambda| =| u_t|<1$. 
In this case we proceed as in the proof of Proposition \ref{pr-a} and solve equation (\ref{a2}). Then $\lambda$ is given by (\ref{a3}) while (\ref{a3.5}) gives $w$. In the course of proof  of Proposition \ref{pr-a}, we learned that $|w(r)|\le 1$ for $r\in(r_0,\rho)$ if and only if $r_0+\rho>2$. Now, Proposition \ref{Pcon} yields minimality of $z(x) = w(|x|) \bne_r$, hence facet $(F,\chi)$ is calibrable. Moreover, since $\lambda + \mu =0$, the facet is $(1,\Gamma)$-coherent.
\end{proof}

\begin{remark}
 We notice that in part 1) the curvature of $\Gamma$ may be in the interval $(-\infty, -1)$. In case 3), we have calibrable and coherent facet $\bar A(r_0,\rho )$ whose curvature is in the interval $(-1,0)$, which is in accordance with Conjecture \ref{e-con}.
\end{remark}

\subsubsection{Boundary phenomena when $r_0 =\rho$}
We want to analyze the situation occurring when the initial data contains no facet and $\Omega = A(r_0,R)$ while $\Gamma = \d B(0,r_0)$ and $u_0|_{\Gamma} = v_0$. Exactly, as in the previous subsection, we do not address the question of the position of the inner radius of the facet.

We keep in mind that on the one hand
\begin{equation}\label{a6}
v_t = - [z\cdot\nu], 
\end{equation}
while on the other hand $[z\cdot\nu] \in \Sgn(v - \gamma u).$ Meanwhile, since $z$ is continuous and $z = \chi \bne_r$ at $ t = 0 $, 
  we deduce that (we write $\frac{d^+}{dt}v =v_t $),
$$
v_t  = \chi \quad \mbox{at $ t = 0 $.}
$$
We may calculate the bulk velocity, (here $\frac{d^+}{dt}u = u_t$),
\begin{equation}\label{a7}
u_t = \di z =\frac{\chi}{r} \quad \mbox{at $ t = 0 $.}
\end{equation}
Thus, we are ready to state the final result:

\begin{prop}\label{p8.4}
 Let us suppose that $\Omega = A(r_0,R)$, $\Gamma = \d B(0,r_0)$ and $u_0(x) = u_0(|x|)$, where function $r\mapsto u_0(r)$ is strictly monotone on $[r_0, R)$, $R>r_0$. We assume that no facet is contained in the data, moreover, $u_0(r_0) = v_0$. Then,\\
 1) If $r_0> 1$, then $ 0 < \frac{2R}{R^2 -r_0^2 +2r_0} < |\gamma u_t| = |v_t| \leq 1 $ on $ \Gamma $, for $t$ close to zero, so that a facet forms;\\
 2) If $r_0 = 1$, then $ \gamma u_t = v_t = \chi $ on $ \Gamma $, for $t$ close to zero, and no facet is created;\\
 3) If $r_0 <1$, then $| \gamma u_t| > |v_t| = 1 $ on $ \Gamma $, for $t$ close to zero, so that the boundary layer detaches.
\end{prop}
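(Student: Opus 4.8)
The plan is to read off the instantaneous behaviour at $t=0$ from the canonical section, exactly as in Theorem~\ref{5thm1} and the ball subsection, and then to decide, in each range of $r_0$, whether the facet-free candidate evolution is self-consistent or whether a facet must form. Since $u_0$ is strictly monotone and no facet is present, the constraint $z\in\Sgn^N(\nabla u_0)$ forces $z=\chi\bne_r$ everywhere in $\Omega$ (using the radial form from Proposition~\ref{s5.2-pro5.3}), so there is no freedom in the canonical section at $t=0$. This gives the two velocities already recorded in \eqref{a6}--\eqref{a7}: the bulk moves with $u_t=\di z=\chi/r$, whose trace on $\Gamma$ is $\chi/r_0$, while the boundary moves with $v_t=-[z\cdot\nu]=\chi$ (the inner normal being $\nu=-\bne_r$). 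The whole argument then turns on comparing $|\gamma u_t|=1/r_0$ with $|v_t|=1$, the threshold being $r_0=1$.

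For $r_0=1$ the two speeds coincide, $\gamma u_t=v_t=\chi$, so the boundary value and the adjacent bulk move together, the identity $\gamma u=v$ is preserved, and no facet is needed; this is (2). For $r_0<1$ I would verify that the facet-free evolution $u(r,t)=u_0(r)+\chi t/r$, $v(t)=v_0+\chi t$ is self-consistent. Indeed $\gamma u-v=\chi t(1/r_0-1)$ has the sign of $\chi$ for small $t>0$, so $\Sgn(\gamma u-v)=\{\Sgn\chi\}$ and the requirement $-[z\cdot\nu]\in\Sgn(\gamma u-v)$ of condition (b2) in Theorem~\ref{Prop.rep_subd} is met by $-[z\cdot\nu]=\chi$. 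Hence the boundary layer lags the bulk, $\gamma u\neq v$, and $|\gamma u_t|=1/r_0>1=|v_t|$, which is exactly the detachment asserted in (3).

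The delicate case is $r_0>1$. Here the same candidate yields $\gamma u-v=\chi t(1/r_0-1)$ of sign $-\chi$, so $\Sgn(\gamma u-v)=\{-\Sgn\chi\}$ would force $v_t=-[z\cdot\nu]=-\chi$, contradicting $v_t=\chi$; thus the facet-free evolution is inconsistent and a facet $F=\bar A(r_0,\rho)$ with $\rho>r_0$ must form instantly, in the spirit of Theorem~\ref{5thm1}(2). Once such a facet is present with $\rho$ close to $r_0$ one has $\rho+r_0>2$, so Proposition~\ref{Prop.final}(3) applies: $(F,\chi)$ is calibrable and $(1,\Gamma)$-coherent with $\lambda=\di z=2\rho\chi/(\rho^2-r_0^2+2r_0)$ and $v_t=-\mu=\lambda$, whence $\gamma u_t=v_t$. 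Finally $\rho\mapsto|\lambda(\rho)|$ is decreasing on $[r_0,R]$ (its derivative is negative since $\rho^2>r_0(2-r_0)$ there, which holds because $r_0>1$), running from $1$ at $\rho=r_0$ down to $2R/(R^2-r_0^2+2r_0)$ at $\rho=R$; this yields the two-sided bound in (1).

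The main obstacle is the rigorous justification of instantaneous facet formation for $r_0>1$, namely that for every small $t>0$ there is a genuine facet of positive width and that the field $z$ built from \eqref{a3} is the actual canonical section of $\partial E(U(t))$, together with controlling the limit $\rho\to r_0^+$ where $|\lambda|\to 1$. Both points are handled as in the explicit constructions underlying Theorem~\ref{5thm1} and Theorem~\ref{Thm8.1}: one checks directly that the candidate pair $U(t)=(u(t),\gamma u(t))$ lies in $D(\partial E)$ and satisfies $\tfrac{d^+U}{dt}=-\mathcal{A}^o(U)$ up to the first facet collision, and then appeals to uniqueness from Theorem~\ref{Thm4.4}.
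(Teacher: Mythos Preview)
Your argument is correct and follows essentially the same route as the paper: compute the instantaneous velocities $u_t=\chi/r$ and $v_t=\chi$ from the forced choice $z=\chi\bne_r$, compare $|\gamma u_t|=1/r_0$ with $|v_t|=1$, and in case (1) invoke the facet formula \eqref{a3} to obtain the two-sided bound. Your explicit consistency check of the facet-free candidate against condition (b2) of Theorem~\ref{Prop.rep_subd}, used to rule out that candidate when $r_0>1$ and to confirm it when $r_0<1$, is in fact a bit more careful than the paper's own justification, which simply asserts that a facet forms because the boundary moves faster than the bulk.
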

\begin{proof}
We have already calculated  the bulk velocity in (\ref{a7}).
We want to examine the speed of the bulk at $r=r_0$,
$$
  |\gamma u_t | = | \di z | = \frac 1 { r_0} \quad \mbox{at $ t = 0 $.}
$$
We also noticed that
$$
v_t = \chi \quad \mbox{at $ t = 0 $.}
$$
Thus, we see that the boundary layer moves faster than the bulk if and only if $r_0 >1$, at the same time $\gamma u_t v_t\ge 0$. This means that a facet is formed and for $t>0$. Furthermore, the results obtained earlier are applicable. Therefore, 
with \eqref{a3} and  $ 1 < r_0 \leq \rho < R $ in mind, we can see that
    \begin{equation*}
        \begin{array}{c}
            \ds 0 < \frac{2R}{R^2 -r_0^2 +2r_0} < \frac{2 \rho}{\rho^2 -r_0^2 +2 r_0} = |\gamma u_t| = |v_t| \leq 1, 
        \end{array}
    \end{equation*}
until the facet collision occurs.

If $r_0=1$, then we see that $ \gamma u_t =v_t = \chi $. 
As a result, the boundary layer moves with the bulk, and no new facet is created.

Finally, let us consider $r_0<1$, then 
$$
|\gamma u_t | = | \di z | = \frac 1{r_0}>1 \quad \mbox{at $ t = 0 $.}
$$
On the other hand
$$
v_t = - [z\cdot\nu] = \chi \quad \mbox{at $ t = 0 $.}
$$
    In other words, the bulk moves faster, so $v - \gamma u \neq 0$ and $\Sgn(v - \gamma u)=- \chi.$ As a result, the boundary layer detaches and no facet forms, and $ |\gamma u_t| > |v_t| = 1 $, for $t>0$ close to zero.
\end{proof}

The observations we have made above imply the energy decay.
\begin{prop}\label{p8.5}
Under the assumptions of Proposition \ref{p8.4}, the total energy decays, i.e. $t\mapsto E(u(t), v(t))$ is a decreasing function, for $t>0$ close to zero.
\end{prop}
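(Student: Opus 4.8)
The plan is to read off the monotonicity directly from the general gradient-flow structure, using the energy dissipation identity \eqref{rn2} of Theorem \ref{Thm4.4}, and then to sharpen ``non-increasing'' to ``strictly decreasing'' with the help of the explicit velocity formulas obtained in Proposition \ref{p8.4}. Since $u_0$ is Lipschitz on the bounded domain $\Omega=A(r_0,R)$, we have $u_0\in BV(\Omega)\cap L^2(\Omega)$, and as $u_0$ is radially symmetric its trace $v_0=u_0(r_0)$ is constant on $\Gamma=\d B(0,r_0)$, so that $U_0=(u_0,v_0)\in D(E)$. Hence Theorem \ref{Thm4.4} applies and yields that $t\mapsto E(U(t))$ is absolutely continuous on every compact interval, with (recall $\tau=1$ throughout this section)
\[
\frac{d}{dt}E(U(t)) = -\left\|\frac{du}{dt}(t)\right\|_{L^2(\Omega)}^2 -\left\|\frac{dv}{dt}(t)\right\|_{L^2(\Gamma)}^2 \le 0 \quad\text{a.e. } t>0.
\]

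First I would integrate this identity: by absolute continuity, for $0\le t_1\le t_2$ one has $E(U(t_2))-E(U(t_1))=\int_{t_1}^{t_2}\frac{d}{dt}E(U(t))\,dt\le 0$, which already gives that $E$ is non-increasing. To obtain strict decay for small $t>0$, I would argue that the dissipation rate on the right-hand side is in fact strictly negative near $t=0$, which amounts to checking that the canonical section $\mathcal{A}^o(U)$ does not vanish. This is exactly what Proposition \ref{p8.4} provides: on the strictly monotone part of the profile, where $\nabla u\neq 0$, we have $z=\chi\bne_r$ and therefore $u_t=\di z=\chi/r\not\equiv 0$, so that $\|du/dt\|_{L^2(\Omega)}^2>0$; moreover the boundary velocity satisfies $v_t=\chi\neq 0$ for $t$ close to zero, so $\|dv/dt\|_{L^2(\Gamma)}^2>0$ as well. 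Consequently there is $\delta>0$ with $\frac{d}{dt}E(U(t))<0$ for a.e. $t\in(0,\delta)$, and integrating gives $E(U(t_2))<E(U(t_1))$ whenever $0\le t_1<t_2<\delta$.

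There is essentially no genuine obstacle here: the statement is an immediate consequence of the abstract energy equality combined with the already-computed velocities. The only point that requires a word of care is verifying that the solution is not instantaneously stationary, i.e. that $(u_t,v_t)\not\equiv 0$ for $t$ near zero; but this follows at once from the formulas $u_t=\di z=\chi/r$ and $v_t=\chi$ recorded in the proof of Proposition \ref{p8.4}, so the strict decay holds in all three cases $r_0>1$, $r_0=1$, and $r_0<1$.
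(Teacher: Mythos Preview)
Your proof is correct and follows essentially the same route as the paper: invoke the energy identity \eqref{rn2} from Theorem \ref{Thm4.4} and then use the velocity information from Proposition \ref{p8.4} to see that the dissipation is strictly positive for small $t$. One small imprecision: the assertion ``$v_t=\chi$ for $t$ close to zero'' holds only at $t=0$ in case 1) of Proposition \ref{p8.4}, since a facet forms instantly and $|v_t|$ drops below $1$ for $t>0$; however, Proposition \ref{p8.4} still gives the uniform lower bound $|v_t|\ge \frac{2R}{R^2-r_0^2+2r_0}>0$ there (and $|v_t|=1$ in cases 2) and 3)), and in any event your independent argument via $u_t=\chi/r$ on the strictly monotone part already suffices for strict positivity of the dissipation.
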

\begin{proof}
        By Proposition \ref{p8.4}, we can find a small positive constant $ \delta_0 $ such that
        \begin{equation*}
            |v_t| \geq \frac{2R}{R^2 -r_0^2 +2 r_0} > 0 \quad \mbox{on $ [0, \delta_0] $.}
        \end{equation*}
        Additionally, from \eqref{rn2} in Theorem \ref{Thm4.4}, we can see that
        \begin{align*}
            \frac{d}{dt} E(u(t), v(t)) ~ & =  -|u_t(t)|_{L^2(\Omega)}^2 -|v_t(t)|_{L^2(\Gamma)}^2
            \\
            & \leq -2 \pi r_0 |v_t(t)|^2  < -\frac{8 \pi r_0 R^2}{(R^2 -r_0^2 +2r_0)^2}< 0 \quad \mbox{a.e. $ t \in (0, \delta_0) $.}
        \end{align*}

        \noindent
        It implies that the function $ t \mapsto E(u(t), v(t)) $ is decreasing on the time-interval $ [0, \delta_0] $.
\end{proof}


\section*{Acknowledgment}
The work of the first author, Yoshikazu Giga, was in part supported by the Japan Society for the Promotion of Science (JSPS) through the grants Kiban (S) (No. 26220702), Kiban (A) (No. 17H01091), Kiban (B) (No. 16H03948) and Challenging Research (Pioneering) (No. 18H05323).
The work of the third  author, Piotr Rybka, was in part supported by the National Science Centre, Poland, through the grant number
2017/26/M/ST1/00700.
The fourth author, Ken Shirakawa, was supported by Grant-in-Aid Kiban (C) (No. 16K05224), JSPS.

\end{document}